\newtheorem{theorem}{Theorem}[section]
\newtheorem{lemma}{Lemma}[section]
\newtheorem{remark}{Remark}[section]
\newtheorem{corollary}{Corollary}[section]
\newtheorem{proposition}{Proposition}[section]
\numberwithin{equation}{section}
\title{Variational Gaussian Approximation for Poisson Data} 
\author{Simon R. Arridge\thanks{Department of Computer Science, University College London, Gower Street, London WC1E 6BT, UK (\texttt{s.r.arridge,b.jin,chen.zhang.16@ucl.ac.uk})}\and Kazufumi Ito\thanks{Department of Mathematics, North Carolina State University, Raleigh, NC 27607, USA (\texttt{kito@ncsu.edu})} \and Bangti Jin\footnotemark[1] \and Chen Zhang\footnotemark[1]}
\date{}
\begin{document}
\maketitle
\begin{abstract}
The Poisson model is frequently employed to describe count data, but in a Bayesian context it leads to
an analytically intractable posterior probability distribution. In this work, we analyze a variational Gaussian
approximation to the posterior distribution arising from the Poisson model with a Gaussian prior. This is
achieved by seeking an optimal Gaussian distribution minimizing the Kullback-Leibler divergence from
 the posterior distribution to the approximation, or
equivalently maximizing the lower bound for the model evidence. We derive an explicit expression for
the lower bound, and show the existence and uniqueness of the optimal Gaussian approximation. The lower
bound functional can be viewed as a variant of classical Tikhonov regularization that penalizes also the
covariance. Then we develop an efficient alternating direction maximization algorithm for solving
the optimization problem, and analyze its convergence. We discuss strategies for reducing the computational
complexity via low rank structure of the forward operator and the sparsity of the covariance. Further, as an
application of the lower bound, we discuss hierarchical Bayesian modeling for selecting the
hyperparameter in the prior distribution, and propose a monotonically convergent algorithm for determining
the hyperparameter. We present extensive numerical experiments to illustrate the Gaussian approximation and the algorithms.\\
{\bf Keywords}: Poisson data, variational Gaussian approximation,  Kullback-Leibler divergence, alternating direction maximization,
hierarchical modeling
\end{abstract}

\section{Introduction}

This work is concerned with Gaussian approximations to a Poisson noise model for linear
inverse problems. The Poisson model is popular for modeling count data, where the response variable
follows a Poisson distribution with a parameter that is the exponential of a linear combination of the
unknown parameters. The model is especially suitable
for low count data, where the standard Gaussian model is inadequate. It has found many successful
practical applications, including transmission tomography \cite{YavuzFessler:1997,ErdoganFessler:1999}.

One traditional approach to parameter estimation with the Poisson model is the maximum likelihood method
or penalized variants with a convex penalty. This leads to a convex optimization problem,
whose solution is then taken as an approximation to the true solution. This approach has been extensively
studied, and we refer interested readers to the survey \cite{HohageWerner:2016} for a comprehensive
account on important developments along this line. However, this approach gives only a point estimator,
and does not allow quantifying the associated uncertainties directly. In this work, we
aim at a full Bayesian treatment of the problem, where both the point estimator (mean) and the associated
uncertainties (covariance) are of interest \cite{KaipioSomersalo:2005,Stuart:2010}. We shall focus
on the case of a Gaussian prior, which forms the basis of many other important priors, e.g., sparsity
prior via scale mixture representation. Then following the Bayesian procedure, we arrive at a posterior
probability distribution, which however is analytically intractable due to the nonstandard form of the
likelihood function for the Poisson model. We will explain this more precisely in Section \ref{sec:Poisson}.
To explore the posterior state space, instead of applying popular general-purposed sampling techniques, e.g.,
Markov chain Monte Carlo (MCMC), we employ a variational Gaussian approximation (VGA). The VGA
is one extremely popular approximate inference technique in
machine learning \cite{WainwrightJordan:2008,challis2013gaussian}. Specifically, we seek an optimal {Gaussian}
approximation to the non-Gaussian posterior distribution with respect to the Kullback-Leibler divergence.
The approach leads to a large-scale optimization problem over the mean $\mathbf{\bar x}$ and covariance $\mathbf{C}$
(of the Gaussian approximation). In practice, it generally delivers an accurate approximation in an
efficient manner, and thus has received immense attention in recent years in many different areas \cite{hinton1993keeping,barber1998ensemble,
challis2013gaussian,archambeau2007gaussian}. By its very construction, it also gives a lower bound to the
model evidence, which  facilitates its use in model selection. However, a systematic theoretical
understanding of the approach remains largely missing.

In this work, we shall study the analytical properties and develop an efficient algorithm for the VGA in the
context of Poisson data (with the log linear link function). We shall provide a detailed analysis of the
resulting optimization problem. The study sheds interesting new insights into the approach from the perspective
of regularization. Our main contributions are as follows. First, we derive explicit expressions for the
objective functional and its gradient, and establish its strict concavity and the well-posedness of the
optimization problem. Second, we develop an efficient numerical algorithm for finding the optimal Gaussian
approximation, and discuss its convergence properties. The algorithm is of alternating maximization (coordinate
ascent) nature, and it updates the mean $\mathbf{\bar x}$ and covariance $\mathbf{C}$ alternatingly by a globally
convergent Newton method and a fixed point iteration, respectively. We also discuss strategies for its efficient
implementation, by leveraging realistic structure of inverse
problems, e.g., low-rank nature of the forward map $\mathbf{A}$ and sparsity of the covariance $\mathbf{C}$,
to reduce the computational complexity. Third, we illustrate the use of the evidence lower bound for
hyperparameter selection within a hierarchical Bayesian framework, leading to a purely data-driven approach for
determining the regularization parameter, whose proper choice is notoriously challenging. We shall develop a monotonically convergent
algorithm for determining the hyperparameter in the Gaussian prior. Last, we illustrate the approach and the algorithms
with extensive numerical experiments for one- and two-dimensional examples.

Last, we discuss existing works on Poisson models. The majority of existing works aim at recovering
point estimators, either iteratively or by a variational framework \cite{HohageWerner:2016}. Recently, Bardsley and Luttman \cite{BardsleyLuttman:2016} described
a Metroplis-Hastings algorithm for exploring the posterior distribution (with rectified linear inverse link function),
where the proposal samples are drawn from the Laplace approximation (cf. Remark \ref{rmk:Laplace}). The
Poisson model \eqref{eqn:poisson} belongs to generalized linear models (GLMs), to which the VGA has been applied in
statistics and machine learning \cite{OrmerodWand:2012,KhanMohamedMurphy:2012,challis2013gaussian,RohdeWand:2016}. Ormerod and Wand
\cite{OrmerodWand:2012} suggested a variational approximation strategy for fitting GLMs suitable for grouped data.
Challis and Barber \cite{challis2013gaussian} systematically studied VGA for GLMs and various extensions.
The focus of these interesting works \cite{OrmerodWand:2012,KhanMohamedMurphy:2012,challis2013gaussian,
RohdeWand:2016} is on the development of the general VGA methodology and its applications to concrete problems, and do not study
analytical properties and computational techniques for the lower bound functional, which is the main goal
of this work.

The rest of the paper is organized as follows. In Section \ref{sec:Poisson}, we describe
the Poisson model, and formulate the posterior probability distribution. Then in Section \ref{sec:vb},
we develop the variational Gaussian approximation, and analyze its basic analytical properties.
In Section \ref{sec:algorithm}, we propose an efficient numerical algorithm for finding the
optimal Gaussian approximation, and in Section \ref{sec:hyper}, we apply the lower bound to
hyperparameter selection within a hierarchical Bayesian framework.
In Section \ref{sec:numer} we present numerical results for several examples. In two
appendices, we provide further discussions on  the convergence
of the fixed point iteration \eqref{eqn:iter-C} and the differentiability of the regularized solution.

\section{Notation and problem setting}\label{sec:Poisson}
First we recall some standard notation in linear algebra. Throughout, (real-valued) vectors and matrices are
denoted by bold lower- and upper-case letters, respectively, and the vectors are
always column vectors. We will use the notation $(\cdot,\cdot)$ to denote the usual Euclidean inner
product. We shall slightly abuse the notation $(\cdot,\cdot)$ also for the inner product for
matrices. That is, for two matrices $\mathbf{X}, \mathbf{Y}\in\mathbb{R}^{n\times m}$, we define
\begin{equation*}
  (\mathbf{X},\mathbf{Y})=\mathrm{tr}(\mathbf{XY}^t)=\mathrm{tr}(\mathbf{X}^t\mathbf{Y}),
\end{equation*}
where $\mathrm{tr}(\cdot)$ denotes taking the trace of a square matrix, and the superscript $t$ denotes the transpose
of a vector or matrix. This inner product induces the usual Frobenius norm for matrices. We shall use extensively
the cyclic property of the trace operator $\mathrm{tr}(\cdot)$: for three matrices $\mathbf{X,Y,Z}$ of appropriate size, there holds
\begin{equation*}
  \mathrm{tr}(\mathbf{XYZ})=\mathrm{tr}(\mathbf{YZX})=\mathrm{tr}(\mathbf{ZXY}).
\end{equation*}
We shall also use the notation \texttt{diag}$(\cdot)$ for a vector and a square matrix, which gives a diagonal matrix
and a column vector from the diagonals of the matrix, respectively, in the same manner as the \texttt{diag} function
in \texttt{MATLAB}. The notation $\mathbb{N}=\{0,1,\ldots\}$ denotes the set of natural numbers.
Further, the notation $\circ$ denotes the Hadamard product of two matrices or vectors. Last, we
denote by $\mathcal{S}_m^+\subset\mathbb{R}^{m\times m}$ the set of symmetric positive definite matrices in $\mathbb{R}^{m
\times m}$, $\mathbf{I}_m$ the identity matrix in $\mathbb{R}^{m\times m}$, and by $|\cdot|$ and $\|\cdot\|$ the determinant
and the spectral norm, respectively, of a square matrix. Throughout, we view exponential,
logarithm and factorial of a vector as componentwise operation.

Next we recall the finite-dimensional Poisson data model. Let $\mathbf{x}\in\mathbb{R}^m$ be the unknown
signal, $\mathbf{a}_i\in\mathbb{R}^m$, $i = 1,\dots,n$, and $\mathbf{y}\in\mathbb{N}^n\subset \mathbb{R}^n$
be the data vector. We stack the column vectors $\mathbf{a}_i$ into a matrix $\mathbf{A}$
by $\mathbf{A} = [\mathbf{a}^t_i]\in\mathbb{R}^{n\times m}$. Given the matrix $\mathbf{A}$ and data $\mathbf{y}\in\mathbb{N}^n$,
the Poisson model takes the form:
\begin{equation*}
	y_i\sim\text{Pois}(e^{(\mathbf{a}_i,\mathbf{x})}),\quad i=1,2,\ldots,n.
\end{equation*}
Thus, the likelihood function $p(y_i|\mathbf{x})$ for the data point $y_i$ is given by
\begin{equation}\label{eqn:pois}
	p(y_i|\mathbf{x}) = \frac{\lambda_i^{y_i}e^{-\lambda_i}}{y_i!}, \quad \lambda_i = e^{(\mathbf{a}_i,\mathbf{x})}, \,\, i = 1,\ldots,n.
\end{equation}
It is worth noting that the exponential function enters into the Poisson parameter $\lambda$. This is commonly known as the log link
function or log-linear model in the statistical literature \cite{CameronTrivedi:1998}. There are several other models
for the (inverse) link functions, e.g.,
rectified-linear and softplus \cite{pillow2007likelihood}, each having its own pros and cons for
modeling count data. In this work, we shall focus on the log link function. Also this model can be viewed as a simplified
statistical model for transmission tomography \cite{YavuzFessler:1997,ErdoganFessler:1999}.

The likelihood function $p(y_i|\mathbf{x})$ can be equivalently written as
\begin{equation*}
	p(y_i|\mathbf{x}) = e^{y_i(\mathbf{a}_i,\mathbf{x})-e^{(\mathbf{a}_i,\mathbf{x})}-\text{ln}(y_i!)}.
\end{equation*}
Under the independent identically distributed (i.i.d.)
assumption on the data points $y_i$, the likelihood function $p(\mathbf{y}|\mathbf{x})$ of
the data vector $\mathbf{y}$ is given by
\begin{equation}\label{eqn:poisson}
	p(\mathbf{y}|\mathbf{x}) = \prod^n_{i=1}p(y_i|\mathbf{x})
	= e^{(\mathbf{A}\mathbf{x},\mathbf{y})-(e^{\mathbf{A}\mathbf{x}},\mathbf{1}_n)-(\text{ln}(\mathbf{y}!),\mathbf{1}_n)},
\end{equation}
where $\mathbf{1}_n\in\mathbb{R}^n$ is the vector with all entries equal to unity, i.e., $\mathbf{1}_n = [1,\dots,1]^t\in\mathbb{R}^n$.
	
Further, we assume that the unknown $\mathbf{x}$ follows a Gaussian prior $p(\mathbf{x})$, i.e.,
\begin{equation*}
  p(\mathbf{x})=\mathcal{N}(\mathbf{x};\bm{\mu}_0,\mathbf{C}_0):=(2\pi)^{-\frac{m}{2}}|\mathbf{C}_0|^{-\frac{1}{2}}e^{-\frac{1}{2}(\mathbf{x}-\bm \mu_0)^t\mathbf{C}_0^{-1}(\mathbf{x}-\bm \mu_0)},
\end{equation*}
where $\bm\mu_0\in\mathbb{R}^m$ and $\mathbf{C}_0\in\mathcal{S}_m^+$ denote the mean and covariance
of the Gaussian prior, respectively, and $\mathcal{N}$ denotes the normal distribution. In the
framework of variational regularization, the
corresponding penalty $\frac{1}{2}(\mathbf{x}-\bm \mu_0)^t\mathbf{C}_0^{-1}(\mathbf{x}-\bm\mu_0)$ often
imposes certain smoothness constraint. The
Gaussian prior $p(\mathbf{x})$ may depend on additional hyperparameters, cf. Section \ref{sec:hyper} for details. Then
by Bayes' formula, the posterior probability distribution $p(\mathbf{x} |\mathbf{y})$ is given by
\begin{equation}\label{eq:post}
	p(\mathbf{x}|\mathbf{y})= Z^{-1}p(\mathbf{x},\mathbf{y}),
\end{equation}
where the joint distribution $p(\mathbf{x},\mathbf{y})$ is defined by
\begin{equation*}
  p(\mathbf{x},\mathbf{y}) = (2\pi)^{-\frac{m}{2}}|\mathbf{C}_0|^{-\frac{1}{2}} e^{(\mathbf{A}\mathbf{x},\mathbf{y})-(e^{\mathbf{A}\mathbf{x}},\mathbf{1}_n)-(\text{ln}(\mathbf{y}!),
\mathbf{1}_n)-\frac{1}{2}(\mathbf{x}-\bm{\mu}_0)^t\mathbf{C}^{-1}_0(\mathbf{x}-\bm{\mu}_0)},
\end{equation*}
and the normalizing constant $Z(\mathbf{y})$, which depends only on the given
data $\mathbf{y}$, is given by
\begin{equation*}
  Z(\mathbf{y})=p(\mathbf{y})=\int p(\mathbf{x},\mathbf{y}){\rm d}\mathbf{x}.
\end{equation*}
That is, the normalizing constant $Z$ is an integral living in a very high-dimensional space if the parameter
dimension $m$ is large. Thus it is computationally intractable, and so is the posterior distribution $p(\mathbf{x}
|\mathbf{y})$, since it also involves the constant $Z$. The quantity $Z$ is commonly known as model evidence in
the literature, and it underlies many model selection rules, e.g., Bayes factor \cite{kass1995bayes}.
Thus the reliable approximation of $Z(\mathbf{y})$ is important in certain tasks.

The posterior distribution $p(\mathbf{x}|\mathbf{y})$ given in \eqref{eq:post} is the Bayesian solution to the
Poisson model \eqref{eqn:pois} (under a Gaussian prior), and it contains all the information about the
inverse problem. In order to explore the posterior state
space, one typically employs Markov chain Monte Carlo methods, which, however, can be prohibitively expensive for
high-dimensional problems, apart from the well-known challenge in diagnosing the convergence of the Markov chain.
To overcome the challenge, over the last two decades, a large number of approximate inference methods
have been developed, including mean-field approximation \cite{WainwrightJordan:2008}, expectation propagation
\cite{minka2001expectation} and variational Gaussian approximation (VGA) \cite{OpperArchambeau:2009,challis2013gaussian}.
In all these approximations, we aim at finding a best approximate yet tractable distribution $q(\mathbf{x})$ within
a family of parametric/nonparametric probability distributions, by minimizing the error in a certain
probability metric, prominently the Kullback-Leibler divergence $\mathrm{D}_{\text{KL}}(q||p)$, cf. Section
\ref{ssec:KL} below.

In this work, we shall employ the VGA to obtain an optimal Gaussian
approximation $q(\mathbf{x})$ to the posterior
distribution $p(\mathbf{x}|\mathbf{y})$ in the Kullback-Leibler divergence ${\rm D}_{\rm KL}(q||p)$.
Fitting a Gaussian to an intractable distribution is a well-established norm for approximate Bayesian inference,
and it has demonstrated success in many practical applications
\cite{hinton1993keeping,barber1998ensemble,challis2013gaussian,archambeau2007gaussian}.
The popularity can be largely attributed to the fact that the Gaussian approximation is
 computationally attractive, and yet delivers reasonable accuracy for a wide range of problems, due to the
good analytical properties and great flexibility of the Gaussian family.
However, analytical properties of approximate inference procedures are rarely
studied. In the context of Poisson mixed models,
the asymptotic normality of the estimator and its convergence rate was analyzed \cite{hall2011theory}.
In a general setting, some theoretical issues were studied in \cite{PinskiSimpson:2015,LuStuartWeber:2016}.

\section{Gaussian variational approximation}\label{sec:vb}

In this section, we recall the Kullback-Leibler divergence, derive explicit
expressions for the lower bound functional and its gradient, and discuss  basic
analytic properties, e.g., concavity and existence.

\subsection{Kullback-Leibler divergence}\label{ssec:KL}
The Kullback-Leibler divergence is one of the most popular metrics for measuring the distance between
two probability distributions. The Kullback-Leibler (KL) divergence \cite{KullbackLeibler:1951} from
one probability distribution $p$ to another distribution $q$ is a functional defined by
\begin{equation}\label{eq:kl}
	\text{D}_{\text{KL}}(q||p) = \int q(\mathbf{x})\text{ln}\frac{q(\textbf{x})}{p(\mathbf{x})}\text{d}\mathbf{x}.
\end{equation}
	
Clearly, KL divergence is not symmetric and thus not a metric in the mathematical sense.
Since the logarithm function $\ln x$ is concave and that $q$ is
normalized, i.e., $\int q(\mathbf{x}){\rm d}\mathbf{x}=1$, by Jensen's inequality, we can
derive the nonnegativity of the KL divergence:
\begin{equation}\label{eqn:KLD-nonnegative}
\begin{aligned}
	\text{D}_{\text{KL}}(q||p) &= \int q(\mathbf{x})\text{ln}\frac{q(\textbf{x})}{p(\mathbf{x})}\text{d}\mathbf{x} = - \int q(\mathbf{x})\text{ln}\frac{p(\mathbf{x})}{q(\textbf{x})}\text{d}\mathbf{x}\\
	&\ge -\text{ln}\int q(\mathbf{x})\frac{p(\mathbf{x})}{q(\textbf{x})}\text{d}\mathbf{x} = - \text{ln}\int p(\mathbf{x})\text{d}\mathbf{x} = 0.
\end{aligned}
\end{equation}
Further, $\text{D}_{\text{KL}}(q||p) = 0$ if and only if $p=q$ almost everywhere.

Due to unsymmetry of the KL divergence, to find an approximation $q$ to the target distribution $p$, there are
two options, i.e., minimizing either $\text{D}_{\text{KL}}(q||p)$ or $\text{D}_{\text{KL}}(p||q)$.
These two options lead to different approximations. It was pointed out in \cite[Section 10.1.2]{Bishop:2006}
that minimizing $\text{D}_{\text{KL}}(p||q)$ tends to find the average of modes of
$p$, while minimizing $\text{D}_{\text{KL}}(q||p)$ tends to find one exact mode.
Traditionally, the former is used in expectation propagation, and the latter in
variational Bayes. In this work, we focus on the approach $\min\text{D}_{\text{KL}}(q||p)$,
which leads to the VGA to be described below.

\begin{remark}\label{rmk:Laplace}
In practice, the so-called Laplace approximation is quite popular \cite{ThierneyKadane:1986}. Specifically, let
$\hat{\mathbf{x}}$ be the maximum a posteriori {\rm(}MAP{\rm)} estimator $\hat{\mathbf{x}}$, i.e.,
$\hat{\mathbf{x}}=\arg\min_{\mathbf{x}\in\mathbb{R}^m} g(\mathbf{x}),$ where $g(\mathbf{x}) = -\ln p(\mathbf{x}
|\mathbf{y})$ is the negative log posterior distribution. Consider the Taylor expansion of $g(\mathbf{x})$ at
the MAP estimator $\hat{\mathbf{x}}$:
\begin{equation*}
   \begin{aligned}
	g(\mathbf{x}) & \approx g(\hat{\mathbf{x}}) + (\nabla g(\hat{\mathbf{x}}),\mathbf{x} - \hat{\mathbf{x}}) + \tfrac{1}{2}(\mathbf{x}-\hat{\mathbf{x}})^t\mathbf{H}(\mathbf{x}-\hat{\mathbf{x}})\\
    & =g(\hat{\mathbf{x}}) + \tfrac{1}{2}(\mathbf{x}-\hat{\mathbf{x}})^t\mathbf{H}(\mathbf{x}-\hat{\mathbf{x}}),
   \end{aligned}
\end{equation*}
since $\nabla g(\hat{\mathbf{x}})$ vanishes.
The Hessian $\mathbf{H}$ of $g(\mathbf{x})$ is given by
\begin{equation*}
  \mathbf{H}=\mathbf{A}^t\mathrm{diag}(e^{\mathbf{A}\mathbf{\hat x}})\mathbf{A}+\mathbf{C}^{-1}_0.
\end{equation*}
Thus, $\mathbf{\hat x}$ might serve as an approximate posterior mean, and the inverse Hessian
$\mathbf{H}^{-1}$ as an approximate posterior covariance. However, unlike the VGA discussed below, it lacks the optimality as
evidence lower bound {\rm(}within the Gaussian family{\rm)}, and thus may be suboptimal for model selection etc.
\end{remark}
	
\subsection{Variational Gaussian lower bound}
Now we derive the variational Gaussian lower bound.
By substituting $p(\mathbf{x})$ with the posterior distribution $p(\mathbf{x}|\mathbf{y})$ in \eqref{eq:kl}, we obtain
\begin{equation*}
	\text{D}_{\text{KL}}(q(\mathbf{x})||p(\mathbf{x}|\mathbf{y})) = \int q(\mathbf{x})\text{ln}\frac{q(\textbf{x})}{p(\mathbf{x}|\mathbf{y})}\text{d}\mathbf{x}.
\end{equation*}
Since the posterior distribution $p(\mathbf{x}|\mathbf{y})$ depends on the unknown normalizing constant $Z(\mathbf{y})$,
the integral on the right hand side is not computable. Nonetheless,
given $\mathbf{y}$, $Z(\mathbf{y})$ is fixed. In view of the identity
\begin{equation*}
\begin{aligned}
	\text{ln}Z & 
	= \int q(\mathbf{x})\text{ln}\frac{p(\mathbf{x},\mathbf{y})}{q(\mathbf{x})}\text{d}\mathbf{x} + \int q(\mathbf{x})\text{ln}\frac{q(\mathbf{x})}{p(\mathbf{x}|\mathbf{y})}\text{d}\mathbf{x},
	\end{aligned}
\end{equation*}
instead of minimizing $\text{D}_{\text{KL}}(q(\mathbf{x})||p(\mathbf{x}|\mathbf{y}))$, we may equivalently maximize the functional
\begin{equation}\label{eq:lb}
	F(q,\mathbf{y})=\int q(\mathbf{x})\text{ln}\frac{p(\mathbf{x},\mathbf{y})}{q(\mathbf{x})}\text{d}\mathbf{x}.
\end{equation}

By \eqref{eqn:KLD-nonnegative}, we have $\text{D}_{\text{KL}}(q(\mathbf{x})||p(\mathbf{x}|\mathbf{y}))\ge0$, and thus
$ \text{ln}Z \ge F(q,\mathbf{y})$. That is, $F(q,\mathbf{y})$ provides a lower bound on the model evidence $Z$, for any choice of the
distribution $q$. For any fixed $q$, $F(q,\mathbf{y})$ may be used as a substitute for the analytically intractable model
evidence $Z(\mathbf{y})$, and hence it is called an evidence lower bound (ELBO). Since the data $\mathbf{y}$ is fixed,
it will be suppressed from $F(q,\mathbf{y})$ below. In the VGA, we
restrict our choice of $q$ to Gaussian distributions. Meanwhile, a Gaussian distribution $q(\mathbf{x})$ is fully
characterized by its mean ${\mathbf{\bar x}}\in \mathbb{R}^m$ and covariance $\mathbf{C}\in\mathcal{S}^+_m\subset \mathbb{R}^{m\times m}$, i.e.,
\begin{equation*}
q(\mathbf{x}) = \mathcal{N}(\mathbf{x};
\mathbf{\bar{x}},\mathbf{C}).
\end{equation*}
Thus, $F(q)$ is actually a function of $\mathbf{\bar{x}}\in\mathbb{R}^m$ and $\mathbf{C}\in \mathcal{S}_m^+$, and will
be written as $F(\mathbf{\bar{x}},\mathbf{C})$ below. Then the approach seeks optimal variational parameters
$(\mathbf{\bar x},\mathbf{C})$ to maximize ELBO. This step turns a challenging
sampling problem into a computationally more tractable optimization problem.
	
The next result gives an explicit expression for the lower bound $F(\mathbf{\bar{x}},\mathbf{C})$.
\begin{proposition}\label{prop:lb}
For any fixed $\mathbf{y}, \bm{\mu}_0$ and $\mathbf{C}_0$, the lower bound $F(\mathbf{\bar x},\mathbf{C})$ is given by
\begin{equation}\label{eq:lbxc}
	\begin{split}
	F(\mathbf{\bar{x}},\mathbf{C}) &= (\mathbf{y},\mathbf{A}\mathbf{\bar{x}})- (\mathbf{1}_n,e^{\mathbf{A}\mathbf{\bar{x}}+\frac{1}{2}\mathrm{diag}(\mathbf{A}\mathbf{C}\mathbf{A}^t)}) - \tfrac{1}{2}(\mathbf{\bar{x}}-\bm{\mu}_0)^t\mathbf{C}^{-1}_0(\mathbf{\bar{x}}-\bm{\mu}_0) - \tfrac{1}{2}\mathrm{tr}(\mathbf{C}^{-1}_0\mathbf{C})\\ &\hspace{1em}+ \tfrac{1}{2}\ln|\mathbf{C}| - \tfrac{1}{2}\ln|\mathbf{C_0}| + \tfrac{m}{2} - (\mathbf{1}_n,\ln(\mathbf{y}!)).
	\end{split}
\end{equation}
\end{proposition}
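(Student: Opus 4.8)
The plan is to compute the integral \eqref{eq:lb} defining $F$ directly, by splitting it as
$F(\mathbf{\bar x},\mathbf{C})=\int q(\mathbf{x})\ln p(\mathbf{x},\mathbf{y})\,\mathrm{d}\mathbf{x}-\int q(\mathbf{x})\ln q(\mathbf{x})\,\mathrm{d}\mathbf{x}$ and evaluating each piece as an expectation against $q=\mathcal{N}(\mathbf{x};\mathbf{\bar x},\mathbf{C})$, term by term. The second piece is nothing but the differential entropy of $q$, for which I would invoke the standard Gaussian formula $-\int q\ln q\,\mathrm{d}\mathbf{x}=\tfrac{m}{2}\ln(2\pi)+\tfrac{m}{2}+\tfrac{1}{2}\ln|\mathbf{C}|$; this alone supplies the $\tfrac{1}{2}\ln|\mathbf{C}|$ and $\tfrac{m}{2}$ terms of \eqref{eq:lbxc}.

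For the first piece I would substitute the explicit form of $\ln p(\mathbf{x},\mathbf{y})$ read off from the joint distribution, namely $-\tfrac{m}{2}\ln(2\pi)-\tfrac{1}{2}\ln|\mathbf{C}_0|+(\mathbf{A}\mathbf{x},\mathbf{y})-(e^{\mathbf{A}\mathbf{x}},\mathbf{1}_n)-(\ln(\mathbf{y}!),\mathbf{1}_n)-\tfrac{1}{2}(\mathbf{x}-\bm{\mu}_0)^t\mathbf{C}_0^{-1}(\mathbf{x}-\bm{\mu}_0)$, and integrate against $q$ one summand at a time. The constants and the data term $(\ln(\mathbf{y}!),\mathbf{1}_n)$ pass through untouched; linearity together with $\mathbb{E}_q[\mathbf{x}]=\mathbf{\bar x}$ gives $\mathbb{E}_q[(\mathbf{A}\mathbf{x},\mathbf{y})]=(\mathbf{y},\mathbf{A}\mathbf{\bar x})$; and the quadratic prior term is handled by the identity $\mathbb{E}_q[(\mathbf{x}-\bm{\mu}_0)^t\mathbf{C}_0^{-1}(\mathbf{x}-\bm{\mu}_0)]=(\mathbf{\bar x}-\bm{\mu}_0)^t\mathbf{C}_0^{-1}(\mathbf{\bar x}-\bm{\mu}_0)+\mathrm{tr}(\mathbf{C}_0^{-1}\mathbf{C})$, which I would obtain by writing $\mathbf{x}-\bm{\mu}_0=(\mathbf{x}-\mathbf{\bar x})+(\mathbf{\bar x}-\bm{\mu}_0)$, noting that the cross term is mean-zero, and using $\mathbb{E}_q[(\mathbf{x}-\mathbf{\bar x})(\mathbf{x}-\mathbf{\bar x})^t]=\mathbf{C}$ with the cyclic property of the trace. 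This accounts for the $-\tfrac{1}{2}(\mathbf{\bar x}-\bm{\mu}_0)^t\mathbf{C}_0^{-1}(\mathbf{\bar x}-\bm{\mu}_0)$, $-\tfrac{1}{2}\mathrm{tr}(\mathbf{C}_0^{-1}\mathbf{C})$ and $-\tfrac{1}{2}\ln|\mathbf{C}_0|$ terms.

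The only genuinely nonroutine step is the exponential term $\mathbb{E}_q[(e^{\mathbf{A}\mathbf{x}},\mathbf{1}_n)]=\sum_{i=1}^{n}\mathbb{E}_q[e^{(\mathbf{a}_i,\mathbf{x})}]$. Here I would use that each linear functional $(\mathbf{a}_i,\mathbf{x})$ is, under $q$, a scalar Gaussian with mean $(\mathbf{a}_i,\mathbf{\bar x})$ and variance $\mathbf{a}_i^t\mathbf{C}\mathbf{a}_i$, and then apply the moment generating function of a univariate normal, $\mathbb{E}[e^{Z}]=e^{\mu+\frac{1}{2}\sigma^2}$ for $Z\sim\mathcal{N}(\mu,\sigma^2)$. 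Recognizing that $\mathbf{a}_i^t\mathbf{C}\mathbf{a}_i=(\mathbf{A}\mathbf{C}\mathbf{A}^t)_{ii}$, the sum collapses precisely to $(\mathbf{1}_n,e^{\mathbf{A}\mathbf{\bar x}+\frac{1}{2}\mathrm{diag}(\mathbf{A}\mathbf{C}\mathbf{A}^t)})$, which is the origin of the covariance dependence in the data-fidelity term. Finally I would add the two pieces; the $\pm\tfrac{m}{2}\ln(2\pi)$ contributions cancel, and collecting the remaining terms yields \eqref{eq:lbxc}. I expect the exponential moment-generating-function step to be the crux, since it is what couples $\mathbf{C}$ into the fidelity term, whereas the remaining terms are routine bookkeeping with standard Gaussian moment identities.
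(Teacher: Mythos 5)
Your proposal is correct and follows essentially the same route as the paper: both expand $\ln\frac{p(\mathbf{x},\mathbf{y})}{q(\mathbf{x})}$ and integrate termwise against $q$, using the univariate Gaussian moment generating function for the exponential term and the standard second-moment identity for the quadratic prior term. The only cosmetic differences are that you package the $-\mathbb{E}_q[\ln q]$ contribution as the Gaussian entropy formula while the paper computes $\mathbb{E}_q[(\mathbf{x}-\mathbf{\bar x})^t\mathbf{C}^{-1}(\mathbf{x}-\mathbf{\bar x})]=m$ directly, and that the paper derives the quadratic identity via a Cholesky substitution rather than your centering argument.
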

\begin{proof}
By the definition of the functional $F(\mathbf{\bar x},\mathbf{C})$ and the joint distribution
$p(\mathbf{x},\mathbf{y})$, we have
\begin{equation*}
	\begin{aligned}
		F(\bar{\mathbf{x}},\mathbf{C})
&=\int\mathcal{N}(\mathbf{x};\mathbf{\bar{x}},\mathbf{C})\Big[\text{ln}|\mathbf{C}_0|^{-\frac{1}{2}}-\text{ln}|\mathbf{C}|^{-\frac{1}{2}}+(\mathbf{A}\mathbf{x},\mathbf{y})-(e^{\mathbf{A}\mathbf{x}},\mathbf{1}_n)-(\text{ln}(\mathbf{y}!),\mathbf{1}_n)\\	 &\hspace{1em}-\tfrac{1}{2}(\mathbf{x}-\bm{\mu}_0)^t\mathbf{C}^{-1}_0(\mathbf{x}-\bm{\mu}_0)+\tfrac{1}{2}(\mathbf{x}-\mathbf{\bar{x}})^t\mathbf{C}^{-1}(\mathbf{x}-\mathbf{\bar{x}})\Big]\text{d}\mathbf{x}.
\end{aligned}
\end{equation*}
It suffices to evaluate the integrals termwise. Clearly, we have $
\int\mathcal{N}(\mathbf{x};\mathbf{\bar{x}},\mathbf{C})(\mathbf{A}\mathbf{x},\mathbf{y})
\text{d}\mathbf{x} = (\mathbf{A}\mathbf{\bar{x}},\mathbf{y}).$
Next, using moment generating function, we have
\begin{equation*}
   \begin{aligned}
	 \int\mathcal{N}(\mathbf{x};\mathbf{\bar{x}},\mathbf{C})(e^{\mathbf{A}\mathbf{x}},\mathbf{1}_n)\text{d}\mathbf{x}
	 &= \sum_i\int\mathcal{N}(\mathbf{x};\mathbf{\bar{x}},\mathbf{C}) e^{(\mathbf{a}_i,\mathbf{x})}\text{d}\mathbf{x}\\
	 &= \sum_ie^{(\mathbf{a}_i,\mathbf{\bar{x}})+\frac{1}{2}\mathbf{a}^t_i\mathbf{C}\mathbf{a}_i}
	 = (\mathbf{1}_n,e^{\mathbf{A}\mathbf{\bar{x}}+\frac{1}{2}\text{diag}(\mathbf{A}\mathbf{C}\mathbf{A}^t)}).
   \end{aligned}
\end{equation*}
With the Cholesky decomposition $\mathbf{C}=\mathbf{L}\mathbf{L}^t$, for $\mathbf{z}\sim \mathcal{N}(\bm{0},
\mathbf{I}_m)$, $\mathbf{x}=\bm{\mu}+\mathbf{L}\mathbf{z}\sim\mathcal{N}(\mathbf{x};\bm{\mu},\mathbf{C})$.
This and the bias-variance decomposition yield ($\mathbb{E}_{q(\mathbf{x})}[\cdot]$ takes expectation with respect to the density $q(\mathbf{x})$)
\begin{equation*}
		\mathbb{E}_{q(\mathbf{x})}[\mathbf{x}^{t}\mathbf{A}\mathbf{x}] = \mathbb{E}_{\mathcal{N}(\mathbf{z};\mathbf{0},\mathbf{I}_m)}[(\bm{\mu}+\mathbf{L}\mathbf{z})^t\mathbf{A}(\bm{\mu}+\mathbf{L}\mathbf{z})]
  = \bm{\mu}^t\mathbf{A}\bm{\mu} + \mathbf{E}_{\mathcal{N}(\mathbf{z};\mathbf{0},\mathbf{I}_m)}[\mathbf{z}^t\mathbf{L}^t\mathbf{A}\mathbf{L}\mathbf{z}].
\end{equation*}
By the cyclic property of trace, we have
$
  \mathbb{E}_{\mathcal{N}(\mathbf{z};\mathbf{0},\mathbf{I}_m)}[\mathbf{z}^t\mathbf{L}^t\mathbf{A}\mathbf{L}\mathbf{z}]= \text{tr}(\mathbf{L}^t\mathbf{A}\mathbf{L})=\text{tr}(\mathbf{A}\mathbf{L}\mathbf{L}^t)
		= \text{tr}(\mathbf{A}\mathbf{C}).
$
In particular, this gives
\begin{equation*}
  \begin{aligned}
	\mathbb{E}_{q(\mathbf{x})}[(\mathbf{x}-\bm{\mu}_0)^t\mathbf{C}^{-1}_0(\mathbf{x}-\bm{\mu}_0)]
   = (\mathbf{\bar{x}}-\bm{\mu}_0)^t\mathbf{C}^{-1}_0(\mathbf{\bar{x}}-\bm{\mu}_0) +\text{tr}(\mathbf{C}^{-1}_0\mathbf{C}),
  \end{aligned}
\end{equation*}
and
\begin{equation*}
\mathbb{E}_{q(\mathbf{x})}[(\mathbf{x}-\mathbf{\bar{x}})^t\mathbf{C}^{-1}
(\mathbf{x}-\mathbf{\bar{x}})] = m.
\end{equation*}
Collecting preceding identities completes the proof of the proposition.
\end{proof}

\begin{remark}\label{rmk:functional}
The terms in the functional $F(\mathbf{\bar{x}},\mathbf{C})$ in \eqref{eq:lbxc} admit interesting interpretation in
the lens of classical Tikhonov regularization {\rm(}see, e.g., \cite{EnglHankeNeubauer:1996,ito2014inverse,SchusterKaltenbacher:2012}{\rm)}.
To this end, it is instructive to rewrite it as
\begin{equation*}
	\begin{split}
	F(\mathbf{\bar{x}},\mathbf{C}) =& (\mathbf{y},\mathbf{A}\mathbf{\bar{x}})- (\mathbf{1}_n,e^{\mathbf{A}\mathbf{\bar{x}}+\frac{1}{2}\mathrm{diag}(\mathbf{A}\mathbf{C}\mathbf{A}^t)})  - (\mathbf{1}_n,\ln(\mathbf{y}!))\\
    & - \tfrac{1}{2}(\mathbf{\bar{x}}-\bm{\mu}_0)^t\mathbf{C}^{-1}_0(\mathbf{\bar{x}}-\bm{\mu}_0) \\
    & - \tfrac{1}{2}\mathrm{tr}(\mathbf{C}^{-1}_0\mathbf{C})+ \tfrac{1}{2}\ln|\mathbf{C}| - \tfrac{1}{2}\ln|\mathbf{C_0}| + \tfrac{m}{2}.
	\end{split}
\end{equation*}
The first line represents the fidelity or ``pseudo-likelihood'' function. It is worth noting that it actually involves
the covariance $\mathbf{C}$. In the absence of the covariance $\mathbf{C}$, it recovers the familiar log likelihood for
Poisson data, cf. Remark \ref{rmk:Laplace}.  The second line imposes a quadratic penalty on the mean $\mathbf{\bar x}$.
This term recovers the familiar penalty in Tikhonov regularization {\rm(}except that it is imposed on $\mathbf{\bar x}${\rm)}.
Recall that the function $-\ln|\mathbf{C}|$ is strictly convex in $\mathbf{C}\in\mathcal{S}_m^+$
\cite[Lemma 6.2.2]{gartner2012approximation}. Thus, one may define the corresponding Bregman divergence $d(\mathbf{C},
\mathbf{C}_0)$. In view of the identities \cite{Dwyer:1967}
\begin{equation}\label{eqn:deriv-logdet}
  \frac{\partial}{\partial\mathbf{C}}\mathrm{tr}(\mathbf{CC}_0^{-1})=\mathbf{C}_0^{-1}\quad \mbox{and} \quad \frac{\partial}{\partial\mathbf{C}}\ln|\mathbf{C}|=\mathbf{C}^{-1}.
\end{equation}
simple computation gives the following expression for the divergence:
\begin{equation*}
 d(\mathbf{C},\mathbf{C}_0)= \mathrm{tr}(\mathbf{C}^{-1}_0\mathbf{C})- \ln|\mathbf{C}_0^{-1}\mathbf{C}| - m \geq 0.
\end{equation*}
Statistically, it is the Kullback-Leibler divergence between two Gaussians
of identical mean. The divergence $d(\mathbf{C},\mathbf{C}_0)$ provides a distance measure between the prior covariance $\mathbf{C}_0$ and
the posterior one $\mathbf{C}$. Let $\{(\lambda_i,\mathbf{v}_i)\}_{i=1}^m$ be the pairs of generalized eigenvalues and eigenfunctions of
the pencil $(\mathbf{C},\mathbf{C}_0)$, i.e., $\mathbf{C}\mathbf{v}_i=\lambda_i\mathbf{C}_0\mathbf{v}_i$. Then it can be expressed as
\begin{equation*}
  d(\mathbf{C},\mathbf{C}_0)= \sum_{i=1}^m (\lambda_i-\ln \lambda_i-1).
\end{equation*}
This identity directly indicates that $d(\mathbf{C},\mathbf{C}_0)\leq c$ implies $\|\mathbf{C}\|\leq c$
and $\|\mathbf{C}^{-1}\|\leq c$, where here and below $c$ denotes a generic constant which
may change at each occurrence.

Thus, the third line regularizes the posterior covariance $\mathbf{C}$ by requesting nearness to the prior one
$\mathbf{C}_0$ in Bregman divergence. It is interesting to observe that the Gaussian prior implicitly induces
a penalty on $\mathbf{C}$, although it is not directly enforced. In statistics, the Bregman divergence $d(\mathbf{C},
\mathbf{C}_0)$ is also known as Stein's loss \cite{JamesStein:1961}. In recent years, the Bregman divergence
$d(\mathbf{C},\mathbf{C}_0)$ has been employed in clustering, graphical models, sparse covariance estimate
and low-rank matrix recovery etc. \cite{KulisSustik:2009,RavikumarWainwrightPaskuttiYu:2011}.
\end{remark}

\subsection{Theoretical properties of the lower bound}
Now we study  basic analytical properties, i.e., concavity, existence and uniqueness of maximizer, and
gradient of the functional $F(\mathbf{\bar x},\mathbf{C})$ defined in \eqref{eq:lbxc}, from the perspective
of optimization.

A first result shows the concavity of $F(\mathbf{\bar x},\mathbf{C})$. Let $X$ and $Y$
be two convex sets. Recall that a functional $f: X\times Y\to\mathbb{R}$ is said to be jointly concave, if and only if
\begin{equation*}
	f(\lambda x_1+(1-\lambda)x_2,\lambda y_1+(1-\lambda)y_2) \ge \lambda f(x_1,y_1) + (1-\lambda)f(x_2,y_2)
\end{equation*}
for all $x_1,x_2\in X$, $y_1,y_2\in Y$ and $\lambda\in [0,1]$. Further, $f$ is called strictly jointly concave if
the inequality is strict for any $(x_1,y_1)\neq(x_1,y_1)$ and $\lambda\in (0,1)$. It is easy to see that
$\mathcal{S}_m^+$ is a convex set.
\begin{theorem}\label{thm:concavity}
For any $\mathbf{C}_0\in\mathcal{S}^+_m$, the functional $F(\mathbf{\bar{x}},\mathbf{C})$ is strictly
jointly concave with respect to $\mathbf{\bar{x}}\in\mathbb{R}^m$ and $\mathbf{C}\in\mathcal{S}^+_m$.
\end{theorem}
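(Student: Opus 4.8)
The plan is to establish strict joint concavity by decomposing the functional $F(\mathbf{\bar{x}},\mathbf{C})$ in \eqref{eq:lbxc} term by term and classifying each piece as affine, strictly concave in $\mathbf{\bar{x}}$ alone, strictly concave in $\mathbf{C}$ alone, or jointly concave. First I would dispose of the easy pieces: the fidelity term $(\mathbf{y},\mathbf{A}\mathbf{\bar{x}})$, the trace term $-\tfrac{1}{2}\mathrm{tr}(\mathbf{C}^{-1}_0\mathbf{C})$, and the additive constants $-\tfrac{1}{2}\ln|\mathbf{C}_0|+\tfrac{m}{2}-(\mathbf{1}_n,\ln(\mathbf{y}!))$ are all (jointly) affine in $(\mathbf{\bar{x}},\mathbf{C})$, hence concave, though not strictly.

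Next I would extract the two sources of strictness. The quadratic penalty $-\tfrac{1}{2}(\mathbf{\bar{x}}-\bm{\mu}_0)^t\mathbf{C}^{-1}_0(\mathbf{\bar{x}}-\bm{\mu}_0)$ depends only on $\mathbf{\bar{x}}$, and since $\mathbf{C}_0\in\mathcal{S}^+_m$ forces $\mathbf{C}_0^{-1}\in\mathcal{S}^+_m$, its Hessian in $\mathbf{\bar{x}}$ is $-\mathbf{C}_0^{-1}\prec 0$, so it is strictly concave in $\mathbf{\bar{x}}$. Symmetrically, $\tfrac{1}{2}\ln|\mathbf{C}|$ depends only on $\mathbf{C}$ and is strictly concave on $\mathcal{S}^+_m$; this is exactly the strict convexity of $-\ln|\mathbf{C}|$ recorded in the discussion following \eqref{eq:lbxc} via \cite[Lemma 6.2.2]{gartner2012approximation}. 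These two terms supply strictness in the $\mathbf{\bar{x}}$ and $\mathbf{C}$ directions, respectively.

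The one genuinely coupled term is the exponential $-(\mathbf{1}_n,e^{\mathbf{A}\mathbf{\bar{x}}+\frac{1}{2}\mathrm{diag}(\mathbf{A}\mathbf{C}\mathbf{A}^t)})=-\sum_{i=1}^n\exp\big((\mathbf{a}_i,\mathbf{\bar{x}})+\tfrac{1}{2}\mathbf{a}_i^t\mathbf{C}\mathbf{a}_i\big)$, and establishing its joint concavity is the crux. The key observation is that the exponent $(\mathbf{a}_i,\mathbf{\bar{x}})+\tfrac{1}{2}\mathbf{a}_i^t\mathbf{C}\mathbf{a}_i$ is a jointly \emph{affine} (in fact linear) function of the pair $(\mathbf{\bar{x}},\mathbf{C})$, being linear in $\mathbf{\bar{x}}$ and linear in the entries of $\mathbf{C}$. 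Since $t\mapsto e^t$ is convex, its composition with an affine map is convex, so each summand is jointly convex and the negated sum is jointly concave. I expect this affineness check to be the only delicate point, precisely because it means no monotonicity hypothesis on $\exp$ is needed to preserve convexity under the composition.

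Finally I would assemble the pieces. Writing $F=\ell+p(\mathbf{\bar{x}})+q(\mathbf{C})+e(\mathbf{\bar{x}},\mathbf{C})$ with $\ell$ affine, $p$ strictly concave in $\mathbf{\bar{x}}$, $q$ strictly concave in $\mathbf{C}$, and $e$ jointly concave, strict joint concavity follows by a short case analysis. Given distinct $(\mathbf{\bar{x}}_1,\mathbf{C}_1)\neq(\mathbf{\bar{x}}_2,\mathbf{C}_2)$ and $\lambda\in(0,1)$, either $\mathbf{\bar{x}}_1\neq\mathbf{\bar{x}}_2$, in which case $p$ yields a strict inequality while the remaining terms obey their weak concavity inequalities, or $\mathbf{\bar{x}}_1=\mathbf{\bar{x}}_2$ but $\mathbf{C}_1\neq\mathbf{C}_2$, in which case $q$ yields the strict inequality (and $p$, $\ell$ contribute equalities since $\mathbf{\bar{x}}$ agrees). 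In either case the sum is strict, which is the claim.
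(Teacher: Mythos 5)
Your proposal is correct and follows essentially the same route as the paper: the same term-by-term decomposition, the same appeal to convexity of $e^t$ composed with an affine map of $(\mathbf{\bar{x}},\mathbf{C})$, and the same two sources of strictness from the quadratic penalty and $\ln|\mathbf{C}|$. Your closing case analysis is in fact a slightly more careful justification of the final step than the paper's one-line remark that strict concavity is preserved under summation, since the strictly concave pieces each depend on only one of the two variables.
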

\begin{proof}
It suffices to consider the terms apart from the linear terms $(\mathbf{y},\mathbf{A}\mathbf{\bar{x}})$ and
 $- \frac{1}{2}\text{tr}(\mathbf{C}^{-1}_0\mathbf{C})$ and the constant term $-\frac{1}{2}\text{ln}|\mathbf{C_0}|
+ \frac{m}{2} - (\mathbf{1}_n,\text{ln}(\mathbf{y}!))$. Since $\mathbf{A}\mathbf{\bar{x}}+\frac{1}{2}
\text{diag}(\mathbf{A}\mathbf{C}\mathbf{A}^t)$ is linear in $\mathbf{\bar x}$ and $\mathbf{C}$, and exponentiation
preserves convexity, the term $- (\mathbf{1}_n,e^{\mathbf{A}\mathbf{\bar{x}}+\frac{1}{2}\text{diag}(\mathbf{A}
\mathbf{C}\mathbf{A}^t)})$ is also jointly concave.
Next, the term $-\frac{1}{2}(\mathbf{\bar{x}}-\bm{\mu}_0)^t\mathbf{C}^{-1}_0(\mathbf{\bar{x}}-\bm{\mu}_0)$ is
strictly concave for any $\mathbf{C}_0\in\mathcal{S}_m^+$. Last, the log-determinant $\ln|\mathbf{C}|$ is strictly concave over
 $\mathcal{S}_m^+$ \cite[Lemma 6.2.2]{gartner2012approximation}. The assertion follows since strict concavity is preserved
under summation.
\end{proof}
	
Next, we show the well-posedness of the optimization problem in VGA.
\begin{theorem}\label{thm:existence}
There exists a unique pair
of $(\mathbf{\bar{x}},\mathbf{C})$ solving the optimization problem
\begin{equation}\label{eqn:opt}
	\max_{\mathbf{\bar x}\in\mathbb{R}^m,\mathbf{C}\in \mathcal{S}_m^+}F(\mathbf{\bar{x}},\mathbf{C})
\end{equation}
	\end{theorem}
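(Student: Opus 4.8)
The plan is to obtain uniqueness immediately from the strict joint concavity established in Theorem~\ref{thm:concavity}, and to obtain existence by a direct-method (compactness) argument resting on coercivity of $F$. The one genuine difficulty is that the feasible set $\mathbb{R}^m\times\mathcal{S}_m^+$ is open and unbounded, so I must rule out maximizing sequences escaping to infinity in $\mathbf{\bar x}$, as well as sequences whose covariance degenerates toward the boundary of $\mathcal{S}_m^+$ (an eigenvalue tending to $0$) or blows up (an eigenvalue tending to $+\infty$).

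First I would rewrite $F$ in the form suggested by Remark~\ref{rmk:functional}, collecting the covariance terms into $-\tfrac{1}{2}d(\mathbf{C},\mathbf{C}_0)$ with the Bregman divergence $d(\mathbf{C},\mathbf{C}_0)=\mathrm{tr}(\mathbf{C}_0^{-1}\mathbf{C})-\ln|\mathbf{C}_0^{-1}\mathbf{C}|-m\ge 0$. Dropping the manifestly nonpositive exponential term $-(\mathbf{1}_n,e^{\mathbf{A}\mathbf{\bar x}+\frac{1}{2}\mathrm{diag}(\mathbf{A}\mathbf{C}\mathbf{A}^t)})$ yields the upper bound
\[
 F(\mathbf{\bar x},\mathbf{C})\le (\mathbf{y},\mathbf{A}\mathbf{\bar x})-\tfrac{1}{2}(\mathbf{\bar x}-\bm{\mu}_0)^t\mathbf{C}_0^{-1}(\mathbf{\bar x}-\bm{\mu}_0)-\tfrac{1}{2}d(\mathbf{C},\mathbf{C}_0)+c.
\]
Since $\mathbf{C}_0^{-1}\in\mathcal{S}_m^+$, completing the square shows the $\mathbf{\bar x}$-dependent part is bounded above and tends to $-\infty$ as $\|\mathbf{\bar x}\|\to\infty$; since $d(\mathbf{C},\mathbf{C}_0)\ge 0$, the $\mathbf{C}$-dependent part is nonpositive. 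Hence $F$ is bounded above and the supremum $M:=\sup F$ is finite.

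Next comes the coercivity argument applied to a maximizing sequence $(\mathbf{\bar x}_k,\mathbf{C}_k)$ with $F(\mathbf{\bar x}_k,\mathbf{C}_k)\to M$. Since $-\tfrac{1}{2}d(\mathbf{C}_k,\mathbf{C}_0)\le 0$ and the exponential term is nonpositive, the bound $F(\mathbf{\bar x}_k,\mathbf{C}_k)\ge M-1$ forces the quadratic-minus-linear expression in $\mathbf{\bar x}_k$ to stay bounded below, which confines $\{\mathbf{\bar x}_k\}$ to a bounded set. Bounding the same expression above by its finite maximum then yields $d(\mathbf{C}_k,\mathbf{C}_0)\le c$ uniformly in $k$, and here I would invoke the crucial consequence recorded in Remark~\ref{rmk:functional}: $d(\mathbf{C},\mathbf{C}_0)\le c$ implies $\|\mathbf{C}\|\le c$ and $\|\mathbf{C}^{-1}\|\le c$. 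Thus the eigenvalues of $\mathbf{C}_k$ are confined to a compact interval of $(0,\infty)$, so $\{\mathbf{C}_k\}$ lies in a compact subset of $\mathcal{S}_m^+$. This simultaneous Bregman-divergence control of $\mathbf{C}$ and $\mathbf{C}^{-1}$ is exactly the step I expect to be the crux, as it is what prevents both degeneration and blow-up of the covariance at once.

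Finally, by the Bolzano--Weierstrass theorem I would extract a subsequence $(\mathbf{\bar x}_{k_j},\mathbf{C}_{k_j})\to(\mathbf{\bar x}^*,\mathbf{C}^*)$ with $\mathbf{\bar x}^*\in\mathbb{R}^m$ and $\mathbf{C}^*\in\mathcal{S}_m^+$, the positive definiteness of the limit being guaranteed by the uniform lower bound on the eigenvalues. Since every term of $F$ is continuous on $\mathbb{R}^m\times\mathcal{S}_m^+$, passing to the limit gives $F(\mathbf{\bar x}^*,\mathbf{C}^*)=\lim_j F(\mathbf{\bar x}_{k_j},\mathbf{C}_{k_j})=M$, so a maximizer exists. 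Uniqueness is then immediate from Theorem~\ref{thm:concavity}: were there two distinct maximizers, strict joint concavity would make $F$ strictly exceed $M$ at their midpoint, a contradiction.
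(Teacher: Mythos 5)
Your proposal is correct and follows essentially the same route as the paper: a direct-method argument in which a maximizing sequence is controlled via the quadratic term in $\mathbf{\bar x}$ and the Bregman divergence $d(\mathbf{C},\mathbf{C}_0)$ (whose boundedness controls both $\|\mathbf{C}\|$ and $\|\mathbf{C}^{-1}\|$, exactly the point the paper draws from Remark \ref{rmk:functional}), followed by extraction of a convergent subsequence, continuity of $F$, and uniqueness from Theorem \ref{thm:concavity}. The only cosmetic difference is that you complete the square to absorb the linear term $(\mathbf{y},\mathbf{A}\mathbf{\bar x})$, while the paper invokes the Cauchy--Schwarz inequality for the same purpose.
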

\begin{proof}
The proof follows by direct methods in calculus of variation, and we only briefly sketch it. Clearly,
there exists a maximizing sequence, denoted by $\{(\mathbf{\bar x}^k,\mathbf{C}^k)\}
\subset\mathbb{R}^m\times\mathcal{S}_m^+$, and
we may assume $F(\mathbf{\bar x}^k,\mathbf{C}^k) \geq c=:F(\bm\mu_0,\mathbf{C}_0)$. Thus, by
\eqref{eq:lbxc} in Proposition \ref{prop:lb} and the divergence $d(\mathbf{C},\mathbf{C}_0)$, we have
\begin{equation*}
  (\mathbf{A}\mathbf{\bar x}^k,\mathbf{y}) - (\mathbf{\bar x}^k-\bm\mu_0)^t\mathbf{C}_0^{-1}(\mathbf{\bar x}^k-\bm\mu_0) - d(\mathbf{C}^k,\mathbf{C}_0) \geq c+ (e^{\mathbf{A\bar x}^k+\frac{1}{2}{\rm diag}(\mathbf{AC}^k\mathbf{A}^t)},\mathbf{1}_n)\geq c.
\end{equation*}
By the Cauchy-Schwarz inequality, we have $(\mathbf{\bar x}^k-\bm\mu_0)^t\mathbf{C}_0^{-1}
(\mathbf{\bar x}^k-\bm\mu_0) + d(\mathbf{C}^k,\mathbf{C}_0) \leq c.$ This immediately
implies a uniform bound on $\{(\mathbf{\bar x}^k,\mathbf{C}^k)\}$ and $\{(\mathbf{C}^k)^{-1}\}$.
Thus, there exists a convergent subsequence, relabeled as $\{(\mathbf{\bar x}^k,\mathbf{C}^k)\}$,
with a limit $(\mathbf{\bar x}^*,\mathbf{C}^*)\in\mathbb{R}^m\times\mathcal{S}_m^+$. Then
by the continuity of the functional $F$ in $(\mathbf{\bar x},\mathbf{C})$, we deduce
that $(\mathbf{\bar x}^*,\mathbf{C}^\ast)$ is a maximizer to $F(\mathbf{\bar x},\mathbf{C})$,
i.e., the existence of a maximizer. The uniqueness follows from
the strict joint-concavity of $F(\mathbf{\bar{x}},\mathbf{C})$, cf. Theorem \ref{thm:concavity}.
\end{proof}

Since $F$ is composed of smooth functions, clearly it is smooth. Next
we give the gradient formulae, which are useful for developing numerical algorithms below.
\begin{theorem}
The gradients of the functional $F(\mathbf{\bar{x}},\mathbf{C})$ with respect to $\mathbf{\bar{x}}$ and $\mathbf{C}$ are respectively given by
\begin{align*}
	\frac{\partial F}{\partial \mathbf{\bar{x}}} &= \mathbf{A}^t\mathbf{y} - \mathbf{A}^te^{\mathbf{A}\mathbf{\bar{x}}
+\frac{1}{2}{\rm diag}(\mathbf{A}\mathbf{C}\mathbf{A}^t)} - \mathbf{C}^{-1}_0(\mathbf{\bar{x}}-\bm{\mu}_0),\\
	\frac{\partial F}{\partial \mathbf{ C}} &= \tfrac{1}{2}[-\mathbf{A}^t{\rm diag}(e^{\mathbf{A}\mathbf{\bar{x}}
+\frac{1}{2}{\rm diag}(\mathbf{A}\mathbf{C}\mathbf{A}^t)})\mathbf{A} - \mathbf{C}^{-1}_0 + \mathbf{C}^{-1}].
\end{align*}
\end{theorem}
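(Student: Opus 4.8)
The plan is to differentiate the explicit expression \eqref{eq:lbxc} for $F(\mathbf{\bar x},\mathbf{C})$ term by term, which is legitimate because $F$ is a finite sum of smooth functions. I would sort the terms into three groups: those that are linear (or constant) in $\mathbf{\bar x}$ or $\mathbf{C}$, whose derivatives are immediate; the terms $-\frac{1}{2}\mathrm{tr}(\mathbf{C}_0^{-1}\mathbf{C})$ and $\frac{1}{2}\ln|\mathbf{C}|$, which are handled directly by the two matrix-calculus identities in \eqref{eqn:deriv-logdet}; and the exponential ``fidelity'' term $-(\mathbf{1}_n,e^{\mathbf{A}\mathbf{\bar x}+\frac{1}{2}\mathrm{diag}(\mathbf{A}\mathbf{C}\mathbf{A}^t)})$, which couples $\mathbf{\bar x}$ and $\mathbf{C}$ and demands the most care. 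Throughout I would introduce the shorthand $b_i=(\mathbf{a}_i,\mathbf{\bar x})+\frac{1}{2}\mathbf{a}_i^t\mathbf{C}\mathbf{a}_i$, so that the fidelity term reads $-\sum_{i=1}^n e^{b_i}$, using that the $i$-th row of $\mathbf{A}$ is $\mathbf{a}_i^t$, whence $(\mathbf{A}\mathbf{\bar x})_i=(\mathbf{a}_i,\mathbf{\bar x})$ and $\mathrm{diag}(\mathbf{A}\mathbf{C}\mathbf{A}^t)_i=\mathbf{a}_i^t\mathbf{C}\mathbf{a}_i$.

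For the gradient in $\mathbf{\bar x}$: the term $(\mathbf{y},\mathbf{A}\mathbf{\bar x})$ contributes $\mathbf{A}^t\mathbf{y}$, and the quadratic penalty $-\frac{1}{2}(\mathbf{\bar x}-\bm\mu_0)^t\mathbf{C}_0^{-1}(\mathbf{\bar x}-\bm\mu_0)$ contributes $-\mathbf{C}_0^{-1}(\mathbf{\bar x}-\bm\mu_0)$, using the symmetry of $\mathbf{C}_0^{-1}$. For the fidelity term, since $\partial b_i/\partial\mathbf{\bar x}=\mathbf{a}_i$, the chain rule gives $-\sum_i e^{b_i}\mathbf{a}_i=-\mathbf{A}^t e^{\mathbf{A}\mathbf{\bar x}+\frac{1}{2}\mathrm{diag}(\mathbf{A}\mathbf{C}\mathbf{A}^t)}$, which assembles into the claimed first formula.

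For the gradient in $\mathbf{C}$: the terms $-\frac{1}{2}\mathrm{tr}(\mathbf{C}_0^{-1}\mathbf{C})$ and $\frac{1}{2}\ln|\mathbf{C}|$ yield $-\frac{1}{2}\mathbf{C}_0^{-1}$ and $\frac{1}{2}\mathbf{C}^{-1}$ respectively, straight from \eqref{eqn:deriv-logdet}. The crux is again the fidelity term. Here $\mathbf{C}$ enters each summand only through the scalar quadratic form $\frac{1}{2}\mathbf{a}_i^t\mathbf{C}\mathbf{a}_i=\frac{1}{2}\mathrm{tr}(\mathbf{C}\mathbf{a}_i\mathbf{a}_i^t)$, whose derivative in $\mathbf{C}$ is the rank-one matrix $\frac{1}{2}\mathbf{a}_i\mathbf{a}_i^t$. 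The chain rule then produces $-\frac{1}{2}\sum_i e^{b_i}\mathbf{a}_i\mathbf{a}_i^t$, and I would repackage this weighted sum of rank-one matrices using the elementary identity $\mathbf{A}^t\mathrm{diag}(\mathbf{w})\mathbf{A}=\sum_i w_i\mathbf{a}_i\mathbf{a}_i^t$ (valid precisely because the rows of $\mathbf{A}$ are the $\mathbf{a}_i^t$), with $\mathbf{w}=e^{\mathbf{A}\mathbf{\bar x}+\frac{1}{2}\mathrm{diag}(\mathbf{A}\mathbf{C}\mathbf{A}^t)}$, to obtain the compact form $-\frac{1}{2}\mathbf{A}^t\mathrm{diag}(e^{\mathbf{A}\mathbf{\bar x}+\frac{1}{2}\mathrm{diag}(\mathbf{A}\mathbf{C}\mathbf{A}^t)})\mathbf{A}$. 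Collecting the three contributions gives the claimed second formula.

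The main obstacle, and really the only subtle point, is the differentiation of the fidelity term with respect to $\mathbf{C}$: one must recognize that $\mathbf{C}$ appears in each summand solely through the scalar $\mathbf{a}_i^t\mathbf{C}\mathbf{a}_i$, rewrite it as a trace to apply the rank-one derivative, and then correctly reassemble the sum of rank-one terms back into the $\mathbf{A}^t\mathrm{diag}(\cdot)\mathbf{A}$ structure. A secondary point deserving a remark is the symmetry constraint: because the optimization is over $\mathcal{S}_m^+$ rather than all of $\mathbb{R}^{m\times m}$, I would verify that the resulting gradient is symmetric---each contributing matrix ($\mathbf{a}_i\mathbf{a}_i^t$, $\mathbf{C}_0^{-1}$, and $\mathbf{C}^{-1}$) is manifestly symmetric---so that the expression is consistent with the identities \eqref{eqn:deriv-logdet} as used in the symmetric setting.
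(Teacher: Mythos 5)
Your proposal is correct and follows essentially the same route as the paper: term-by-term differentiation of the explicit expression \eqref{eq:lbxc}, using the identities \eqref{eqn:deriv-logdet} for the trace and log-determinant terms and the chain rule for the exponential fidelity term. The only cosmetic difference is in the $\mathbf{C}$-derivative of the fidelity term, where you decompose it into rank-one contributions $e^{b_i}\mathbf{a}_i\mathbf{a}_i^t$ and reassemble them into $\mathbf{A}^t\mathrm{diag}(\cdot)\mathbf{A}$, whereas the paper computes the same Fr\'echet derivative via a first-order perturbation $\mathbf{H}$ and the cyclic property of the trace; both yield the identical result.
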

\begin{proof}
Let $\mathbf{d} = \mathbf{A}\mathbf{\bar{x}}+\frac{1}{2}\text{diag}(\mathbf{A}\mathbf{C}\mathbf{A}^t)$. Then by the chain rule
\begin{align*}
		\frac{\partial}{\partial \bar{x}_i}(\mathbf{1}_n,e^\mathbf{d}) &= \frac{\partial}{\partial \bar{x}_i}\sum_{j=1}^n e^{d_j} = \sum_{j=1}^n \frac{\partial e^{d_j}}{\partial d_j}\frac{\partial d_j}{\partial \bar{x}_i}= \sum_{j=1}^ne^{d_j}(A)_{ji}.
\end{align*}
That is, we have $\frac{\partial}{\partial \mathbf{\bar{x}}}(\mathbf{1}_n,e^\mathbf{d})= \mathbf{A}^te^{\mathbf{d}}$,
showing the first formula. Next we derive the gradient with respect to the covariance $\mathbf{C}$. In view of \eqref{eqn:deriv-logdet},
it remains to differentiate the term $(\mathbf{1}_n,e^{\mathbf{A}\mathbf{\bar{x}}+\frac{1}{2}\mathrm{diag}
(\mathbf{A}\mathbf{C}\mathbf{A}^t)})$ with respect to $\mathbf{C}$. To this end, let $\mathbf{H}$ be a small
perturbation to $\mathbf{C}$. By Taylor expansion, and with the diagonal matrix $\mathbf{D}=\text{diag}
(e^{\mathbf{A}\mathbf{\bar{x}}+\frac{1}{2}\text{diag}(\mathbf{A}\mathbf{C}\mathbf{A}^t)})$, we deduce
\begin{equation*}
		\hspace{1em}(\mathbf{1}_n,e^{\mathbf{A}\mathbf{\bar{x}}+\frac{1}{2}\text{diag}(\mathbf{A}\mathbf{(C+H)}\mathbf{A}^t)}) - (\mathbf{1}_n,e^{\mathbf{A}\mathbf{\bar{x}}+\frac{1}{2}\text{diag}(\mathbf{A}\mathbf{C}\mathbf{A}^t)})
		=(\mathbf{D}, \tfrac{1}{2}\text{diag}(\mathbf{A}\mathbf{H}\mathbf{A}^t))+\mathcal{O}(\|\mathbf{H}\|^2).
\end{equation*}
Since the matrix $\mathbf{D}$ is diagonal, by the cyclic property of trace, we have
\begin{equation*}
	\begin{aligned}
	(\mathbf{D}, \tfrac{1}{2}\text{diag}(\mathbf{A}\mathbf{H}\mathbf{A}^t)) =(\mathbf{D},\tfrac{1}{2}(\mathbf{A}\mathbf{H}\mathbf{A}^t))
		=\tfrac{1}{2}\text{tr}(\mathbf{DAH}^t\mathbf{A}^t)=\tfrac{1}{2}\text{tr}(\mathbf{A}^t\mathbf{DA}\mathbf{H}^t) =\tfrac{1}{2}(\mathbf{A^tDA},\mathbf{H}).
	\end{aligned}
\end{equation*}
Now the definition of the gradient completes the proof.
\end{proof}

An immediate corollary is the following optimality system.
\begin{corollary}\label{cor:opt}
The necessary and sufficient optimality system of problem \eqref{eqn:opt} is given by
\begin{align*}
	 \mathbf{A}^t\mathbf{y} - \mathbf{A}^te^{\mathbf{A}\mathbf{\bar{x}}+\frac{1}{2}{\rm diag}(\mathbf{A}\mathbf{C}\mathbf{A}^t)} - \mathbf{C}^{-1}_0(\mathbf{\bar{x}}-\bm{\mu}_0)&=0,\\
	\mathbf{C}^{-1}-\mathbf{A}^t{\rm diag}(e^{\mathbf{A}\mathbf{\bar{x}}+\frac{1}{2}{\rm diag}(\mathbf{A}\mathbf{C}\mathbf{A}^t)})\mathbf{A} - \mathbf{C}^{-1}_0&=0.
	\end{align*}
\end{corollary}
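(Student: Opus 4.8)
The plan is to identify the stated system with the vanishing of the gradient computed in the preceding theorem, and to argue that for the strictly concave functional $F$ this first-order condition is both necessary and sufficient for the global maximizer. First I would observe that the admissible set $\mathbb{R}^m\times\mathcal{S}_m^+$ is open: $\mathbb{R}^m$ is open, and $\mathcal{S}_m^+$, being the interior of the cone of symmetric positive semidefinite matrices, is relatively open within the space of symmetric matrices. By Theorem \ref{thm:existence}, the unique maximizer $(\mathbf{\bar{x}}^*,\mathbf{C}^*)$ lies in this set, so in particular $\mathbf{C}^*$ is strictly positive definite and hence an interior point. Consequently the standard unconstrained first-order necessary condition applies, namely that both partial gradients $\partial F/\partial\mathbf{\bar{x}}$ and $\partial F/\partial\mathbf{C}$ vanish at $(\mathbf{\bar{x}}^*,\mathbf{C}^*)$.

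Next I would substitute the explicit gradient formulae from the preceding theorem. Setting $\partial F/\partial\mathbf{\bar{x}}=0$ reproduces the first equation verbatim, while setting $\partial F/\partial\mathbf{C}=0$, multiplying through by $2$, and moving $\mathbf{C}^{-1}$ to the front yields the second equation. I would also remark that each summand occurring there, namely $\mathbf{A}^t\mathrm{diag}(\cdot)\mathbf{A}$, $\mathbf{C}_0^{-1}$, and $\mathbf{C}^{-1}$, is symmetric, so the stationarity equation is consistent with differentiation restricted to the space of symmetric matrices and raises no issue about symmetrizing the gradient.

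Finally, for sufficiency I would invoke the strict joint concavity established in Theorem \ref{thm:concavity}: for a strictly concave functional on a convex open set, any stationary point is automatically the unique global maximizer. Hence the vanishing-gradient system is not only necessary but also sufficient, which is precisely the claim. I do not anticipate a genuine obstacle here; the one point requiring care is the interiority of $\mathbf{C}^*$ within $\mathcal{S}_m^+$, since this is what lets us equate the free gradient to zero rather than work with a variational inequality on the boundary of the positive-definite cone, and that interiority is already furnished by the existence theorem.
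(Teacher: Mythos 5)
Your proposal is correct and follows exactly the route the paper intends: the paper states the corollary as an immediate consequence of the gradient formulae, with necessity coming from the first-order condition at the interior maximizer guaranteed by Theorem \ref{thm:existence} and sufficiency from the strict joint concavity of Theorem \ref{thm:concavity}. Your additional remarks on the openness of $\mathcal{S}_m^+$ and the symmetry of the terms in the stationarity equation are careful but do not change the argument.
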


\begin{remark}
Challis and Barber \cite{challis2013gaussian} showed that for log-concave site
posterior potentials, the variational lower bound is jointly concave in $\mathbf{\bar{x}}$
and the Cholesky factor $\mathbf{L}$ of the covariance $\mathbf{C}$.
This assertion holds also for the lower bound $F(\mathbf{\bar x},\mathbf{C})$ in \eqref{eq:lbxc},
i.e., joint concavity with respect to $(\mathbf{\bar x},\mathbf{L})$.
\end{remark}
\begin{remark}
Corollary \ref{cor:opt} indicates that the covariance $\mathbf{C}^*$  of the
optimal Gaussian approximation $q^*(\mathbf{ x})$ is of the following form:
\begin{equation*}
  (\mathbf{C}^*)^{-1} = \mathbf{C}_0^{-1} + \mathbf{A}^t\mathbf{D}\mathbf{A},
\end{equation*}
for some diagonal matrix $\mathbf{D}$. Thus it is tempting that one may minimize with respect
to $\mathbf{D}$ instead of $\mathbf{C}$ in order to reduce the complexity of the algorithm, by reducing the
number of unknowns from $m^2$ to $m$. However, $F$ is generally not concave with respect
to $\mathbf{D}$; see \cite{KhanMohamedMurphy:2012} for a one-dimensional counterexample. The loss of concavity
might complicate the analysis and computation.
\end{remark}

\begin{remark}
In practice, the parameter $\mathbf{x}$ in the model \eqref{eqn:poisson} often admits physical constraint. Thus it is
natural to impose a box constraint on the mean $\mathbf{\bar x}$ in problem \eqref{eqn:opt}, e.g., $c_l\leq \bar x_i\leq c_u$,
$i=1,\ldots,m$, for some $c_l<c_u$. This can be easily
incorporated into the optimality system in Corollary \ref{cor:opt}, and the algorithms below
remain valid upon minor changes, e.g., including a pointwise projection operator in the update of $\mathbf{\bar x}$.
\end{remark}

\section{Numerical algorithm and its complexity analysis}\label{sec:algorithm}

Now we develop an efficient numerical algorithm, which is of alternating direction maximization
type, provide an analysis of its complexity, and discuss strategies for complexity reduction.

\subsection{Numerical algorithm}

In view of the strict concavity of $F(\mathbf{\bar x},\mathbf{C})$,
it suffices to solve the optimality system (cf. Corollary \ref{cor:opt}):
\begin{align}
	 \mathbf{A}^t\mathbf{y} - \mathbf{A}^te^{\mathbf{A}\mathbf{\bar{x}}+\frac{1}{2}{\rm diag}(\mathbf{A}\mathbf{C}\mathbf{A}^t)} - \mathbf{C}^{-1}_0(\mathbf{\bar{x}}-\bm{\mu}_0)&=0,\label{eqn:barx}\\
	\mathbf{C}^{-1}-\mathbf{A}^t{\rm diag}(e^{\mathbf{A}\mathbf{\bar{x}}+\frac{1}{2}{\rm diag}(\mathbf{A}\mathbf{C}\mathbf{A}^t)})\mathbf{A} - \mathbf{C}^{-1}_0&=0.\label{eqn:C}
\end{align}
This consists of a coupled nonlinear system for $(\mathbf{\bar x},\mathbf{C})$.
We shall solve the system by alternatingly maximizing
$F(\mathbf{\bar x},\mathbf{C})$ with respect to $\mathbf{\bar x}$ and
$\mathbf{C}$, i.e., coordinate ascent. From the strict concavity in Theorem \ref{thm:concavity}, we
deduce that for a fixed $\mathbf{C}$, \eqref{eqn:barx} has a unique
solution $\mathbf{\bar x}$, and similarly, for a fixed $\mathbf{\bar x}$,
\eqref{eqn:C} has a unique solution $\mathbf{C}$. Below, we discuss the efficient numerical
solution of \eqref{eqn:barx}--\eqref{eqn:C}.

\subsubsection{Newton method for updating $\mathbf{\bar x}$}

To solve $\mathbf{\bar x}$ from \eqref{eqn:barx}, for a fixed $\mathbf{C}$, we employ a Newton
method. Let the nonlinear map $\mathbf{G}:\mathbb{R}^m\to\mathbb{R}^m$ be defined by
\begin{equation*}
  \mathbf{G}(\mathbf{\bar x})= \mathbf{A}^te^{\mathbf{A}\mathbf{\bar{x}}+\frac{1}{2}{\rm diag}(\mathbf{A}\mathbf{C}\mathbf{A}^t)} + \mathbf{C}^{-1}_0(\mathbf{\bar{x}}-\bm{\mu}_0)-\mathbf{A}^t\mathbf{y}.
\end{equation*}
The Jacobian $\partial \mathbf{G}$ of the map $\mathbf{G}$ is given by
\begin{equation*}
  \partial\mathbf{G}(\mathbf{\bar x})=\mathbf{A}^t\mathrm{diag}(e^{\mathbf{A}\mathbf{\bar{x}}+\frac{1}{2}{\rm diag}(\mathbf{A}\mathbf{C}\mathbf{A}^t)})\mathbf{A} + \mathbf{C}^{-1}_0 \geq \mathbf{C}_0^{-1},
\end{equation*}
where the partial ordering $\geq$ is in the sense of symmetric positive definite matrix, i.e., $\mathbf{X}\geq \mathbf{Y}$ if and
only if $\mathbf{X}-\mathbf{Y}$ is positive semidefinite. That is, the Jacobian $\partial\mathbf{G}
(\mathbf{\bar x})$ is uniformly invertible (since the prior covariance $\mathbf{C}_0^{-1}$ is invertible). This concurs
with the strict concavity of the functional $F(\mathbf{\bar x},\mathbf{C})$ in $\mathbf{\bar x}$.

This motivates the use of the Newton method or its variants: for a nonlinear system with uniformly invertible
Jacobians, the Newton method converges globally \cite{Kelley:1995}. Specifically, given $\mathbf{\bar x}^0$, we iterate
\begin{equation}\label{eqn:iter-barx}
    \partial\mathbf{G}(\mathbf{\bar x}^k) \delta \mathbf{\bar x} = -\mathbf{G}(\mathbf{\bar x}^k),\qquad
    \mathbf{\bar x}^{k+1}  = \mathbf{\bar  x}^k + \delta\mathbf{\bar x}.
\end{equation}
The main cost of the Newton update \eqref{eqn:iter-barx} lies in solving the linear system involving
$\partial\mathbf{G}(\mathbf{\bar x}^k)$. Clearly, the Jacobian $\partial\mathbf{G}(\mathbf{\bar x}^k)$
is symmetric and positive definite, and thus the (preconditioned) conjugate gradient method is a natural
choice for solving the linear system. One may use $\mathbf{C}_0^{-1}$ (or the diagonal part
of the Jacobian $\mathbf{\partial G}(\mathbf{\bar x})$) as a preconditioner.
It is worth noting that inverting the Jacobian $\partial \mathbf{G}(\mathbf{\bar x})$ is identical
with one fixed point update of the covariance $\mathbf{C}$ below. In the presence of \textit{a priori}
structural information, this can be carried out efficiently even for very large-scale problems; see Section
\ref{ssec:complexity} below for further details. By the fast local convergence of the Newton method,
a few iterations suffice the desired accuracy, which is fully confirmed by our numerical experiments.

\subsubsection{Fixed-point method for updating $\mathbf{C}$}
Next we turn to the solution of \eqref{eqn:C} for updating  $\mathbf{C}$, with  $\mathbf{\bar x}$ fixed. There
are several different strategies, and we shall describe two of them below. The first option is to employ
a Newton method. Let the nonlinear map $\mathbf{S}:\mathbb{R}^{m\times m}\to\mathbb{R}^{m\times m}$ be defined by
\begin{equation*}
  \mathbf{S}(\mathbf{C}) = \mathbf{C}^{-1}-\mathbf{C}_0^{-1}-\mathbf{A}^t{\rm diag}(e^{\mathbf{A\bar x}+{\rm diag}(\mathbf{ACA}^t)})\mathbf{A}.
\end{equation*}
The Jacobian $\partial\mathbf{S}$ of the map $\mathbf{S}$ is given by
\begin{equation*}
  \partial\mathbf{S}(\mathbf{C})[\mathbf{H}] = -\mathbf{C}^{-1}\mathbf{H}\mathbf{C}^{-1}-\mathbf{A}^t{\rm diag}(e^{\mathbf{A\bar x}+{\rm diag}(\mathbf{ACA}^t)})\mathrm{diag}(\mathbf{AHA}^t )\mathbf{A}.
\end{equation*}
It can be verified that the map $\partial\mathbf{S}(\mathbf{C})$ is symmetric with a uniformly bounded
inverse (see the proof of Theorem \ref{thm:sensitivity-sol} in the appendix for details).
However, its explicit form seems not available due to the presence of the operator $\mathrm{diag}$.
Nonetheless, one can apply a (preconditioned) conjugate gradient method for updating $\mathbf{C}$.
 The Newton iteration is guaranteed to converge globally.

The second option is to use a fixed-point iteration. This choice is very
attractive since it avoids solving huge linear systems. Specifically, given
an initial guess $\mathbf{C}^0$, we iterate by
\begin{equation}\label{eqn:iter-C}
    \mathbf{D}^k  = \mathrm{diag}(e^{\mathbf{A\bar x}+\frac{1}{2}\mathrm{diag}(\mathbf{AC}^k\mathbf{A}^t)}),\qquad
    \mathbf{C}^{k+1}  =(\mathbf{C}_0^{-1}+ \mathbf{A}^t\mathbf{D}^k\mathbf{A})^{-1}.
\end{equation}
Conceptually, it has the flavor of a classical fixed point scheme for solving algebraic
Riccati equations in Kalman filtering \cite{AndersonKleindorfer:1969}, and it has also been used in a slightly
different context of variational inference with Gaussian processes \cite{KhanMohamedMurphy:2012}. Numerically, each inner iteration
of \eqref{eqn:iter-C} involves computing the vector ${\rm diag}(\mathbf{AC}^k\mathbf{A}^t)$ (which should be regarded
as computing $\mathbf{a}_i\mathbf{C}^k\mathbf{a}_i^t$, $i=1,\ldots,m$, instead of forming the full matrix
$\mathbf{AC}^k\mathbf{A}^t$) and a matrix inversion.

Next we briefly discuss the convergence of \eqref{eqn:iter-C}. Clearly, for all iterates $\mathbf{C}^k$,
we have $\mathbf{C}^k\leq \mathbf{C}_0$. We claim $\lambda_{\max}(\mathbf{C}^k)\leq \lambda_{\max}(\mathbf{C}_0).$ To see this,
let $\mathbf{v}\in\mathbb{R}^m$ be a unit eigenvector corresponding to the largest eigenvalue $\lambda_{\max}(\mathbf{C}^k)$, i.e.,
$\mathbf{v}^t\mathbf{C}^k\mathbf{v}=\lambda_{\max}(\mathbf{C}^k)$. Then by the minmax principle
\begin{equation*}
  \lambda_{\max}(\mathbf{C}^k)=\mathbf{v}^t\mathbf{C}^k\mathbf{v} \leq \mathbf{v}^t\mathbf{C}_0\mathbf{v} \leq \sup_{\mathbf{v}\in\mathbb{S}^{m}}\mathbf{v}^t\mathbf{C}_0\mathbf{v}=\lambda_{\max}(\mathbf{C}_0).
\end{equation*}
Thus, the sequence $\{\mathbf{C}^k \}_{k=1}^\infty$ generated by the iteration \eqref{eqn:iter-C}
is uniformly bounded in the spectral norm (and thus any norm due to the norm equivalence in a finite-dimensional space).
Hence, there exists a convergent subsequence, also relabeled as $\{\mathbf{C}^k \}$, such that $\mathbf{C}^k\to
\mathbf{C}^*$, for some $\mathbf{C}^*$. In practice, the iterates converge fairly steadily to the unique
solution to \eqref{eqn:C}, which however remains to be established. In Appendix \ref{app:iter-C}, we show a certain
``monotone'' type convergence of \eqref{eqn:iter-C} for the initial guess $\mathbf{C}^0=\mathbf{C}_0$.

\subsubsection{Variational Gaussian approximation algorithm}

With the preceding two inner solvers, we are ready to state the complete procedure in Algorithm \ref{alg:vb}.
One natural stopping criterion at Step 7 is to monitor ELBO. However, computing ELBO can be expensive and
cheap alternatives, e.g., relative change of the mean $\mathbf{\bar x}$, might be considered.
Note that Step 3 of Algorithm \ref{alg:vb}, i.e., randomized singular value decomposition (rSVD), has to be carried
out only once, and it constitutes a preprocessing step. Its crucial role will be discussed in Section \ref{ssec:complexity} below.

With exact inner updates $(\mathbf{\bar x}^k,\mathbf{C}^k)$, by
the alternating maximizing property, the sequence $\{F(\mathbf{\bar x}^k,
\mathbf{C}^k)\}$ is guaranteed to be monotonically increasing, i.e.,
\begin{equation*}
  F(\mathbf{\bar x}^0,\mathbf{C}^0) \leq F(\mathbf{\bar x}^1,\mathbf{C}^0)\leq F(\mathbf{\bar x}^1,\mathbf{C}^1)\leq ... \leq F(\mathbf{\bar x}^k,\mathbf{C}^k)\leq ...,
\end{equation*}
with the inequality being strict until convergence is reached. Further, $F(\mathbf{\bar x}^k,\mathbf{C}^k)\leq \ln Z(\mathbf{y})$. Thus,
$\{F(\mathbf{\bar x}^k,\mathbf{C}^k)\}$ converges. Further, by \cite[Prop. 2.7.1]{Bertsekas:2016}, the coordinate ascent method converges if
the maximization with respect to each coordinate is uniquely attained. Clearly, Algorithm \ref{alg:vb} is
a coordinate ascent method for $F(\mathbf{\bar x},\mathbf{C})$, and $F(\mathbf{\bar x},\mathbf{C})$ satisfies the unique solvability condition.
Thus the sequence $\{(\mathbf{\bar x}^k,\mathbf{C}^k)\}$ generated by Algorithm \ref{alg:vb}
 converges to the unique maximizer of $F(\mathbf{\bar x},\mathbf{C})$.

\begin{algorithm}[hbt!]
\centering
\caption{Variational Gaussian Approximation Algorithm\label{alg:vb}}
\begin{algorithmic}[1]
	\STATE  Input: $(\mathbf{A},\mathbf{y})$, specify the prior $(\bm{\mu}_0,\mathbf{C}_0)$, and the maximum number $K$ of iterations
    \STATE  Initialize $\mathbf{\bar x}=\mathbf{\bar x}^1$ and $\mathbf{C}=\mathbf{C}^1$;
	\STATE  SVD: $(\mathbf{U}, \mathbf{\Sigma}, \mathbf{V}) = \text{rSVD}(\mathbf{A})$;
	\FOR{$k=1,2,\ldots,K$}
	  \STATE Update the mean $\mathbf{\bar x}^{k+1}$ by \eqref{eqn:iter-barx};
      \STATE Update the covariance $\mathbf{C}^{k+1}$ by \eqref{eqn:iter-C};
    \STATE Check the stopping criterion.
	\ENDFOR
     \STATE Output: $(\mathbf{\bar{x}},\mathbf{C})$
\end{algorithmic}
\end{algorithm}

\subsection{Complexity analysis and reduction}\label{ssec:complexity}
Now we analyze the computational complexity of Algorithm \ref{alg:vb}, and describe strategies
for complexity reduction, in order to arrive at a scalable implementation.
When evaluating the functional $F(\mathbf{\bar x},\mathbf{C})$ and its gradient, the constant terms can be
precomputed. Thus, it suffices to analyze the terms that will be updated.
Standard linear algebra \cite{GolubVanLoan:2013} gives the following operational complexity.
\begin{itemize}
	\item The complexity of evaluating the objective functional $F(\mathbf{\bar{x}},\mathbf{C})$ is $\mathcal{O}(m^2n+m^3)$:
    \begin{itemize}
      \item the inner product $-(\mathbf{1}_n,e^{\mathbf{A}\mathbf{\bar{x}}+\frac{1}{2}\text{diag}(\mathbf{A}\mathbf{C}\mathbf{A}^t)})\sim\mathcal{O}(m^2n)$
      \item the matrix determinant $\text{ln}|\mathbf{C}|\sim\mathcal{O}(m^3)$
    \end{itemize}
	\item The complexity of evaluating the gradient $\frac{\partial F}{\partial \mathbf{\bar{x}}} $ is $\mathcal{O}(m^2n)$:
\begin{itemize}
  \item the matrix-vector product $\mathbf{A}^te^{\mathbf{A}\mathbf{\bar{x}}+\frac{1}{2}\text{diag}(\mathbf{A}\mathbf{C}\mathbf{A}^t)}\sim\mathcal{O}(m^2n)$
\end{itemize}
	\item The complexity of evaluating the gradient $\frac{\partial F}{\partial\mathbf{ C}}$ is $\mathcal{O}(m^2n+m^3)$:
 \begin{itemize}
   \item the matrix product $\mathbf{A}^t\text{diag}(e^{\mathbf{A}\mathbf{\bar{x}}+\frac{1}{2}\text{diag}(\mathbf{A}\mathbf{C}\mathbf{A}^t)})\mathbf{A}\sim\mathcal{O}(m^2n)$
   \item the matrix inversion $\mathbf{C}^{-1}\sim\mathcal{O}(m^3)$.
 \end{itemize}
\end{itemize}

In summary, evaluating ELBO $F(\mathbf{\bar x},\mathbf{C})$ and its gradients
each involves $\mathcal{O}(nm^2+m^3)$ complexity, which is infeasible for large-scale
problems. The most expensive piece lies in matrix products/inversion, e.g., $(\mathbf{1}_n,e^{\mathbf{A}
\mathbf{\bar{x}}+\frac{1}{2}\text{diag}(\mathbf{A}\mathbf{C}\mathbf{A}^t)})$, $\mathbf{A}^te^{\mathbf{A}
\mathbf{\bar{x}}+\frac{1}{2}\text{diag}(\mathbf{A}\mathbf{C}\mathbf{A}^t)}$ and $\mathbf{A}^t\text{diag}
(e^{\mathbf{A}\mathbf{\bar{x}}+\frac{1}{2}\text{diag}(\mathbf{A}\mathbf{C}\mathbf{A}^t)})\mathbf{A}$.
The log-determinant $\text{ln}|\mathbf{C}|$ can be approximated accurately
with $\mathcal{O}(m^2)$ operations by a stochastic algorithm \cite{ZhangLeithead:2007}.
In many practical inverse problems, there do exist structures: (i) $\mathbf{A}$ is
low rank, and (ii) $\mathbf{C}$ is sparse, which can be leveraged to reduce the per-iteration cost.

First, for many inverse problems, the matrix $\mathbf{A}$ is ill-conditioned, and the singular
values decay to zero. Thus, $\mathbf{A}$ naturally has a low-rank structure. The effective rank $r$
is determined by the decay rate of the singular values. In this work, we assume a known rank $r$.
The rSVD is a powerful technique for obtaining low-rank approximations \cite{halko2011finding}. For a
rank $r$ matrix, the rSVD can yield an accurate approximation with $\mathcal{O}(mn\ln r + (m + n)r^2)$
operations \cite[pp. 225]{halko2011finding}. We denote the rSVD approximation by $\mathbf{A}\approx
\mathbf{U}\mathbf{\Sigma}\mathbf{V}^t$, where the matrices $\mathbf{U}\in\mathbb{R}^{n\times r}$
and $\mathbf{V}\in\mathbb{R}^{m\times r}$ are column orthonormal, and $\mathbf{\Sigma}\in
\mathbb{R}^{r\times r}$ is diagonal with its entries ordered nonincreasingly.

Second, the covariance $\mathbf{C}$ is approximately sparse, and each row/column has at most $s$
nonzero entries. This reflects the fact that only (physically) neighboring elements
are highly correlated, and there is no long range correlation. This choice will be implemented in
the numerical experiments for 2D image deblurring. Naturally, one can also consider a more
flexible option by adaptively selecting the sparsity pattern. This can be achieved by penalizing of the
off-diagonal entries of $\mathbf{C}$ by the $\ell^1$-norm, which allows automatically detecting
significant correlation \cite{RavikumarWainwrightPaskuttiYu:2011}. Other structures, e.g., low-rank
plus sparsity, offer potential alternatives. We leave these advanced
options to a future study.
	
Under these structural assumptions, the complexity of computing the terms $(\mathbf{1}_n,e^{\mathbf{A}\mathbf{\bar{x}}+\frac{1}{2}\text{diag}(\mathbf{A}\mathbf{C}\mathbf{A}^t)})$, $ \mathbf{A}^te^{\mathbf{A}\mathbf{\bar{x}}+\frac{1}{2}\text{diag}(\mathbf{A}\mathbf{C}\mathbf{A}^t)}$ and $\mathbf{A}^t\text{diag}(e^{\mathbf{A}\mathbf{\bar{x}}+\frac{1}{2}\text{diag}(\mathbf{A}\mathbf{C}\mathbf{A}^t)})\mathbf{A}$
can be reduced to $\mathcal{O}(smn)$. Thus, the complexity of calculating $F$ and $\frac{\partial F}{\partial
\mathbf{\bar{x}}}$ is reduced to $\mathcal{O}(smn+m^2)$. For the matrix inversion in
\eqref{eqn:iter-C}, we exploit the low-rank structure of $\mathbf{A}$. Upon recalling the low-rank approximation
of $\mathbf{A}$ and the Sherman-Morrison-Woodbury formula \cite[pp. 65]{GolubVanLoan:2013}, i.e.,
\begin{equation*}
(\mathbf{\tilde{A}}+\mathbf{\tilde{U}}\mathbf{\tilde{V}})^{-1} = \mathbf{\tilde{A}}^{-1}-\mathbf{\tilde{A}}^{-1}\mathbf{\tilde{U}}(\mathbf{I}+\mathbf{\tilde{V}}
\mathbf{\tilde{A}}^{-1}\mathbf{\tilde{U}})^{-1}\mathbf{\tilde{V}}\mathbf{\tilde{A}}^{-1},
\end{equation*}
 we deduce (with $\mathbf{D}=\text{diag}(e^{\mathbf{A}\mathbf{\bar{x}}+
\frac{1}{2}\text{diag}(\mathbf{A}\mathbf{C}\mathbf{A}^t)})$)
\begin{equation}\label{eqn:update-C-woodbury}
\mathbf{C} = \mathbf{C}_0 -\mathbf{C}_0\mathbf{V}\mathbf{\Sigma}\mathbf{U}^t\mathbf{D}\mathbf{U}\mathbf{\Sigma}(\mathbf{I}
+\mathbf{V}^t\mathbf{C}_0\mathbf{V}\mathbf{\Sigma}\mathbf{U}^t\mathbf{D}\mathbf{U}\mathbf{\Sigma})^{-1}\mathbf{V}^t\mathbf{C}_0.
\end{equation}
Note that the inversion step only involves a matrix in $\mathbb{R}^{r\times r}$, and can be carried out
efficiently. The sparsity structure on $\mathbf{C}$ can be enforced by computing only the respective entries.
Then the update formula \eqref{eqn:update-C-woodbury} can be achieved
in $\mathcal{O}(smn+r^2n+r^2m)$ operations. In comparison with the $\mathcal{O}(m^3+nm^2)$ complexity
of the direct implementation, this represents a substantial complexity reduction.

\section{Hyperparameter choice with hierarchical model}\label{sec:hyper}
When encoding prior knowledge about the unknown $\mathbf{x}$ into the prior $p(\mathbf{x})$, it is often necessary
to tune its strength, a scalar parameter commonly known as hyperparameter.
It plays the role of the regularization parameter in variational regularization
\cite[Chapter 7]{ito2014inverse}, where its proper choice is notoriously challenging.
In the Gaussian prior $p(\mathbf{x})$, $\mathbf{C}_0=\alpha^{-1}\bar{\mathbf{C}}_0$, where
$\bar{\mathbf{C}}_0$ describes the interaction structure and
the scalar $\alpha$ determines the strength of the interaction which has to be specified.

In the Bayesian paradigm, one principled approach to handle hyperparameters is hierarchical modeling,
by assuming a hyperprior and treating them as a part of the
inference procedure. Specifically, we write the Gaussian prior $p(\mathbf{x}|\alpha)=\mathcal{N}(\mathbf{x}
|\mathbf{0},\alpha^{-1}\bar{\mathbf{C}}_0)$, and employ
a Gamma distribution $p(\alpha|a,b)=\text{Gamma}(\alpha|a,b)$ on $\alpha$, where $(a,b)$
are the parameters. The Gamma distribution is the conjugate prior for $\alpha$, and it is
analytically and computationally convenient. In practice, one may take $(a,b)$ close
to $(1,0)$ to mimic a noninformative prior. Then appealing to Bayes' formula again, one obtains a
 posterior distribution (jointly over $(\mathbf{x},\alpha)$).
Conceptually, with the VGA, this construction determines the optimal parameter by maximizing ELBO as a function of
$\alpha$, i.e., model selection within a parametric family. Thus it can be viewed as a direct application
of ELBO in model selection.

One may explore the resulting joint posterior distribution in several ways \cite[Chapter 7]{ito2014inverse}.
In this work, we employ an EM type method to maximize the following (joint) lower bound
\begin{equation*}
\begin{aligned}
	F(\mathbf{\bar{x}},\mathbf{C},\alpha) &= \int q(\mathbf{x})\text{ln}\frac{p(\mathbf{x},\mathbf{y}|\alpha)p(\alpha|a,b)}{q(\mathbf{x})}\text{d}\mathbf{x}\\
	&= \int q(\mathbf{x})\text{ln}\frac{p(\mathbf{x},\mathbf{y}|\alpha)}{q(\mathbf{x})}\text{d}\mathbf{x} + \int q(\mathbf{x})\text{ln}p(\alpha|a,b)\text{d}\mathbf{x}\\
	&= F_\alpha(\mathbf{\bar{x}},\mathbf{C}) + (a-1)\ln\alpha -\alpha b + \ln\frac{b^a}{\Gamma(a)},
\end{aligned}
\end{equation*}
where the subscript  $\alpha$ indicates the dependence of ELBO
on $\alpha$. Then, using \eqref{eq:lbxc} and substituting $\mathbf{C}_0$ with $\alpha^{-1}\bar{\mathbf{C}}_0$, we have
\begin{equation}\label{eq:hyperbd}
	\begin{split}
	F(\mathbf{\bar{x}},\mathbf{C},\alpha) &= (\mathbf{y},\mathbf{A}\mathbf{\bar{x}})- (\mathbf{1}_n,e^{\mathbf{A}\mathbf{\bar{x}}+\tfrac{1}{2}\mathrm{diag}(\mathbf{A}\mathbf{C}\mathbf{A}^t)}) - \tfrac{\alpha}{2}(\mathbf{\bar{x}}-\bm{\mu}_0)^t\bar{\mathbf{C}}_0^{-1}(\mathbf{\bar{x}}-\bm{\mu}_0) - \tfrac{\alpha}{2}\mathrm{tr}(\bar{\mathbf{C}}_0^{-1}\mathbf{C})\\
&\hspace{1em}+ \tfrac{1}{2}\ln|\mathbf{C}| + \tfrac{m}{2}\ln\alpha - \tfrac{1}{2}\ln|\bar{\mathbf{C}}_0|+  (a-1)\ln\alpha -\alpha b + \tfrac{m}{2} - (\mathbf{1}_n,\ln(\mathbf{y}!)) + \ln\frac{b^a}{\Gamma(a)}.
	\end{split}
\end{equation}
This functional extends ELBO $F(\mathbf{\bar x},\mathbf{C})$ to estimate the
hyperparameter $\alpha$ simultaneously with $(\mathbf{\bar x},\mathbf{C})$ in a way
analogous to augmented Tikhonov regularization \cite{JinZou:2009}.

To maximize $F(\mathbf{\bar x},\mathbf{C},\alpha)$, we employ an EM algorithm \cite[Chapter 9.3]{Bishop:2006}. In
the E-step, we fix $\alpha$, and maximize $F(\mathbf{\bar{x}},\mathbf{C},\alpha)$ for a new Gaussian approximation
$\mathcal{N}(\mathbf{x}|\mathbf{\bar{x}},\mathbf{C})$ by Algorithm \ref{alg:vb}, with the unique maximizer
denoted by $(\mathbf{\bar x}_\alpha,\mathbf{C}_\alpha)$. Then in the M-step, we fix $(\mathbf{\bar{x}},\mathbf{C})$
and update $\alpha$ by
\begin{equation}\label{eq:alpha}
	\alpha = \frac{m+2(a-1)}{(\mathbf{\bar{x}}_\alpha-\bm{\mu}_0)^t\mathbf{\bar C}_0^{-1}(\mathbf{\bar{x}}_\alpha-\bm{\mu}_0)+\text{tr}(\bar{\mathbf{C}}_0^{-1}\mathbf{C}_\alpha)+2b}.
\end{equation}
This follows from the condition $\frac{\partial F}{\partial \alpha}=0$. These
discussions lead to the procedure in Algorithm \ref{alg:hyper}. A natural
stopping criterion at line 5 is the change of $\alpha$. Below we analyze the convergence of
Algorithm \ref{alg:hyper}.

\begin{remark}
The first two terms in the denominator of the iteration \eqref{eq:alpha} is given by
\begin{equation*}
  \alpha(\mathbf{\bar{x}}_\alpha-\bm{\mu}_0)^t\mathbf{\bar C}_0^{-1}(\mathbf{\bar{x}}_\alpha-\bm{\mu}_0)+\alpha{\rm tr}(\bar{\mathbf{C}}_0^{-1}\mathbf{C}_\alpha)
  =\mathbb{E}_{q(\mathbf{x})}[\|\mathbf{x}-\bm{\mu}_0\|^2_{\mathbf{C}_0^{-1}}],
\end{equation*}
i.e., the expectation of the negative logarithm of the Gaussian prior $p(\mathbf{x})$ with respect to the Gaussian posterior approximation
$q(\mathbf{x})$. Formally, the fixed point iteration \eqref{eq:alpha} can be viewed as an extension of that for
a balancing principle for Tikhonov regularization in \cite{JinZou:2009,ItoJinTakeuchi:2011} to a probabilistic
context.
\end{remark}

\begin{algorithm}[hbt!]
	\centering
     \caption{Hierarchical variational Gaussian approximation\label{alg:hyper}}
	\begin{algorithmic}[1]
	\STATE  Input $(\mathbf{A},\mathbf{y})$, and initialize $\alpha^1$
	\FOR{$k=1,2,\ldots$}
	\STATE  E-step: Update $(\mathbf{\bar{x}}^k,\mathbf{C}^k)$ by Algorithm \ref{alg:vb}:
      \begin{equation*}
         (\mathbf{\bar{x}}^k,\mathbf{C}^k)=\arg\max_{(\mathbf{\bar x},\mathbf{C})\in\mathbb{R}^m\times\mathcal{S}_m^+} F_{\alpha^k}(\mathbf{\bar{x}},\mathbf{C});
      \end{equation*}
	\STATE  M-step: Update $\alpha$ by \eqref{eq:alpha}.
    \STATE  Check the stopping criterion;
	\ENDFOR
     \STATE Output: $(\mathbf{\bar{x}},\mathbf{C})$
	\end{algorithmic}
\end{algorithm}

In order to analyze the convergence of Algorithm \ref{alg:hyper}, we write the functional $F_\alpha(\mathbf{\bar x},\mathbf{C})$ as
\begin{equation*}
  F_\alpha(\mathbf{\bar x},\mathbf{C}) = \phi(\mathbf{\bar x},\mathbf{C}) + \alpha \psi(\mathbf{\bar x},\mathbf{C}),
\end{equation*}
where
\begin{equation*}
  \begin{aligned}
     \phi(\mathbf{\bar{x}},\mathbf{C})&= (\mathbf{y},\mathbf{A}\mathbf{\bar{x}})- (\mathbf{1}_n,e^{\mathbf{A}\mathbf{\bar{x}}+\tfrac{1}{2}\mathrm{diag}(\mathbf{A}\mathbf{C}\mathbf{A}^t)})+\tfrac{1}{2}\ln|\mathbf{C}|- \tfrac{1}{2}\ln|\bar{\mathbf{C}}_0|+ - (\mathbf{1}_n,\ln(\mathbf{y}!)),\\
     \psi(\mathbf{\bar{x}},\mathbf{C})&=- \tfrac{1}{2}(\mathbf{\bar{x}}-\bm{\mu}_0)^t\bar{\mathbf{C}}_0^{-1}(\mathbf{\bar{x}}-\bm{\mu}_0)- \tfrac{1}{2}\mathrm{tr}(\bar{\mathbf{C}}_0^{-1}\mathbf{C})\le 0.
  \end{aligned}
\end{equation*}
Thus the functional $F_\alpha(\mathbf{\bar x},\mathbf{C})$ resembles classical Tikhonov regularization.
By Theorem \ref{thm:existence}, for any $\alpha>0$, there exists a unique maximizer $(\mathbf{\bar x}_\alpha,\mathbf{C}_\alpha)$ to $F_\alpha$,
and the value function $\psi(\mathbf{\bar x}_\alpha,\mathbf{C}_\alpha)$ is continuous in $\alpha$, cf. Lemma \ref{lem:cont}
below. In Appendix \ref{app:sensitivity}, we show that the maximizer $(\mathbf{\bar x}_\alpha,\mathbf{C}_\alpha)$ is actually differentiable in $\alpha$.

\begin{lemma}\label{lem:bdd}
For any $\alpha>0$, the maximizer $(\mathbf{\bar x}_\alpha,\mathbf{C}_\alpha)$ is bounded, with the bound depending only on $\alpha$.
\end{lemma}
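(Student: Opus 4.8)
The plan is to adapt the a priori estimate underlying Theorem~\ref{thm:existence} to the maximizer itself, keeping explicit track of the dependence on $\alpha$. First I would exploit the maximizing property by comparing against the prior configuration $(\bm\mu_0,\alpha^{-1}\bar{\mathbf{C}}_0)$, namely
\[
  F_\alpha(\mathbf{\bar x}_\alpha,\mathbf{C}_\alpha)\ \ge\ F_\alpha(\bm\mu_0,\alpha^{-1}\bar{\mathbf{C}}_0)=:c_\alpha,
\]
which is a finite number for each $\alpha>0$, the data $\mathbf{A},\mathbf{y},\bm\mu_0,\bar{\mathbf{C}}_0$ being fixed throughout.

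Next, following Remark~\ref{rmk:functional}, I would assemble the covariance-dependent terms $-\tfrac{\alpha}{2}\mathrm{tr}(\bar{\mathbf{C}}_0^{-1}\mathbf{C})+\tfrac12\ln|\mathbf{C}|$, together with the constants $\tfrac{m}{2}\ln\alpha+\tfrac{m}{2}-\tfrac12\ln|\bar{\mathbf{C}}_0|$, into $-\tfrac12 d(\mathbf{C},\alpha^{-1}\bar{\mathbf{C}}_0)$, so that \eqref{eq:hyperbd} takes the compact form
\[
  F_\alpha(\mathbf{\bar x},\mathbf{C})=(\mathbf{y},\mathbf{A}\mathbf{\bar x})-(\mathbf{1}_n,e^{\mathbf{A}\mathbf{\bar x}+\frac12\mathrm{diag}(\mathbf{ACA}^t)})-\tfrac{\alpha}{2}(\mathbf{\bar x}-\bm\mu_0)^t\bar{\mathbf{C}}_0^{-1}(\mathbf{\bar x}-\bm\mu_0)-\tfrac12 d(\mathbf{C},\alpha^{-1}\bar{\mathbf{C}}_0)-(\mathbf{1}_n,\ln(\mathbf{y}!)).
\]
Evaluating at $(\mathbf{\bar x}_\alpha,\mathbf{C}_\alpha)$, discarding the nonpositive fidelity term $-(\mathbf{1}_n,e^{\cdots})\le0$ and the nonnegative divergence $d(\mathbf{C}_\alpha,\alpha^{-1}\bar{\mathbf{C}}_0)\ge0$, and writing $u:=\|\bar{\mathbf{C}}_0^{-1/2}(\mathbf{\bar x}_\alpha-\bm\mu_0)\|$, I would split the linear term around $\bm\mu_0$ and bound the cross term by Cauchy--Schwarz in the $\bar{\mathbf{C}}_0^{-1}$ metric, $(\mathbf{A}^t\mathbf{y},\mathbf{\bar x}_\alpha-\bm\mu_0)\le\|\bar{\mathbf{C}}_0^{1/2}\mathbf{A}^t\mathbf{y}\|\,u=:Ku$. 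This reduces the comparison to the scalar quadratic inequality $\tfrac{\alpha}{2}u^2-Ku\le C_\alpha$, with $K$ fixed and $C_\alpha$ depending only on $\alpha$; since the coefficient of $u^2$ is $\tfrac{\alpha}{2}>0$, it is coercive in $u$ and immediately yields a bound on $u$, hence on $\|\mathbf{\bar x}_\alpha\|$, depending only on $\alpha$.

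Feeding this bound on $u$ back into the same inequality controls $d(\mathbf{C}_\alpha,\alpha^{-1}\bar{\mathbf{C}}_0)$ by a constant depending only on $\alpha$. I would then invoke the spectral consequence recorded in Remark~\ref{rmk:functional}: a bound $d(\mathbf{C},\mathbf{C}_0)\le c$ forces the generalized eigenvalues $\lambda_i$ of the pencil $(\mathbf{C},\mathbf{C}_0)$, which satisfy $\sum_i(\lambda_i-\ln\lambda_i-1)\le c$, to lie in a bounded interval bounded away from zero, and hence bounds both $\|\mathbf{C}\|\le\overline{\lambda}(c)\,\|\mathbf{C}_0\|$ and $\|\mathbf{C}^{-1}\|\le\underline{\lambda}(c)^{-1}\|\mathbf{C}_0^{-1}\|$. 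With $\mathbf{C}_0=\alpha^{-1}\bar{\mathbf{C}}_0$, the factors $\|\alpha^{-1}\bar{\mathbf{C}}_0\|$ and $\|\alpha\bar{\mathbf{C}}_0^{-1}\|$ scale explicitly with $\alpha$, giving the desired $\alpha$-dependent bounds on $\|\mathbf{C}_\alpha\|$ and $\|\mathbf{C}_\alpha^{-1}\|$ and completing the argument.

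I do not expect a genuine conceptual obstacle, since the estimate is a direct adaptation of the one in Theorem~\ref{thm:existence}; the only delicate points are the bookkeeping of the $\alpha$-factors when collapsing the covariance terms into $d(\cdot,\alpha^{-1}\bar{\mathbf{C}}_0)$, and the observation that the quadratic penalty already carries the full coefficient $\tfrac{\alpha}{2}$, so no splitting of the penalty is needed to absorb the linear term. The step that does the real work is converting the bound on the Bregman divergence $d(\mathbf{C}_\alpha,\alpha^{-1}\bar{\mathbf{C}}_0)$ into separate spectral bounds on $\mathbf{C}_\alpha$ and $\mathbf{C}_\alpha^{-1}$, which is precisely where the $\alpha$-dependence of the final constant enters.
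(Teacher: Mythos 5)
Your argument is correct, but it follows a genuinely different route from the paper's. You run the energy (comparison) argument from Theorem \ref{thm:existence} at the hierarchical level: compare $F_\alpha(\mathbf{\bar x}_\alpha,\mathbf{C}_\alpha)\ge F_\alpha(\bm\mu_0,\alpha^{-1}\bar{\mathbf{C}}_0)$, collapse the covariance terms into $-\tfrac12 d(\mathbf{C},\alpha^{-1}\bar{\mathbf{C}}_0)$, discard the sign-definite terms, absorb the linear term by Cauchy--Schwarz in the $\bar{\mathbf{C}}_0^{-1}$ metric, and then convert the resulting bound on the Bregman divergence into two-sided spectral bounds via the eigenvalue identity $d(\mathbf{C},\mathbf{C}_0)=\sum_i(\lambda_i-\ln\lambda_i-1)$ from Remark \ref{rmk:functional}. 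The paper instead works directly with the first-order optimality system: it tests \eqref{eqn:barx} with $\mathbf{\bar x}_\alpha$, uses the elementary bound $te^t\ge -e^{-1}$ together with the operator inequality $\mathbf{C}_\alpha\le\mathbf{C}_0$ (read off immediately from \eqref{eqn:C}) to get $\|\mathbf{\bar x}_\alpha\|\le c\alpha^{-1}$, and then reads the two-sided sandwich $(\mathbf{C}_0^{-1}+c\mathbf{A}^t\mathbf{A})^{-1}\le\mathbf{C}_\alpha\le\mathbf{C}_0$ off \eqref{eqn:C} a second time. The trade-off: the paper's route yields more explicit structural information (an explicit $\alpha^{-1}$ decay rate for the mean and a matrix sandwich for the covariance, the upper half of which is also what makes the later arguments in Lemma \ref{lem:bdd} and Appendix \ref{app:iter-C} work), whereas your route only needs the maximizing property rather than stationarity, reuses the existence machinery essentially verbatim, and makes the $\alpha$-bookkeeping transparent through the single quantity $d(\mathbf{C}_\alpha,\alpha^{-1}\bar{\mathbf{C}}_0)$. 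Both are complete proofs of the stated lemma; the only point worth flagging in yours is that the step ``bounded Bregman divergence implies bounded $\|\mathbf{C}\|$ and $\|\mathbf{C}^{-1}\|$'' is asserted in Remark \ref{rmk:functional} without proof, so you should spell out that $h(t)=t-\ln t-1$ is proper and tends to $+\infty$ at both $0^+$ and $+\infty$, which pins every generalized eigenvalue of the pencil $(\mathbf{C}_\alpha,\alpha^{-1}\bar{\mathbf{C}}_0)$ into a compact interval of $(0,\infty)$.
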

\begin{proof}
Taking inner product between \eqref{eqn:barx} and $\mathbf{\bar x}_\alpha$, we deduce
\begin{equation*}
   (\mathbf{C}_0^{-1}\mathbf{\bar x}_\alpha,\mathbf{\bar x}_\alpha) + (e^{\mathbf{A\bar x}_\alpha+{\rm diag}(\mathbf{ACA}^t)},\mathbf{A}\mathbf{\bar x}_\alpha)=(\mathbf{A}^t\mathbf{y},\mathbf{\bar x}_\alpha).
\end{equation*}
It can be verified directly that the function $f(t)=te^t$ is bounded from below by $-e^{-1}$ for $t\in\mathbb{R}$.
Meanwhile, by \eqref{eqn:C}, $\mathbf{C}\leq \mathbf{C}_0$, and thus
\begin{equation*}
  (e^{\mathbf{A\bar x}_\alpha+{\rm diag}(\mathbf{ACA}^t)},\mathbf{A}\mathbf{\bar x}_\alpha) \geq -e^{-1}\sum_ie^{{\rm diag}(\mathbf{ACA}^t)_i} \geq -e^{-1}\sum_ie^{{\rm diag}(\mathbf{AC}_0\mathbf{A}^t)_i}=-ce^{-1}.
\end{equation*}
This and the Cauchy-Schwarz inequality give $\|\mathbf{\bar x}_\alpha\|\leq c\alpha^{-1}$,
with $c$ depending only on $\mathbf{y}$. Next, by \eqref{eqn:C}, we have
\begin{equation*}
 0\leq  e^{(\mathbf{A\bar x})_i+{\rm diag}(\mathbf{ACA}^t)_i}\leq e^{(\mathbf{A\bar x})_i+{\rm diag}(\mathbf{AC}_0\mathbf{A}^t)_i}\leq c,
\end{equation*}
and consequently appealing to \eqref{eqn:C} again yields
$(\mathbf{C}_0^{-1}+c\mathbf{A}^t\mathbf{A})^{-1}\leq \mathbf{C}\leq \mathbf{C}_0$, completing the proof.
\end{proof}

\begin{lemma}\label{lem:cont}
The functional value $\psi(\mathbf{\bar{x}}_\alpha,\mathbf{C}_\alpha)$
is continuous at any $\alpha>0$.
\end{lemma}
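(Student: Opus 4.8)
The plan is to deduce the continuity of $\psi(\mathbf{\bar x}_\alpha,\mathbf{C}_\alpha)$ from the continuity of the maximizer map $\alpha\mapsto(\mathbf{\bar x}_\alpha,\mathbf{C}_\alpha)$ itself. Since $\psi(\mathbf{\bar x},\mathbf{C})=-\tfrac12(\mathbf{\bar x}-\bm\mu_0)^t\bar{\mathbf{C}}_0^{-1}(\mathbf{\bar x}-\bm\mu_0)-\tfrac12\mathrm{tr}(\bar{\mathbf{C}}_0^{-1}\mathbf{C})$ is jointly continuous in $(\mathbf{\bar x},\mathbf{C})$, it suffices to show that $(\mathbf{\bar x}_{\alpha_k},\mathbf{C}_{\alpha_k})\to(\mathbf{\bar x}_\alpha,\mathbf{C}_\alpha)$ whenever $\alpha_k\to\alpha>0$. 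I would establish this by a compactness-plus-uniqueness (closed-graph) argument, exploiting that the optimality system \eqref{eqn:barx}--\eqref{eqn:C} depends on $\alpha$ only through $\mathbf{C}_0=\alpha^{-1}\bar{\mathbf{C}}_0$, hence continuously.

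First I would fix $\alpha>0$ and a sequence $\alpha_k\to\alpha$, and restrict attention to $\alpha_k$ in a compact neighborhood such as $[\alpha/2,2\alpha]$. Inspecting the proof of Lemma \ref{lem:bdd} shows that its bounds can be made uniform over such a neighborhood: the estimate $\|\mathbf{\bar x}_{\alpha_k}\|\le c\alpha_k^{-1}$ gives a uniform bound on the means, while the two-sided matrix bound $(\alpha_k\bar{\mathbf{C}}_0^{-1}+c\mathbf{A}^t\mathbf{A})^{-1}\le\mathbf{C}_{\alpha_k}\le\alpha_k^{-1}\bar{\mathbf{C}}_0$ confines $\mathbf{C}_{\alpha_k}$ to a compact subset of $\mathcal{S}_m^+$, keeping both $\mathbf{C}_{\alpha_k}$ and $\mathbf{C}_{\alpha_k}^{-1}$ uniformly bounded away from the boundary of $\mathcal{S}_m^+$. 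Consequently $\{(\mathbf{\bar x}_{\alpha_k},\mathbf{C}_{\alpha_k})\}$ lies in a compact subset of $\mathbb{R}^m\times\mathcal{S}_m^+$, so every subsequence admits a further subsequence converging to some $(\mathbf{\bar x}^*,\mathbf{C}^*)$ with $\mathbf{C}^*\in\mathcal{S}_m^+$.

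Next I would pass to the limit in the optimality system. Each maximizer $(\mathbf{\bar x}_{\alpha_k},\mathbf{C}_{\alpha_k})$ satisfies \eqref{eqn:barx}--\eqref{eqn:C} with $\mathbf{C}_0^{-1}=\alpha_k\bar{\mathbf{C}}_0^{-1}$, and the left-hand sides are jointly continuous in $(\mathbf{\bar x},\mathbf{C},\alpha)$ on $\mathbb{R}^m\times\mathcal{S}_m^+\times(0,\infty)$; hence the limit $(\mathbf{\bar x}^*,\mathbf{C}^*)$ solves the same system with parameter $\alpha$. By Theorem \ref{thm:existence} (equivalently Corollary \ref{cor:opt}) this system has the unique solution $(\mathbf{\bar x}_\alpha,\mathbf{C}_\alpha)$, so $(\mathbf{\bar x}^*,\mathbf{C}^*)=(\mathbf{\bar x}_\alpha,\mathbf{C}_\alpha)$. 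Since the limit of every convergent subsequence is the same, the whole sequence converges, $(\mathbf{\bar x}_{\alpha_k},\mathbf{C}_{\alpha_k})\to(\mathbf{\bar x}_\alpha,\mathbf{C}_\alpha)$, and continuity of $\psi$ then gives $\psi(\mathbf{\bar x}_{\alpha_k},\mathbf{C}_{\alpha_k})\to\psi(\mathbf{\bar x}_\alpha,\mathbf{C}_\alpha)$, which is the claim.

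The step I expect to be the main obstacle is verifying that the maximizers remain in a \emph{fixed} compact subset of $\mathcal{S}_m^+$, i.e. that $\mathbf{C}_{\alpha_k}^{-1}$ stays uniformly bounded and $\mathbf{C}^*$ is genuinely positive definite rather than merely positive semidefinite; this is precisely what prevents the limit from escaping the admissible set, and it is exactly where the uniform (in $\alpha_k$) lower bound $\mathbf{C}_{\alpha_k}\ge(\alpha_k\bar{\mathbf{C}}_0^{-1}+c\mathbf{A}^t\mathbf{A})^{-1}$ from Lemma \ref{lem:bdd} is essential. As an alternative route and a useful consistency check, continuity could instead be obtained by first noting that $\psi(\mathbf{\bar x}_\alpha,\mathbf{C}_\alpha)$ is monotone in $\alpha$ (a standard two-inequality argument: adding $F_{\alpha_1}(\mathbf{\bar x}_{\alpha_1},\mathbf{C}_{\alpha_1})\ge F_{\alpha_1}(\mathbf{\bar x}_{\alpha_2},\mathbf{C}_{\alpha_2})$ and the symmetric inequality yields $(\alpha_1-\alpha_2)(\psi_1-\psi_2)\ge0$), and that the value function $\alpha\mapsto F_\alpha(\mathbf{\bar x}_\alpha,\mathbf{C}_\alpha)$ is convex as a pointwise supremum of the affine maps $\alpha\mapsto\phi(\mathbf{\bar x},\mathbf{C})+\alpha\psi(\mathbf{\bar x},\mathbf{C})$; the envelope identity, valid by the uniqueness in Theorem \ref{thm:existence}, then identifies $\psi(\mathbf{\bar x}_\alpha,\mathbf{C}_\alpha)$ with the derivative of this convex value function, whose continuity on $(0,\infty)$ follows from everywhere-differentiability.
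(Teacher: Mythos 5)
Your argument is correct and follows essentially the same route as the paper: uniform bounds from Lemma \ref{lem:bdd}, extraction of a convergent subsequence, identification of the limit as the unique maximizer via Theorem \ref{thm:existence}, and a standard subsequence argument to upgrade to convergence of the whole sequence. The only (minor) difference is that you identify the limit by passing to the limit in the optimality system \eqref{eqn:barx}--\eqref{eqn:C} — which is why you rightly insist on the uniform lower bound keeping $\mathbf{C}_{\alpha_k}^{-1}$ bounded — whereas the paper passes to the limit directly in the inequality $F_{\alpha_k}(\mathbf{\bar x}^k,\mathbf{C}^k)\ge F_{\alpha_k}(\mathbf{\bar x},\mathbf{C})$, which avoids any reference to the first-order conditions.
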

\begin{proof}
Let $\{\alpha^k\}\subset\mathbb{R}^+$ be a sequence convergent to $\alpha$. By Theorem \ref{thm:existence}, for each $\alpha^k$,
there exists a unique maximizer $(\mathbf{\bar x}^k,\mathbf{C}^k)$ to $F_{\alpha^k}(\mathbf{\bar x},\mathbf{C})$.
By Lemma \ref{lem:bdd}, the sequence $\{(\mathbf{\bar x}^k,\mathbf{C}^k)\}$
is uniformly bounded, and there exists a convergent subsequence, relabeled as $\{(\mathbf{\bar x}^k,\mathbf{C}^k)\} $, with a
limit $(\mathbf{\bar x}^*,\mathbf{C}^*)$. By the continuity of the functionals $\phi(\mathbf{\bar x},\mathbf{C})$ and
$\psi(\mathbf{\bar x},\mathbf{C})$, we have for any $(\mathbf{\bar x},\mathbf{C})\in\mathbb{R}^m\times\mathcal{S}_m^+$
\begin{equation*}
  \begin{aligned}
    F_\alpha(\mathbf{\bar x}^*,\mathbf{C}^*)& = \lim_{k\to\infty}(\phi(\mathbf{\bar x}^k,\mathbf{C}^k)+\alpha_k \psi(\mathbf{\bar x}^k,\mathbf{C}^k))
      \geq \lim_{k\to\infty}(\phi(\mathbf{\bar x},\mathbf{C})+\alpha_k \psi(\mathbf{\bar x},\mathbf{C}))\\
     & = \phi(\mathbf{\bar x},\mathbf{C})+\alpha \psi(\mathbf{\bar x},\mathbf{C})=F_\alpha(\mathbf{\bar x},\mathbf{C}).
  \end{aligned}
\end{equation*}
That is, $(\mathbf{\bar x}^*,\mathbf{C}^*)$ is a maximizer of $F_\alpha(\mathbf{\bar x},\mathbf{C})$.
The uniqueness of the maximizer to $F_\alpha(\mathbf{\bar x},\mathbf{C})$ and a standard subsequence
argument imply that the whole sequence converges. The desired continuity now follows by the
continuity of $\psi(\mathbf{\bar x},\mathbf{C})$ in $(\mathbf{\bar x},\mathbf{C})$.
\end{proof}

Next we give an important monotonicity relation for $\psi(\mathbf{\bar x}_\alpha,\mathbf{C}_\alpha)$
in $\alpha$, in a manner similar to classical Tikhonov regularization \cite{ItoJinTakeuchi:2011}.
In Appendix \ref{app:sensitivity}, we show that it is actually strictly monotone.
\begin{lemma}\label{lem:mon}
The functional $\psi(\mathbf{\bar{x}}_\alpha,\mathbf{C}_\alpha)$ is monotonically increasing in $\alpha$.
\end{lemma}
\begin{proof}
This result follows by a standard comparison principle.
For any $\alpha_1,\alpha_2$, by the maximizing property of $(\mathbf{C}_{\alpha_1},\mathbf{\bar x}_{\alpha_1})$
and $(\mathbf{C}_{\alpha_2},\mathbf{\bar x}_{\alpha_2})$, we have
\begin{align*}
		F_{\alpha_1}(\mathbf{\bar{x}}_{\alpha_1},\mathbf{C}_{\alpha_1}) \ge F_{\alpha_1}(\mathbf{\bar{x}}_{\alpha_2},\mathbf{C}_{\alpha_2})\quad \mbox{and}\quad
		F_{\alpha_2}(\mathbf{\bar{x}}_{\alpha_2},\mathbf{C}_{\alpha_2}) \ge F_{\alpha_2}(\mathbf{\bar{x}}_{\alpha_1},\mathbf{C}_{\alpha_1}).
\end{align*}
Summing up these two inequalities and collecting terms yield
\begin{align*}
	(\alpha_1-\alpha_2)[\psi(\mathbf{\bar{x}}_{\alpha_1},\mathbf{C}_{\alpha_1})-\psi(\mathbf{\bar{x}}_{\alpha_2},\mathbf{C}_{\alpha_2})] \ge 0.
\end{align*}
Then the desired monotonicity relation follows.
\end{proof}
\begin{theorem}\label{thm:mono}
For any initial guess $\alpha^1>0$,
the sequence $\{\alpha^k\}$ generated by Algorithm \ref{alg:hyper} is monotonically convergent
to some $\alpha^*\geq0$, and if the limit $\alpha^*>0$, then it satisfies
the fixed point equation \eqref{eq:alpha}.
\end{theorem}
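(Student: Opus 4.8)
The plan is to recast Algorithm~\ref{alg:hyper} as a scalar fixed point iteration and then exploit the monotonicity of the value function $\psi(\mathbf{\bar x}_\alpha,\mathbf{C}_\alpha)$ from Lemma~\ref{lem:mon}. Writing $p:=m+2(a-1)$ (which I take to be positive), and recalling that $\psi(\mathbf{\bar x}_\alpha,\mathbf{C}_\alpha)=-\tfrac12[(\mathbf{\bar x}_\alpha-\bm\mu_0)^t\bar{\mathbf C}_0^{-1}(\mathbf{\bar x}_\alpha-\bm\mu_0)+\mathrm{tr}(\bar{\mathbf C}_0^{-1}\mathbf C_\alpha)]$, the denominator in \eqref{eq:alpha} equals $2(b-\psi(\mathbf{\bar x}_\alpha,\mathbf{C}_\alpha))$, so the M-step reads $\alpha^{k+1}=\Phi(\alpha^k)$ with
\[
   \Phi(\alpha):=\frac{p}{2\,(b-\psi(\mathbf{\bar x}_\alpha,\mathbf{C}_\alpha))}.
\]
Because the E-step returns the \emph{exact} maximizer $(\mathbf{\bar x}_{\alpha^k},\mathbf{C}_{\alpha^k})$ (Theorem~\ref{thm:existence}), this identity is exact, and the whole analysis reduces to studying the scalar map $\Phi$ on $(0,\infty)$.

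First I would record the elementary properties of $\Phi$. Since $\psi(\mathbf{\bar x}_\alpha,\mathbf{C}_\alpha)\le 0$ and $b>0$, the denominator satisfies $b-\psi\ge b>0$, so $\Phi(\alpha)\in(0,\,p/(2b)]$ for every $\alpha>0$; in particular $\Phi$ is positive and uniformly bounded above. By Lemma~\ref{lem:cont}, $\alpha\mapsto\psi(\mathbf{\bar x}_\alpha,\mathbf{C}_\alpha)$ is continuous on $(0,\infty)$, hence so is $\Phi$. The crucial structural property is monotonicity: by Lemma~\ref{lem:mon}, $\psi(\mathbf{\bar x}_\alpha,\mathbf{C}_\alpha)$ is nondecreasing in $\alpha$, so $b-\psi$ is nonincreasing and therefore $\Phi$ is \emph{nondecreasing} in $\alpha$.

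Next I would turn these properties into monotone convergence of $\{\alpha^k\}$. A nondecreasing self-map produces a monotone orbit: comparing the first two iterates, if $\alpha^2=\Phi(\alpha^1)\ge\alpha^1$, then applying $\Phi$ and using its monotonicity gives inductively $\alpha^{k+1}=\Phi(\alpha^k)\ge\Phi(\alpha^{k-1})=\alpha^k$ for all $k$, so the sequence is nondecreasing; the reverse inequality $\alpha^2\le\alpha^1$ yields a nonincreasing sequence. In either case $\{\alpha^k\}$ is monotone. It is also bounded, since all iterates are positive and from $k\ge2$ on they lie below $p/(2b)$. A bounded monotone sequence converges, so $\alpha^k\to\alpha^*$ for some $\alpha^*\ge0$. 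To identify the limit, suppose $\alpha^*>0$; then $\Phi$ is continuous at $\alpha^*$, and passing to the limit in $\alpha^{k+1}=\Phi(\alpha^k)$ gives $\alpha^*=\Phi(\alpha^*)$, i.e.\ $\alpha^*$ solves \eqref{eq:alpha}. If $\alpha^*=0$ one cannot invoke continuity of $\Phi$ at the endpoint, which is precisely why the fixed-point conclusion is stated only for $\alpha^*>0$.

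The main obstacle is really the monotonicity of $\Phi$, but this is inherited entirely from Lemma~\ref{lem:mon}; once it is in hand, the monotone-orbit induction and the monotone convergence theorem are routine. The one point requiring care is the uniform upper bound $\Phi\le p/(2b)$ used for boundedness in the increasing case: it genuinely relies on $b>0$ (equivalently, on $\psi(\mathbf{\bar x}_\alpha,\mathbf{C}_\alpha)$ being bounded below uniformly in $\alpha$), because Lemma~\ref{lem:bdd} only furnishes $\alpha$-dependent bounds and in fact $\psi(\mathbf{\bar x}_\alpha,\mathbf{C}_\alpha)\to0$ as $\alpha\to\infty$; without $b>0$ an increasing iteration could in principle escape to $+\infty$.
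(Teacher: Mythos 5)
Your proposal is correct and follows essentially the same route as the paper: the paper computes $\alpha^{k+1}-\alpha^k$ explicitly as a positive multiple of $\psi(\mathbf{\bar x}_{\alpha^k},\mathbf{C}_{\alpha^k})-\psi(\mathbf{\bar x}_{\alpha^{k-1}},\mathbf{C}_{\alpha^{k-1}})$ and invokes Lemma \ref{lem:mon} to conclude sign preservation, which is exactly your statement that the update map $\Phi$ is nondecreasing and hence generates a monotone orbit; the uniform bound $\alpha^k\le\frac{m+2(a-1)}{2b}$ and the passage to the limit via Lemma \ref{lem:cont} are likewise identical. Your explicit remark that the boundedness genuinely requires $b>0$ is a fair clarification of a hypothesis the paper uses implicitly.
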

\begin{proof}
By the fixed point iteration \eqref{eq:alpha}, we have (with $c=\frac{m}{2}+a-1$)
	\begin{equation*}
	\begin{split}
		\alpha^{k+1}-\alpha^{k} &= \frac{c}{-\psi(\mathbf{\bar{x}}_{\alpha^{k}},\mathbf{C}_{\alpha^{k}})+b} - \frac{c}{-\psi(\mathbf{\bar{x}}_{\alpha^{k-1}},\mathbf{C}_{\alpha^{k-1}})+b}\\ &=\frac{c[\psi(\mathbf{\bar{x}}_{\alpha^{k}},\mathbf{C}_{\alpha^{k}})-\psi(\mathbf{\bar{x}}_{\alpha^{k-1}},\mathbf{C}_{\alpha^{k-1}})]}{(-\psi(\mathbf{\bar{x}}_{\alpha^{k}},\mathbf{C}_{\alpha^{k}})+b)(-\psi(\mathbf{\bar{x}}_{\alpha^{k-1}},\mathbf{C}_{\alpha^{k-1}})+b)}.
	\end{split}
	\end{equation*}
Since $\psi\leq 0$, the denominator is positive.
By Lemma \ref{lem:mon}, $\alpha^{k+1}-\alpha^{k}$
and $\alpha^k-\alpha^{k-1}$ have the same sign, and thus
 $\{\alpha^k\}$ is monotone. Further, for all $\alpha^k$, we have
$0 \le \alpha^k \le \frac{m+2(a-1)}{2b},$
i.e., $\{\alpha^k\}$ is uniformly bounded. Thus
$\{\alpha^k\}$ is convergent. By Lemma   \ref{lem:cont}, $\psi(\mathbf{\bar{x}}_\alpha,\mathbf{C}_\alpha)$ is
continuous in $\alpha$ for $\alpha>0$, and  $\alpha^*$ satisfies \eqref{eq:alpha}.
\end{proof}

\begin{remark}
The proof of Theorem \ref{thm:mono} provides a constructive approach to the existence of a solution to \eqref{eq:alpha}.
The uniqueness of the solution $\alpha^*$ to \eqref{eq:alpha} is generally not ensured. However, in practice, it
seems to have only two fixed points: one is in the neighborhood of $+\infty$, which is uninteresting, and the other
is the desired one.
\end{remark}

\section{Numerical experiments and discussions}\label{sec:numer}
Now we present numerical results to examine algorithmic features (Sections \ref{ssec:iter}--\ref{ssec:mcmc},
with the example \texttt{phillips}) and to illustrate the VGA (Section \ref{ssec:recon}). All one-dimensional examples are taken from
public domain \texttt{MATLAB} package \texttt{Regutools}\footnote{\url{http://www.imm.dtu.dk/~pcha/Regutools/}, last
accessed on April 15, 2017}, and the discrete problems are of size $100\times 100$. We refer the prior with a zero mean $\bm \mu_0=\mathbf{0}$
and the covariance $\alpha^{-1}\mathbf{I}_m$ and $\alpha^{-1}\mathbf{L}_1^{-1}\mathbf{L}_1^{-t}$ (with $\mathbf{L}_1$
being the 1D first-order forward difference matrix) to as the $L^2$- and $H^1$-prior, respectively, and let
$\bar {\mathbf{C}}_{0}=\mathbf{I}_m$, and $\bar{\mathbf C}_1=\mathbf{L}_1^{-1}\mathbf{L}_1^{-t}$. Unless otherwise
specified, the parameter $\alpha$ is determined in a trial-and-error manner, and in Algorithm
\ref{alg:vb}, the Newton update $\delta\mathbf{\bar x}$ in \eqref{eqn:iter-barx} is computed by
the \texttt{MATLAB} built-in function \texttt{pcg} with a default tolerance, the prior covariance $\mathbf{C}^{-1}_0$
as the preconditioner and a maximum $10$ PCG iterations.

\subsection{Convergence behavior of inner and outer iterations of Algorithm \ref{alg:vb}}\label{ssec:iter}

First, we examine the convergence behavior of inner iterations for updating $\mathbf{\bar x}$ and $\mathbf{C}$,
i.e., \eqref{eqn:iter-barx} and \eqref{eqn:iter-C}, for the example \texttt{phillips} with the $L^2$-prior
$\mathbf{C}_0=1.0\times 10^{-1}\mathbf{\bar C}_0$ and $H^1$-prior $\mathbf{C}_0=2.5\times10^{-3}\mathbf{\bar C}_1$.
To study the convergence, we fix $\mathbf{C}$ at $\mathbf{C}^1=\mathbf{I}$ for $\mathbf{\bar x}$
and present the $\ell^2$-norm of the update $\delta\mathbf{\bar x}$ (initialized with $\mathbf{\bar x}^0=\mathbf{0}$),
and similarly fix $\mathbf{\bar x}$ at the converged iterate $\mathbf{\bar x}^1$ for $\mathbf{C}$ and present
the spectral norm of the change $\delta\mathbf{C}$. For both \eqref{eqn:iter-barx} and \eqref{eqn:iter-C},
these initial guesses are quite far away from the solutions, and thus the choice allows showing their global
convergence behavior. The convergence is fairly rapid and steady for both inner iterations, cf. Fig. \ref{fig:inner}.
For example, for a tolerance $10^{-5}$, the Newton method \eqref{eqn:iter-barx} converges after about 10 iterations,
and the fixed point method \eqref{eqn:iter-C} converges after $4$ iterations,
respectively. The global as well as local superlinear convergence of the Newton method \eqref{eqn:iter-barx} are
clearly observed, confirming the discussions in Section \ref{sec:algorithm}. The convergence behavior of
the inner iterations is similar for both priors. In practice, it is unnecessary to solve the inner iterates to a
very high accuracy, and it suffices to apply a few inner updates within each outer iteration. Since the iteration
\eqref{eqn:iter-C} often converges faster than \eqref{eqn:barx}, we take five Newton
updates and one fixed point update per outer iteration for the numerical experiments below.

\begin{figure}[htb!]
\centering
\begin{tabular}{cc}
\includegraphics[scale=0.45]{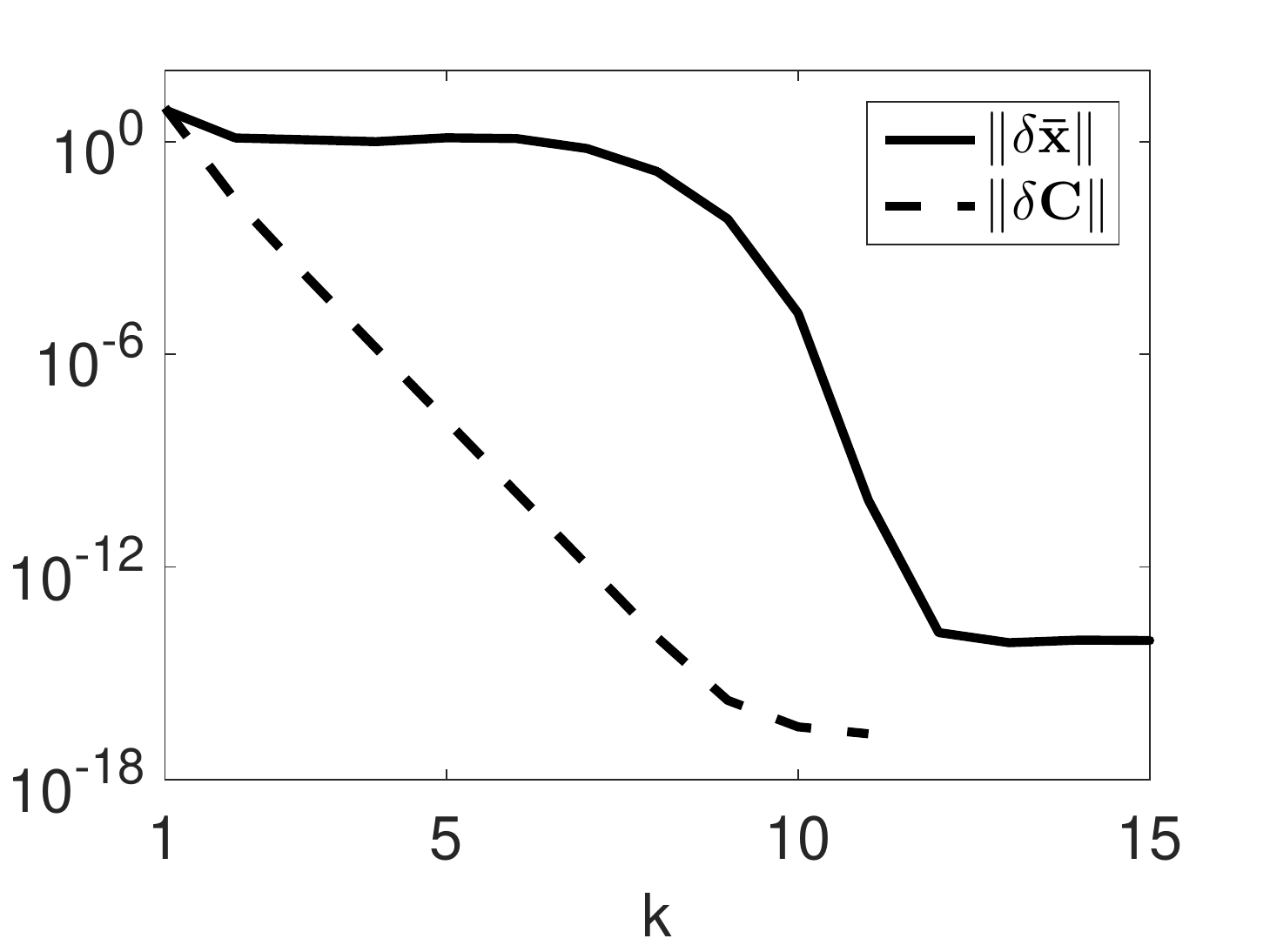} & \includegraphics[scale=0.45]{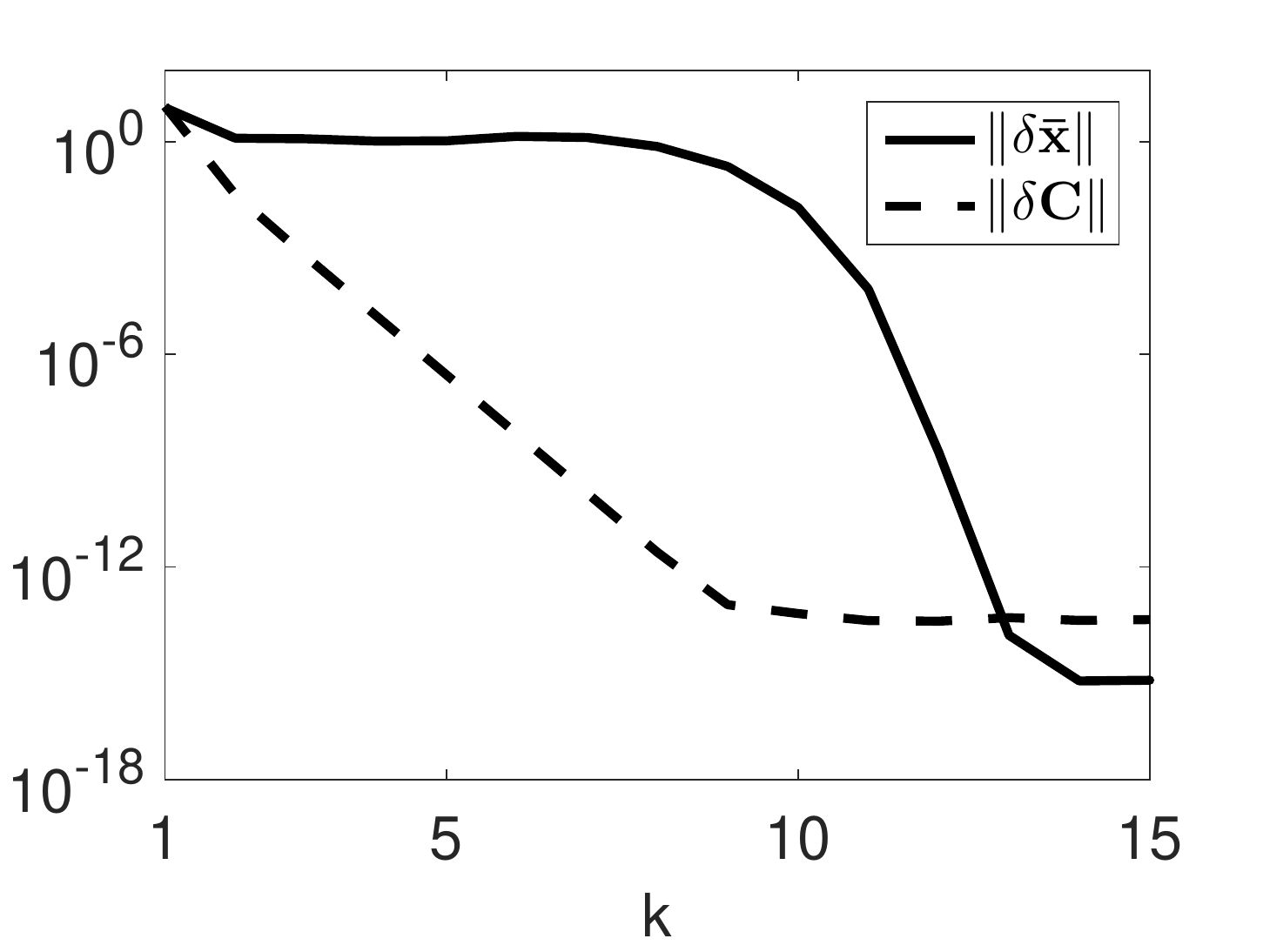}\\
(a) $L^2$-prior & (b) $H^1$-prior
\end{tabular}
\caption{The convergence of the inner iterations of Algorithm \ref{alg:vb} for \texttt{phillips}.\label{fig:inner}}
\end{figure}

\begin{figure}[hbt!]
\centering
\begin{tabular}{cc}
\includegraphics[scale=0.45]{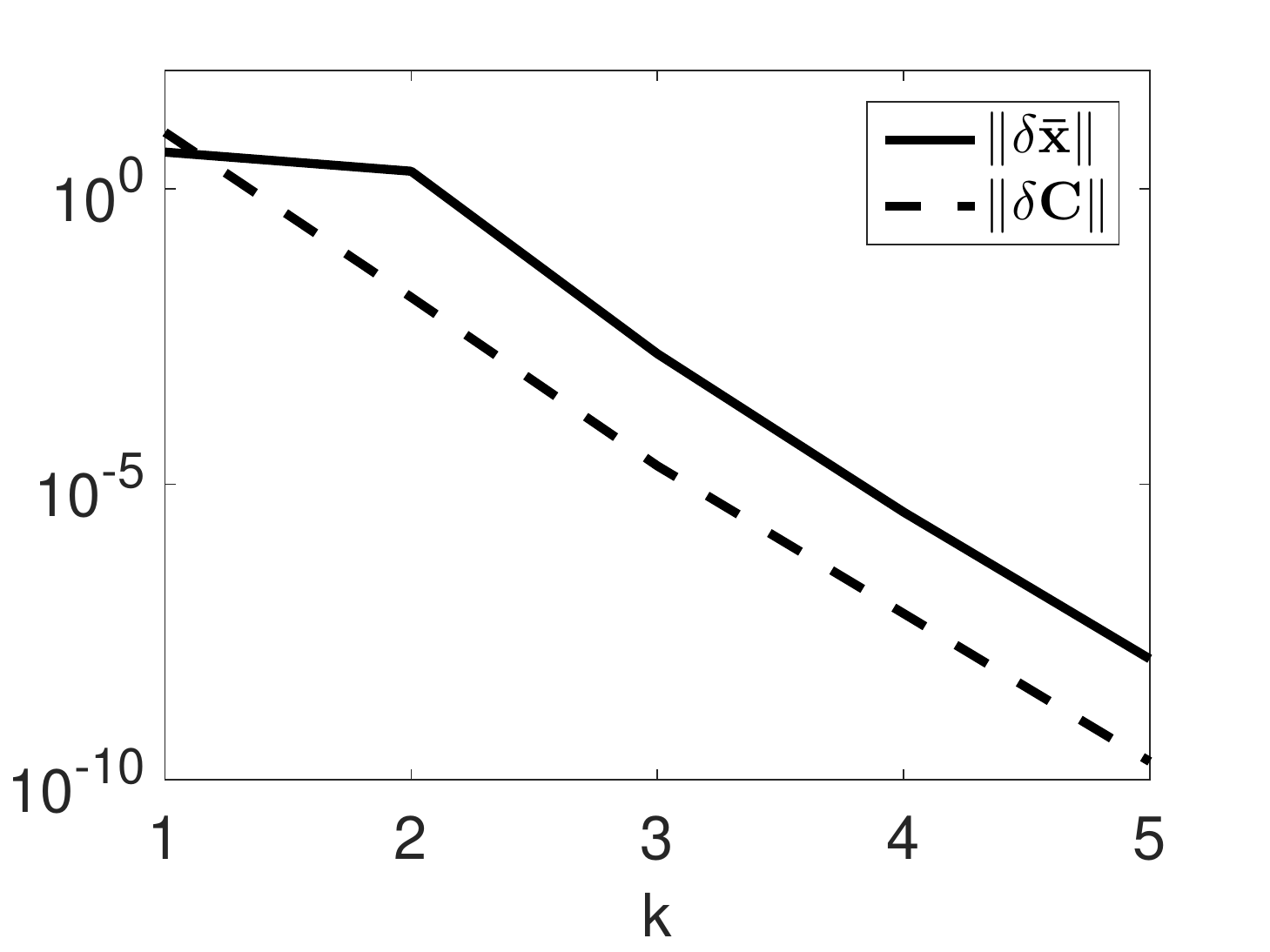} & \includegraphics[scale=0.45]{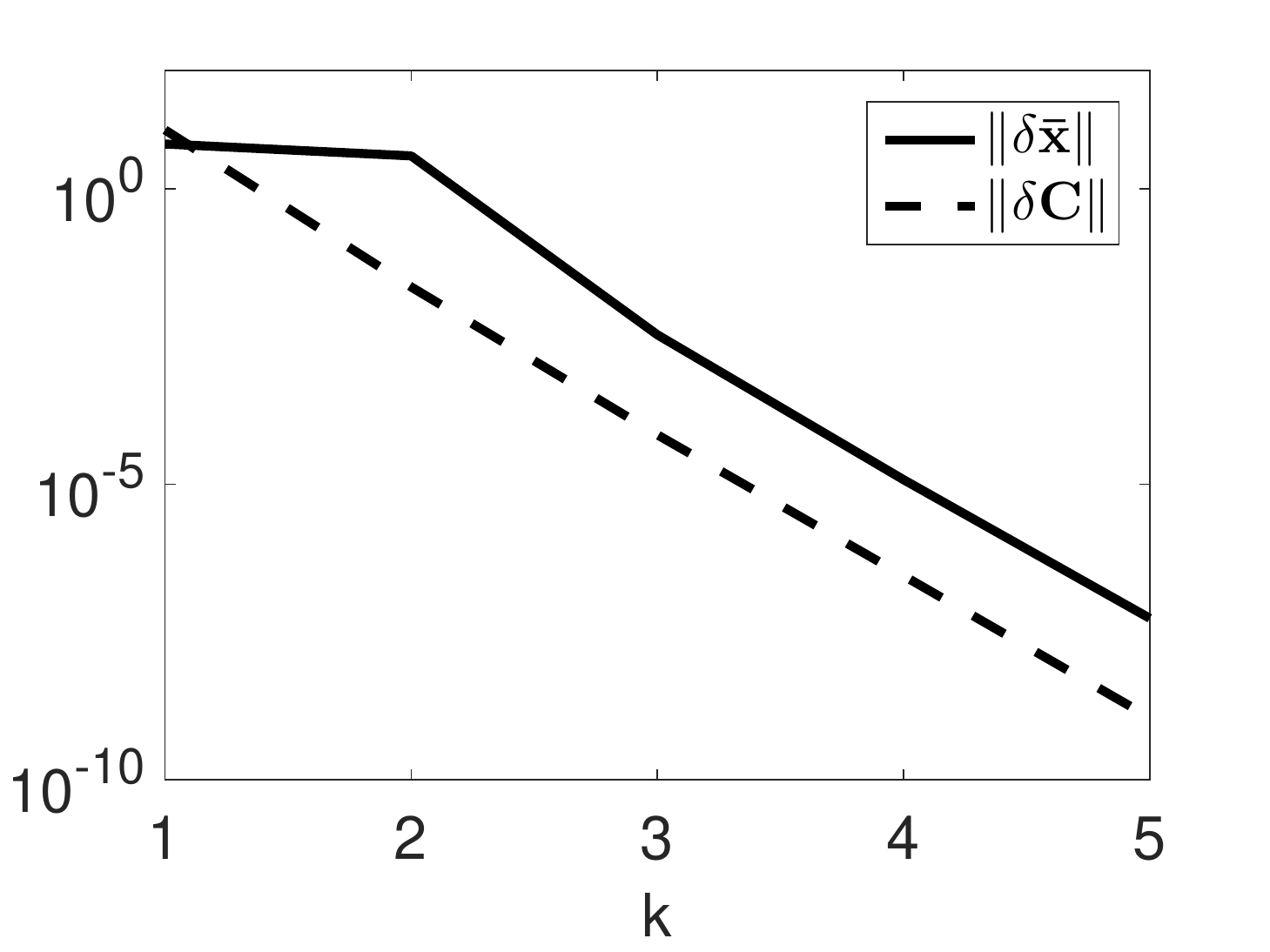}\\
(a) $L^2$ prior & (b) $H^1$-prior
\end{tabular}
\caption{The convergence of outer iterations of Algorithm \ref{alg:vb} for \texttt{phillips}.}\label{fig:outer}
\end{figure}

\begin{figure}[hbt!]
\centering
\begin{tabular}{cc}
\includegraphics[scale=0.45]{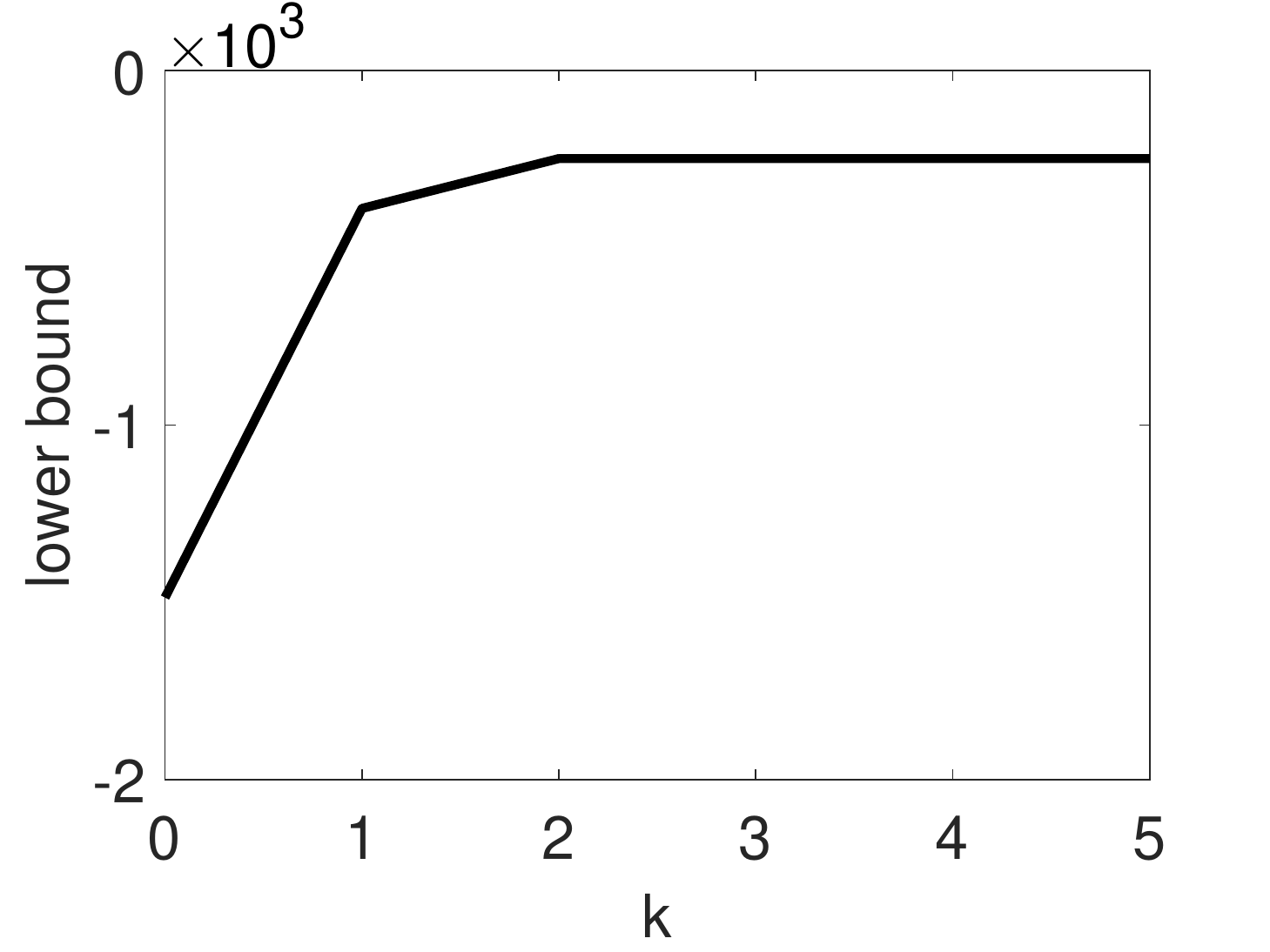} &	\includegraphics[scale=0.45]{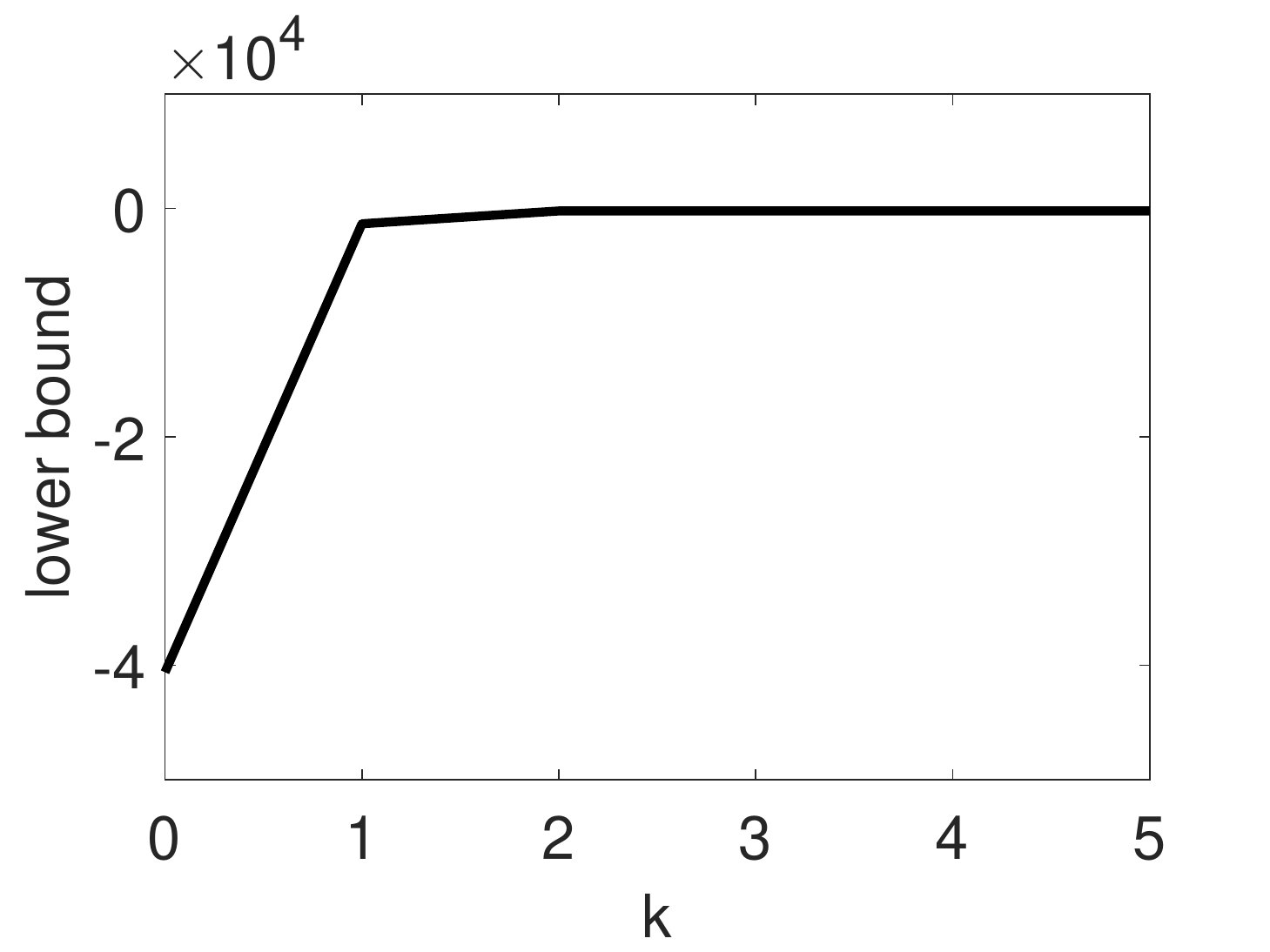}\\
(a) $L^2$-prior & (b) $H^1$-prior
\end{tabular}
\caption{The convergence of the lower bound $F(\mathbf{\bar x},\mathbf{C})$ for \texttt{phillips}.	\label{fig:1dconv}}
\end{figure}

To examine the convergence of outer iterations, we show the errors of the mean $\mathbf{\bar x}$ and
covariance $\mathbf{C}$ and the lower bound $F(\mathbf{\bar x},\mathbf{C})$ in Figs. \ref{fig:outer}
and \ref{fig:1dconv}, respectively. Algorithm \ref{alg:vb} is terminated when the change of the lower
bound falls below $10^{-10}$. For the $L^2$-prior,
Algorithm \ref{alg:vb} converges after $5$ iterations and the last increments $\delta\mathbf{\bar x}$ and
$\delta\mathbf{C}$ are of order $10^{-8}$ and $10^{-9}$, respectively. This observation holds also
for the $H^1$-prior, cf. Figs. \ref{fig:outer}(b) and \ref{fig:1dconv}(b). Thus, both inner and outer
iterations converge rapidly and steadily, and Algorithm \ref{alg:vb} is very efficient.

\subsection{Low-rank approximation of $\mathbf{A}$ and sparsity of $\mathbf{C}$}\label{ssec:low-rank}

The discussions in Section \ref{ssec:complexity} show that the structure on $\mathbf{A}$ and $\mathbf{C}$ can
be leveraged to reduce the complexity of Algorithm \ref{alg:vb}. Now we evaluate their
influence on the accuracy of the VGA.

First, we examine the influence of low-rank approximation to $\mathbf{A}$. Since the kernel function of
the example \texttt{phillips} is smooth, the inverse problem is mildly ill-posed and the singular values
$\sigma_k$ decay algebraically, cf. Fig. \ref{fig:low-rank}(a). A low-rank matrix $\mathbf{A}_r$
of rank $r\approx10$ can already approximate $\mathbf{A}$ well. To study its influence
on the VGA, we denote by $(\mathbf{\bar x}_r,\mathbf{C}_r)$ and $(\mathbf{\bar x}^*,\mathbf{C}^*)$ the VGA
for $\mathbf{A}_r$ and $\mathbf{A}$, respectively. The errors $e_{\mathbf{\bar x}} =
\|\mathbf{\bar x}_r-\mathbf{\bar x}^*\|$ and $e_{\mathbf{C}}=\|\mathbf{C}_r-\mathbf{C}^*\|$ for
different ranks $r$ are shown in Figs. \ref{fig:low-rank} (b) and (c) for
the $L^2$- and $H^1$-prior, respectively. Too small a rank $r$ of
the approximation $\mathbf{A}_r$ can lead to pronounced errors in both the mean
$\mathbf{\bar x}$ and the covariance $\mathbf{C}$, whereas for a rank of $r= 10$, the errors already fall below
one percent. Interestingly, the decay of the error $e_{\mathbf{\bar x}}$ is much faster than that
of the singular values $\sigma_k$, and the error $e_\mathbf{C}$ decays slower than $e_{\mathbf{\bar x}}$.
The fast decay of the errors $e_{\mathbf{\bar x}}$ and $e_\mathbf{C}$ indicates the robustness of the VGA,
which justifies using low-rank approximations in Algorithm \ref{alg:vb}.

\begin{figure}[h]
\centering
\setlength{\tabcolsep}{0mm}
\begin{tabular}{ccc}
\includegraphics[scale=0.35]{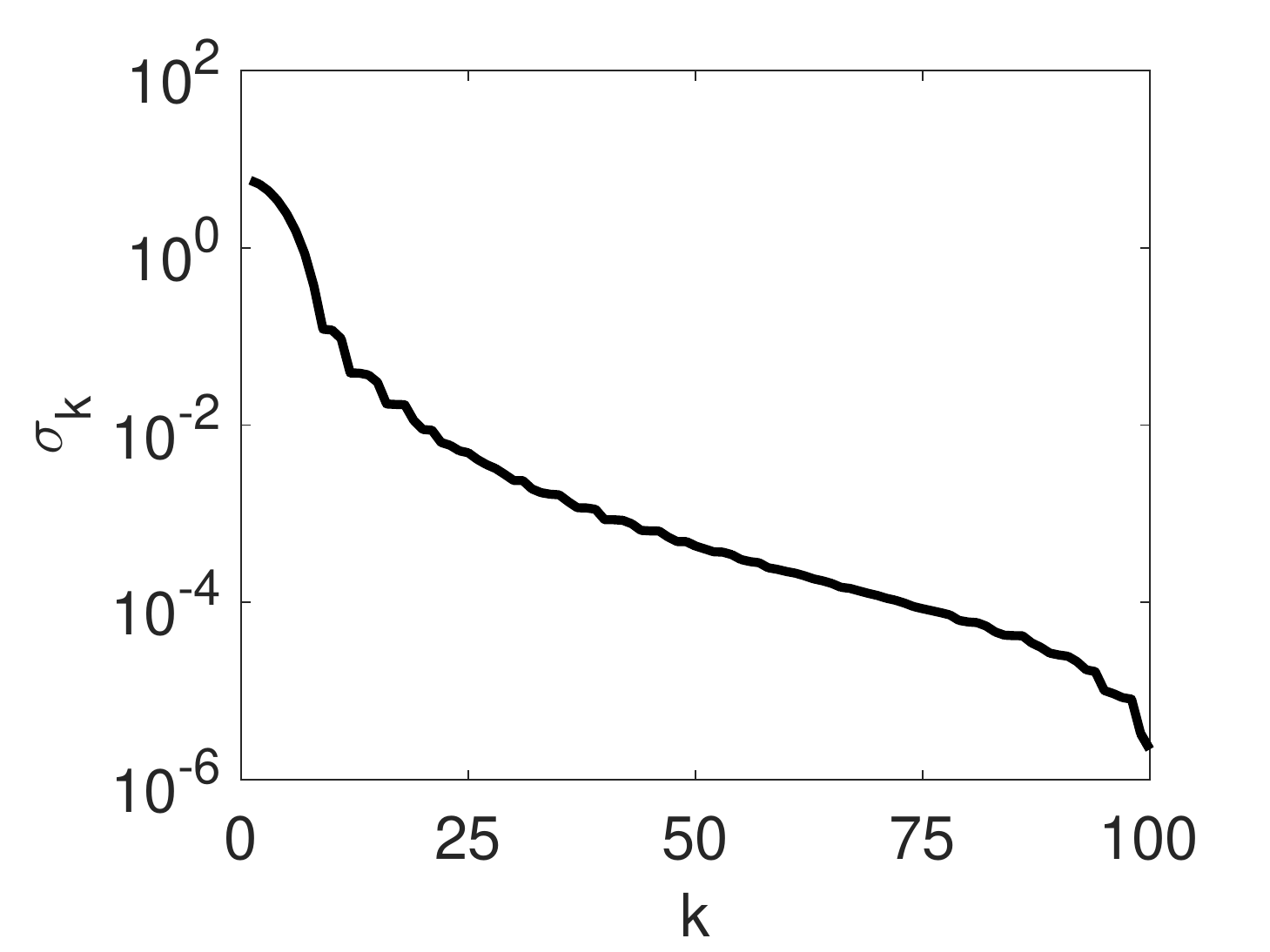}&\includegraphics[scale=0.35]{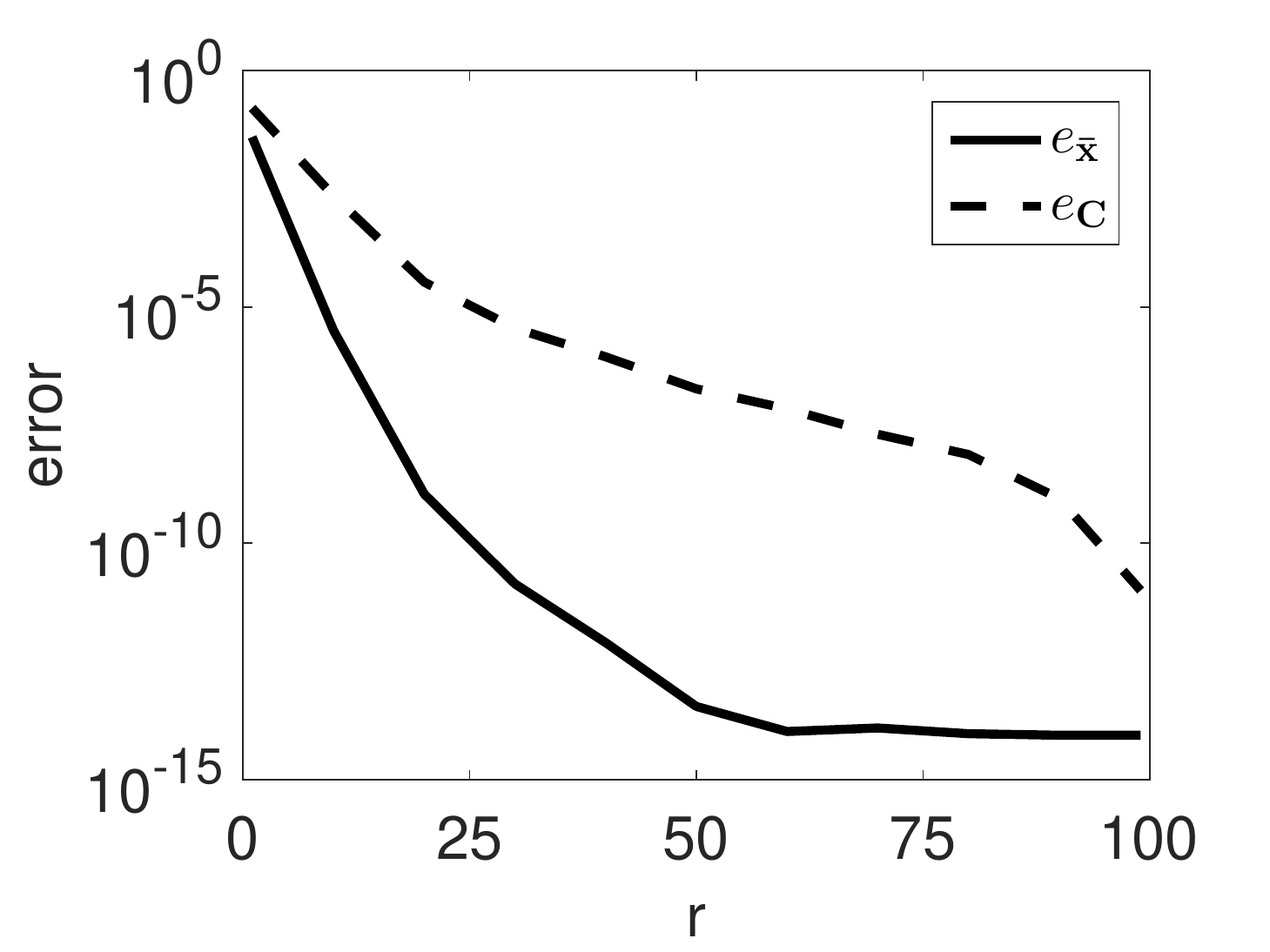}&\includegraphics[scale=0.35]{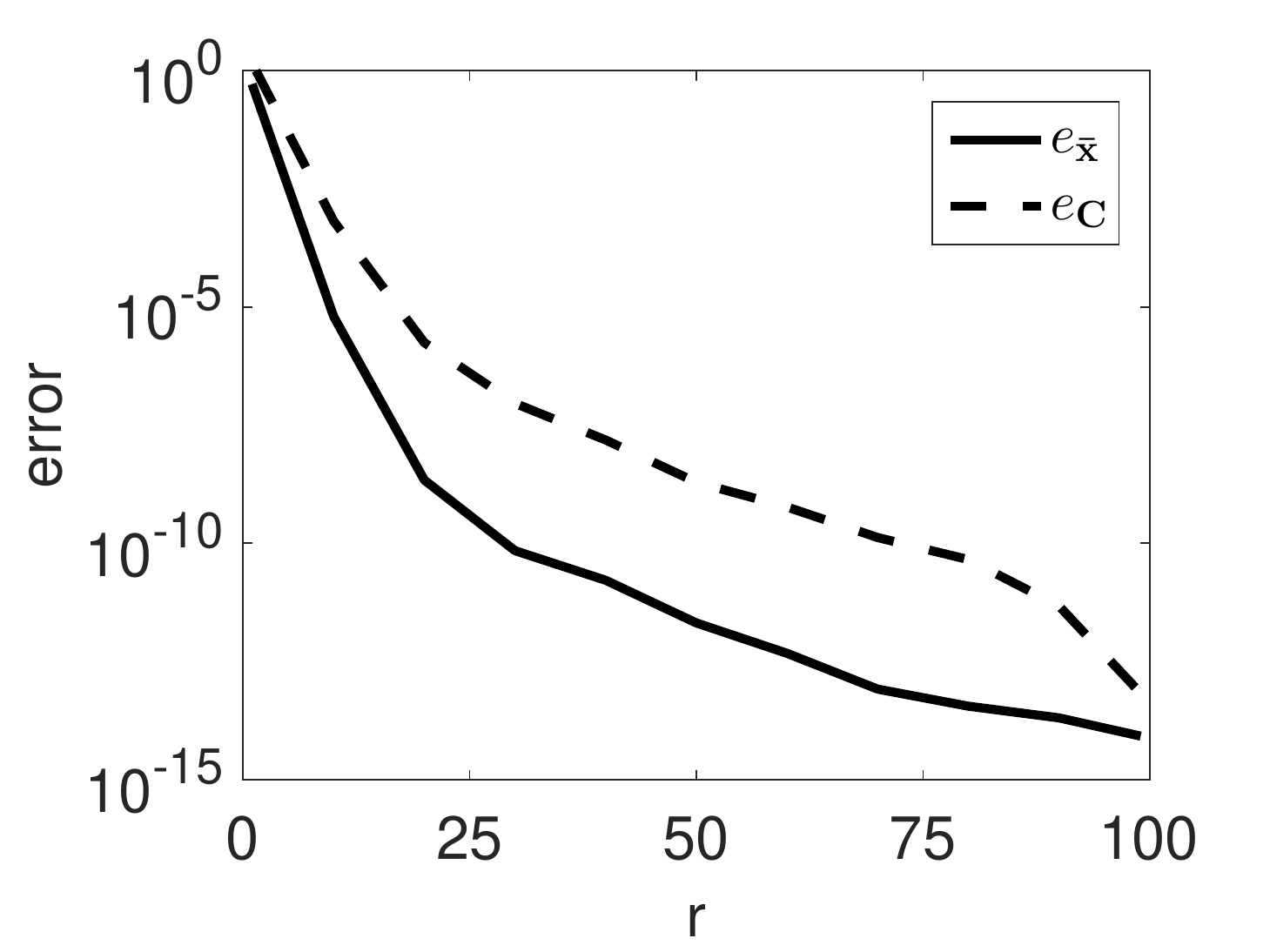}\\
(a) singular values $\sigma_k$& (b) $L^2$-prior & (c) $H^1$-prior
\end{tabular}
\caption{(a) singular values and (b)--(c): the errors of the mean and covariance for \texttt{phillips}. \label{fig:low-rank}}
\end{figure}

Next we examine the influence of the sparsity assumption on the covariance $\mathbf{C}$, which is used to reduce
the complexity of Algorithm \ref{alg:vb}. Due to the coupling between $\mathbf{\bar x}$ and $\mathbf{C}$,
cf. \eqref{eqn:barx}--\eqref{eqn:C}, the sparsity assumption on $\mathbf{C}$ affects the accuracy of both
$\mathbf{\bar x}$ and $\mathbf{C}$. To illustrate this, we take different sparsity levels $s$ on $\mathbf{C}$
in Algorithm \ref{alg:vb}, i.e., at most $s$ nonzero entries around the
diagonal of $\mathbf{C}$. Surprisingly, a diagonal $\mathbf{C}$ already gives an acceptable approximation measured by
the errors $e_{\mathbf{\bar x}}=\|\mathbf{\bar x}_s-\mathbf{\bar x}^*\|_2$ and $e_\mathbf{C}=\|\mathbf{C}_s-\mathbf{C}^*\|_2$,
where $(\mathbf{\bar x}_s,\mathbf{C}_s)$ is the VGA with a sparsity level $s$. The
errors $e_{\mathbf{\bar x}}$ and $e_\mathbf{C}$ decrease with the sparsity level $s$, cf. Table \ref{tab:err-sparsity}.
Thus the sparsity assumption on $\mathbf{C}$ can reduce significantly the complexity while retaining the accuracy.

\begin{table}[htbp]
 \caption{The errors $e_{\mathbf{\bar x}}$ and $e_\mathbf{C}$
 v.s. the sparsity level $s$ of $\mathbf{C}$ for
 \texttt{phillips}. \label{tab:err-sparsity}}
 \label{tab:C_sparse}
 \centering
 \begin{tabular}{c|cc|cc}
  \hline
   prior & \multicolumn{2}{|c|}{$L^2$ prior} & \multicolumn{2}{|c}{ $H^1$ prior}\\
  $s$    & $e_{\mathbf{\bar x}}$ & $e_{\mathbf{C}}$ & $e_{\mathbf{\bar x}}$ & $e_{\mathbf{C}}$\\
  \midrule
  $1$  & 6.38e-2 & 9.20e-2 & 1.92e-2 & 7.06e-2 \\
  $3$  & 5.62e-2 & 8.10e-2 & 1.27e-2 & 5.42e-2\\
  $5$  & 4.88e-2 & 7.02e-2 & 1.00e-2 & 4.29e-2\\
  \hline
 \end{tabular}
\end{table}

\subsection{Hierarchical parameter choice}\label{ssec:param}

Now we examine the convergence of Algorithm \ref{alg:hyper} for choosing the parameter $\alpha$ in
the prior $p(\mathbf{x})$. By Theorem \ref{thm:mono}, the sequence $\{\alpha^k\}$ generated by Algorithm
\ref{alg:hyper} is monotone. We illustrate this by two initial guesses, i.e., $\alpha^1=0.1$ and
$\alpha^1=10$. Both sequences of iterates generated by Algorithm \ref{alg:hyper} converge monotonically to the limit
$\alpha^* = 0.7778$, and the convergence of Algorithm \ref{alg:hyper} is fairly steady, cf. Fig.
\ref{fig:conv_a}(a). Further, Algorithm \ref{alg:hyper} indeed maximizes the joint lower bound
\eqref{eq:hyperbd} with its maximum attained at $\alpha^*=0.7778$, cf. Fig. \ref{fig:conv_a}(b). Though
not shown, the lower bound $F_\alpha(\mathbf{\bar x},\mathbf{c}|\alpha)$ is also increasing during the
iteration. Thus, the hierarchical approach is
indeed performing model selection by maximizing ELBO.

\begin{figure}[hbt!]
\centering
\begin{tabular}{cc}
\includegraphics[scale=0.45]{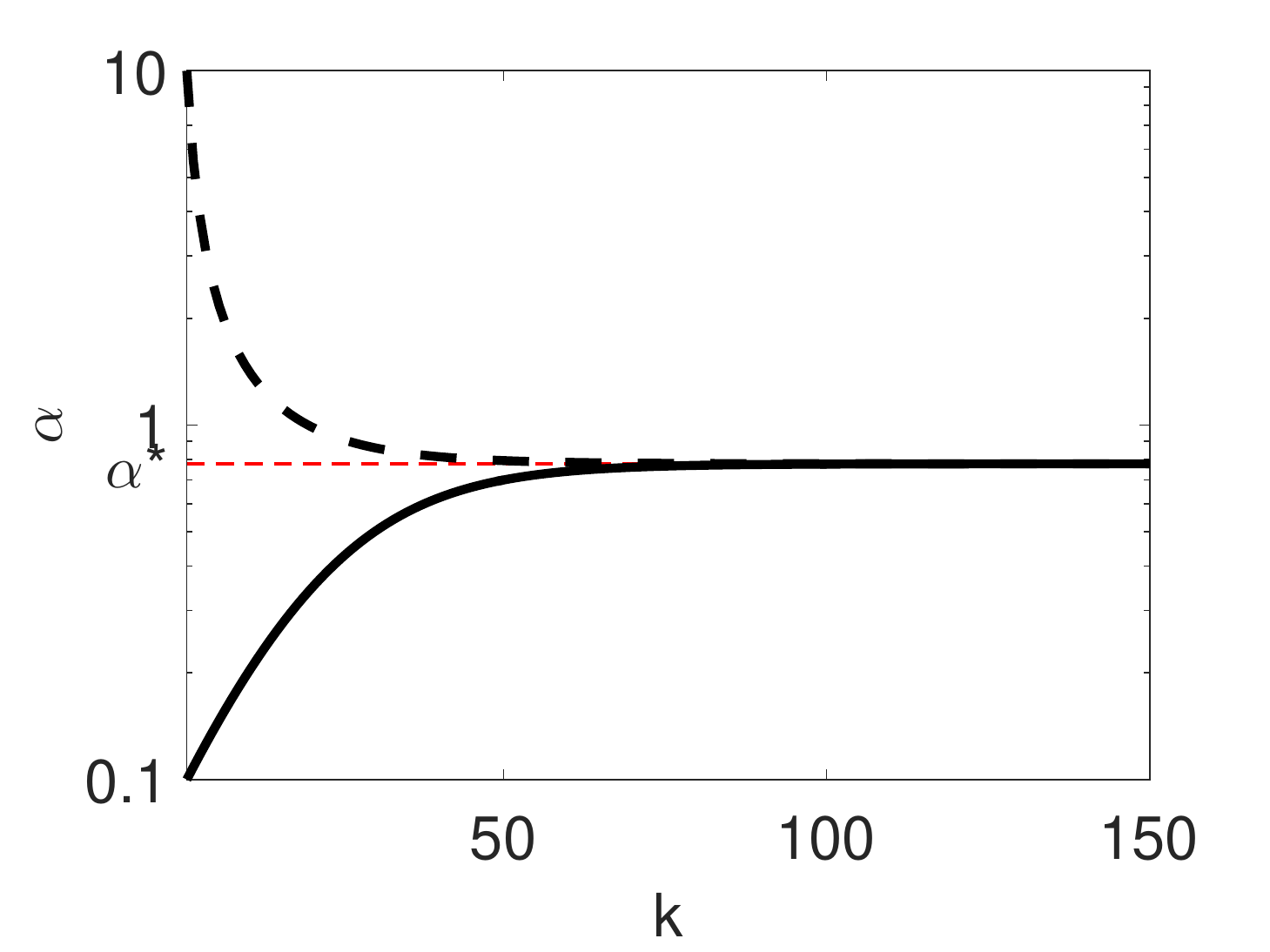} & \includegraphics[scale=0.45]{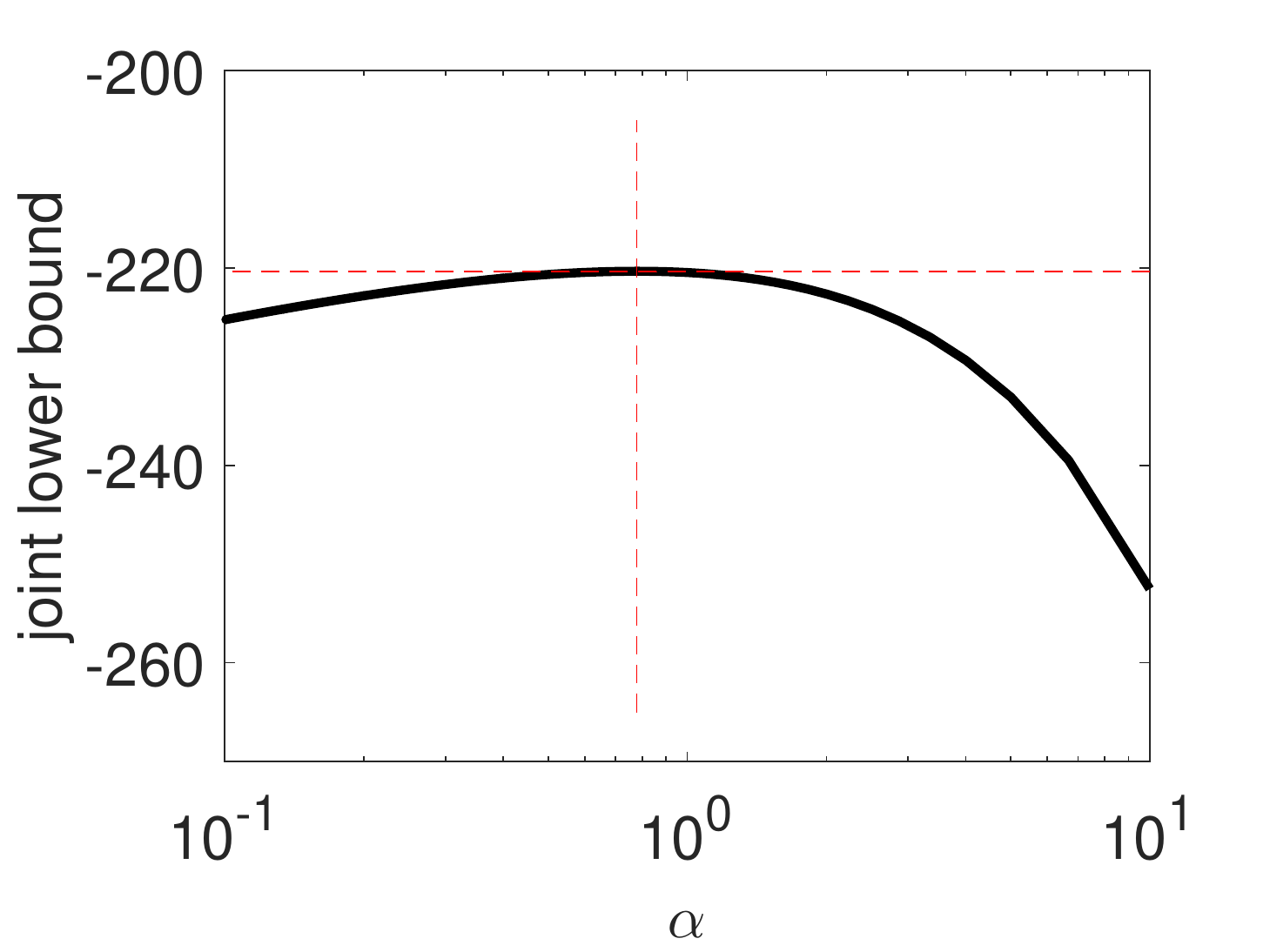}\\
(a) convergence of $\alpha$  & (b) joint lower bound
\end{tabular}
\caption{(a)The convergence of Algorithm \ref{alg:hyper} initialized with $0.1$ and $10$, both
convergent to $\alpha^*=0.7778$ (b)
the joint lower bound versus $\alpha$, for \texttt{phillips}
with $L^2$-prior.\label{fig:conv_a}}
\end{figure}

To illustrate the quality of the automatically chosen parameter $\alpha$, we take
six realizations of the Poisson data $\mathbf{y}$ and compare the mean $\mathbf{\bar x}$ of the
VGA with the optimal regularized solutions, where $\alpha$
is tuned so that the error is smallest (and thus it is infeasible in practice).
The means $\mathbf{\bar x}$ by Algorithm \ref{alg:hyper} are comparable with the optimal ones,
cf. Fig. \ref{fig:hypertest}, and thus the hierarchical approach can yield reasonable approximations.
The parameter $\alpha$ by the hierarchical approach is slightly smaller than the optimal one, cf. Table
\ref{tab:alpha}, and hence the corresponding
reconstruction tends to be slightly more oscillatory than the optimal one. The
value of the parameter $\alpha$ by the hierarchical approach is
relatively independent of the realization, whose precise mechanism is to be ascertained.

\begin{figure}[hbt!]
\centering
\setlength{\tabcolsep}{0mm}
\begin{tabular}{ccc}
\includegraphics[scale=0.35]{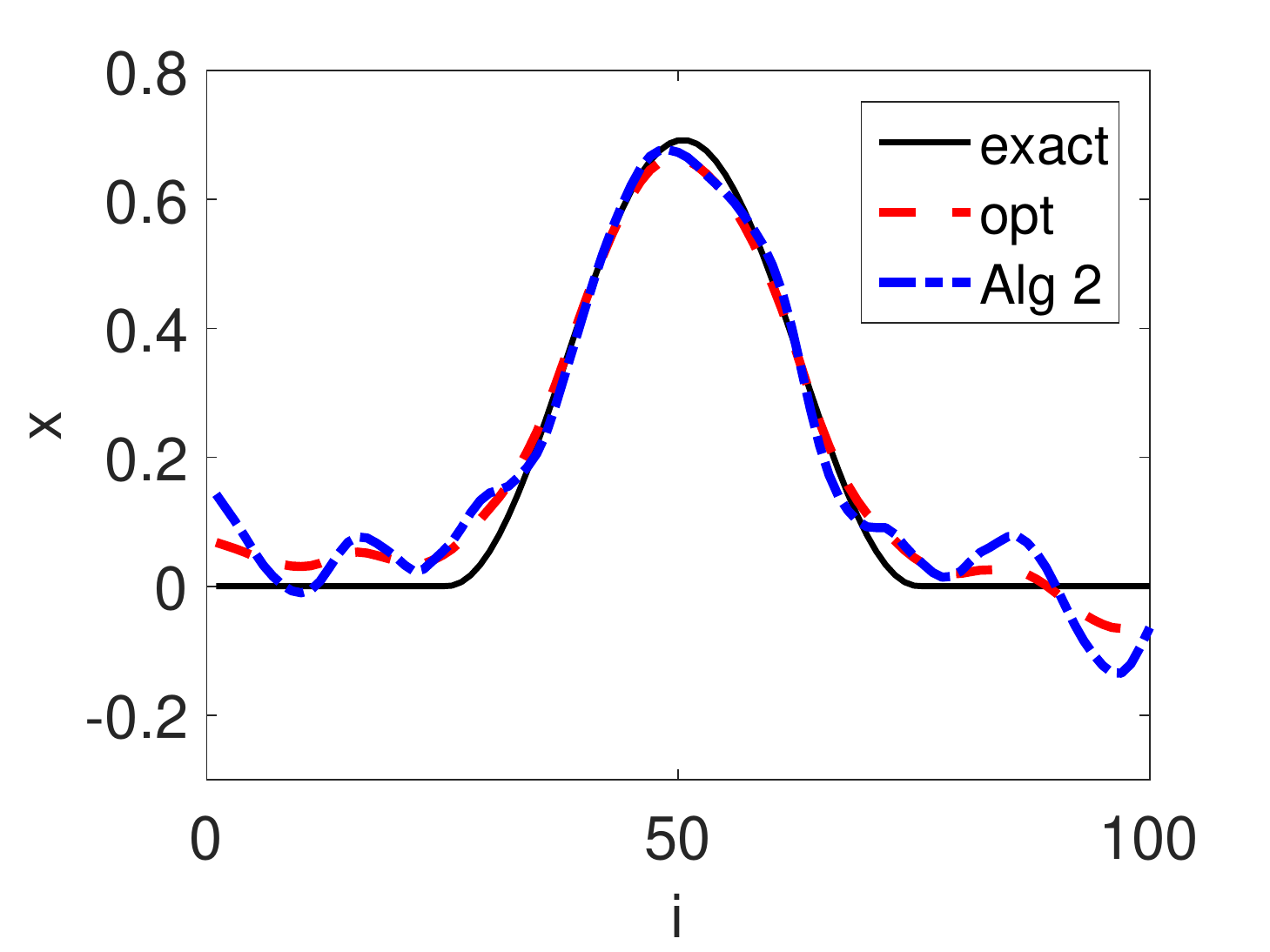} & \includegraphics[scale=0.35]{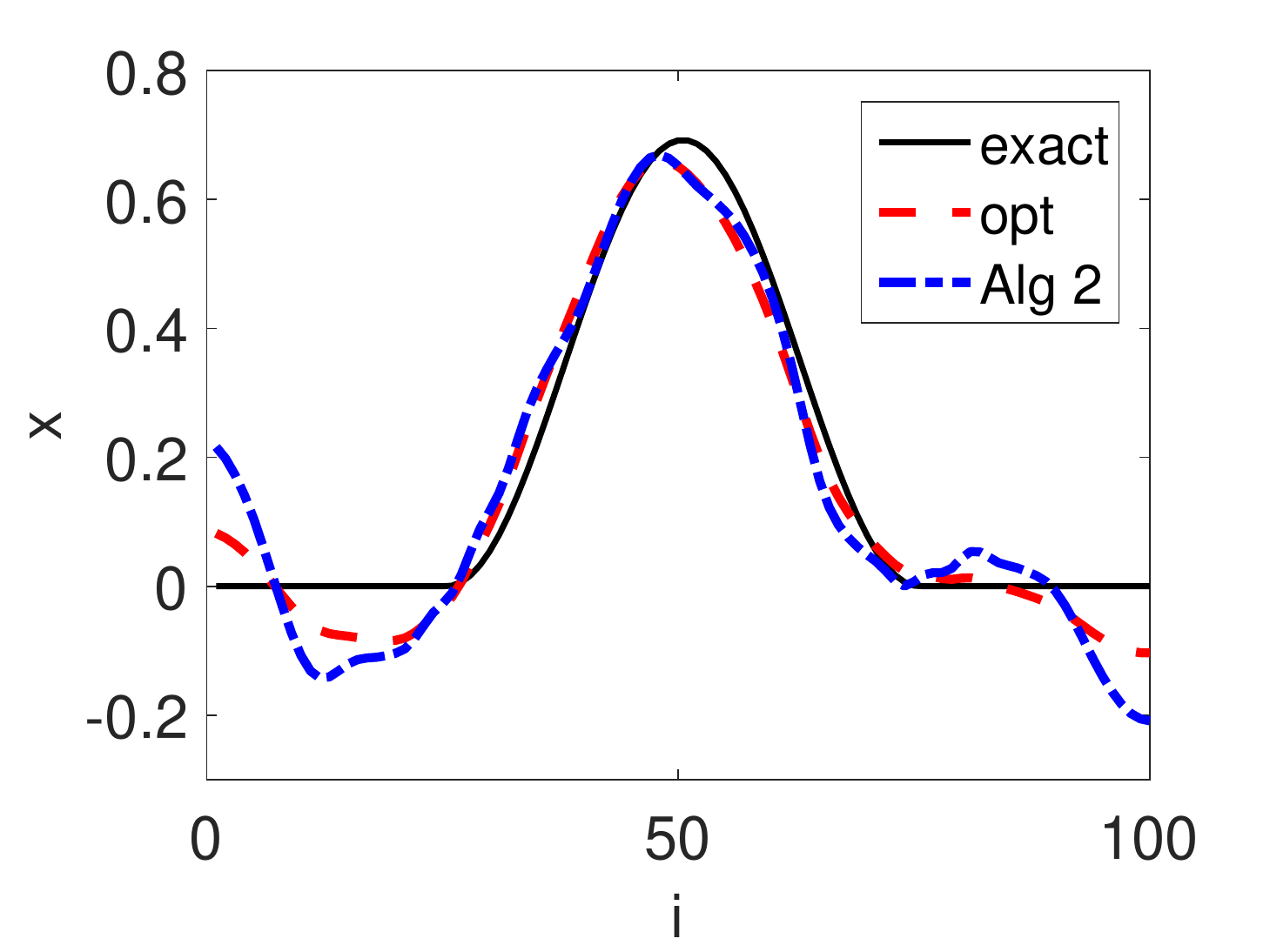} & \includegraphics[scale=0.35]{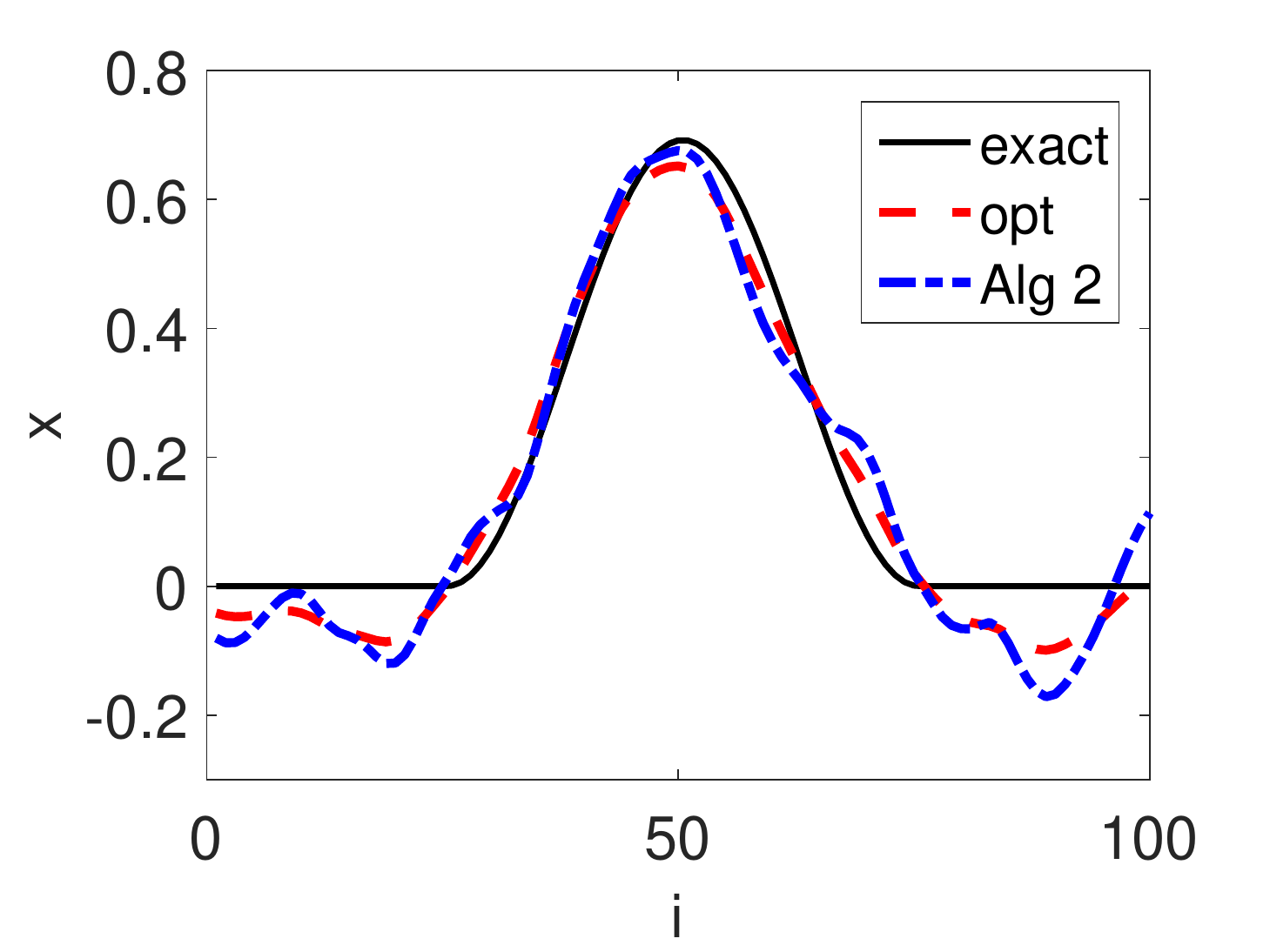}\\
\includegraphics[scale=0.35]{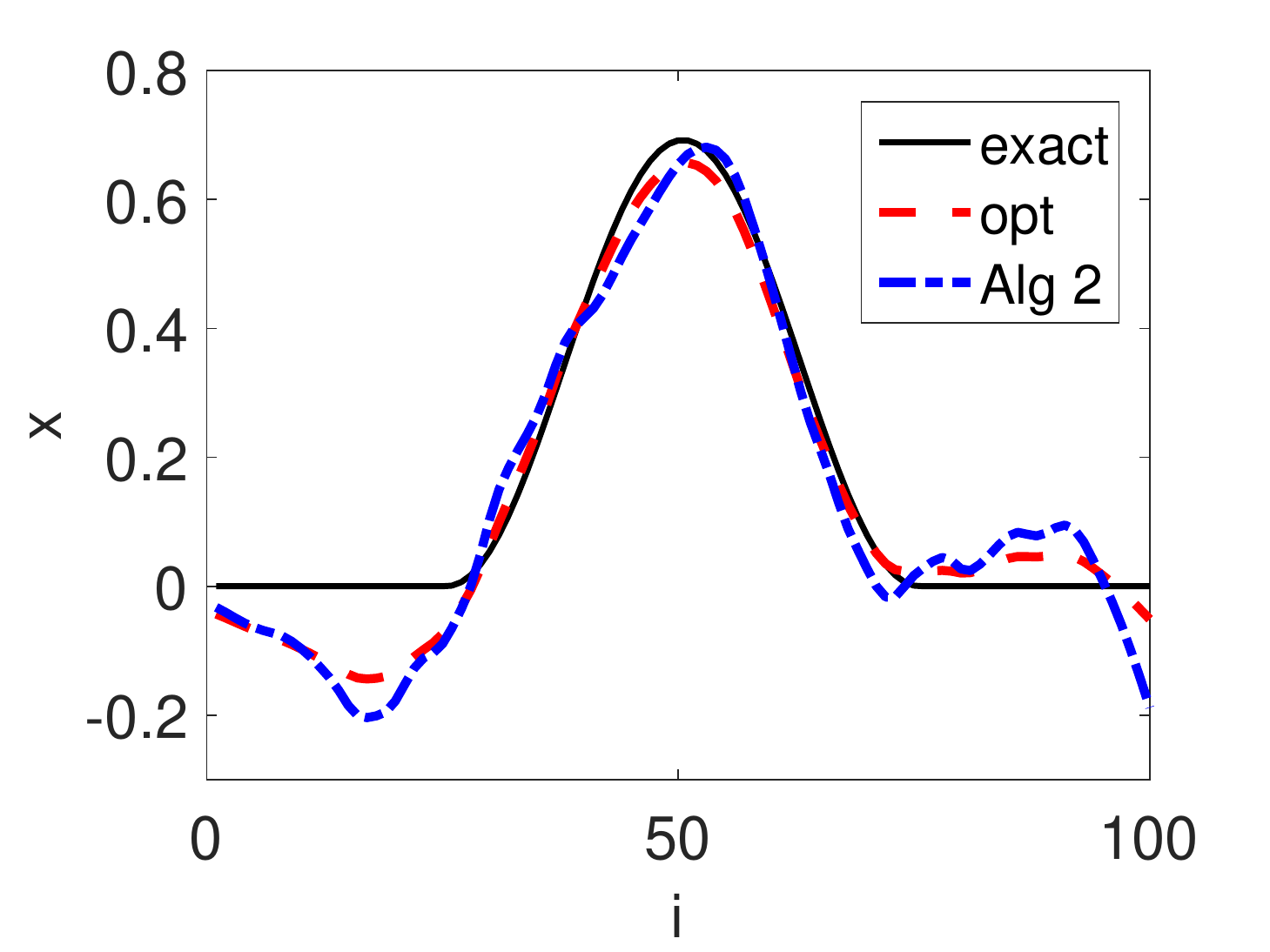} & \includegraphics[scale=0.35]{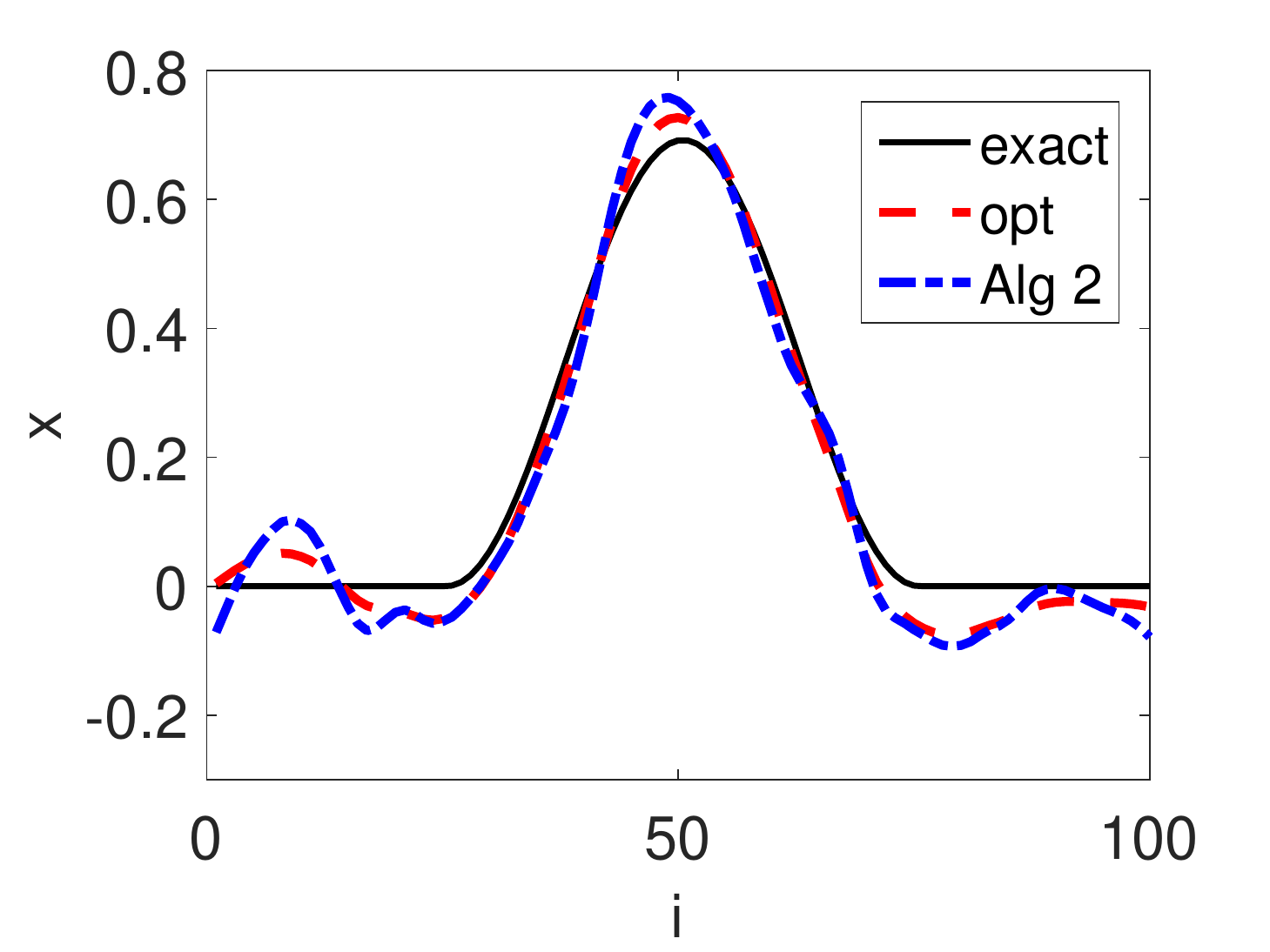} & \includegraphics[scale=0.35]{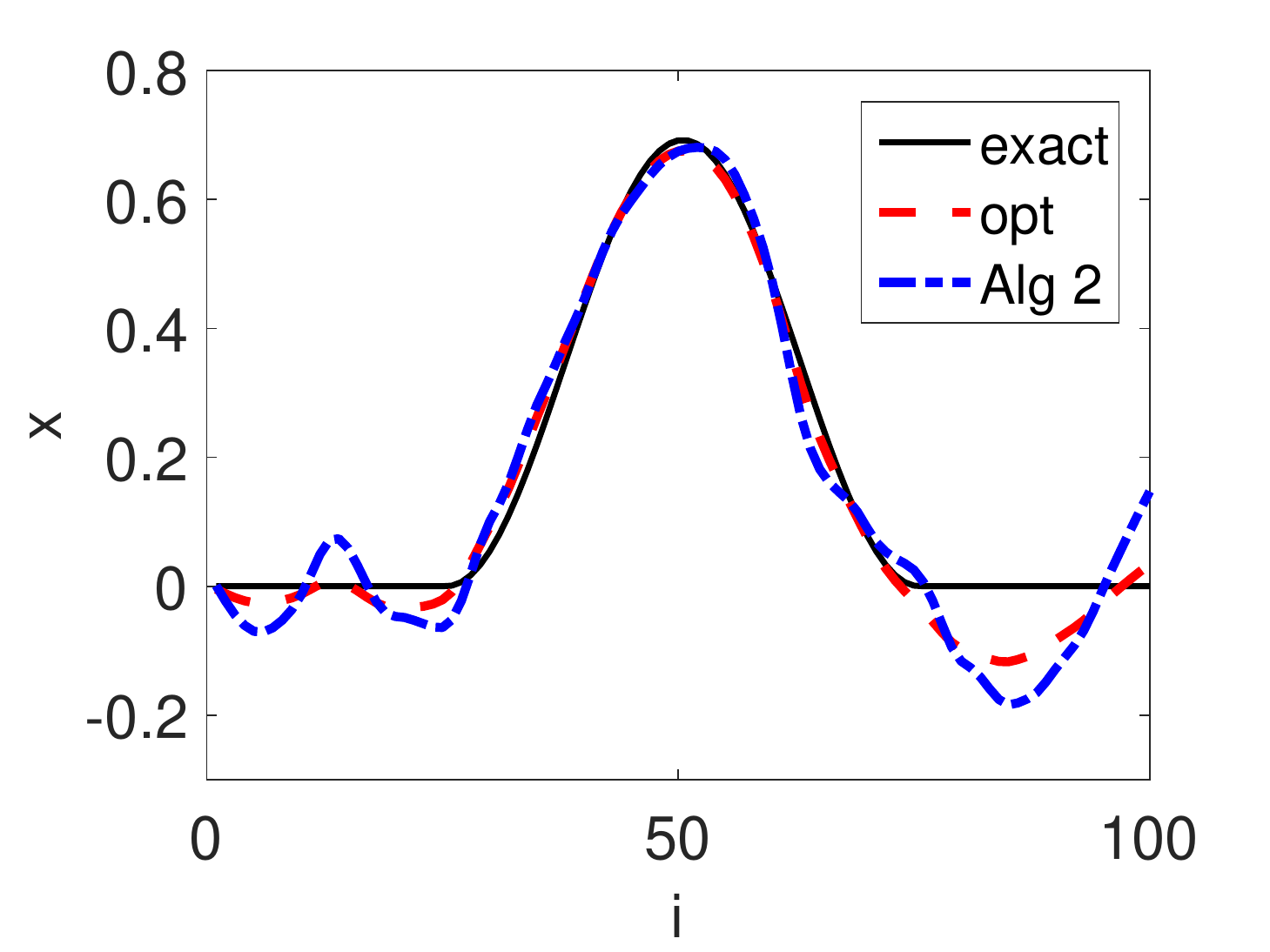}\\
\end{tabular}
\caption{The mean $\mathbf{\bar x}$ of the Gaussian approximation by Algorithm \ref{alg:hyper} (Alg2) and the ``optimal" solution (opt) for
6 realizations of Poisson data for \texttt{phillips} with the $L^2$-prior.\label{fig:hypertest}}
\end{figure}

\begin{table}[hbt!]
  \centering
  \caption{The values of the hyperparameter $\alpha$ for the results in Fig. \ref{fig:hypertest}.\label{tab:alpha}}
  \begin{tabular}{c|cccccc}
  \hline
  case & 1 & 2 & 3 & 4 & 5 & 6\\
  \hline
  opt   & 2.64 & 3.35 & 2.59 & 1.35 & 9.31 & 4.04\\
  Alg 2 & 0.78 & 0.76 & 0.76 & 0.77 & 0.73 & 0.74\\
  \hline
  \end{tabular}
\end{table}

\subsection{VGA versus MCMC}\label{ssec:mcmc}

Despite the widespread use of variational type techniques in practice, the accuracy of the approximations
is rarely theoretically studied. This has long been a challenging issue for approximate Bayesian inference,
including the VGA. In this part, we conduct an experiment to numerically validate the VGA against the
results by Markov chain Monte Carlo (MCMC). To this end, we employ the standard Metropolis-Hastings algorithm,
with the Gaussian approximation from the VGA as the proposal distribution (i.e., independence sampler). In other
words, we correct the samples drawn from VGA by a Metropolis-Hastings step.  The length of the MCMC chain is $2\times 10^5$, and
the last $1\times 10^5$ samples are used for computing the summarizing statistics. The acceptance rate
in the Metropolis-Hastings algorithm is $96.06\%$. This might be attributed to the fact that the VGA approximates
the posterior distribution fairly accurately, and thus nearly all the proposals are accepted. The numerical results are
presented in Fig. \ref{fig:HPD}, where the mean and the $90\%$ highest posterior density (HPD) credible set are
shown. It is observed that the mean and HPD regions by MCMC and VGA are very close to each other, cf.
Figs. \ref{fig:HPD} and \ref{fig:mean_comp}, thereby validating the accuracy of the VGA. The $\ell^2$ error
between the mean by MCMC and GVA is $9.80\times 10^{-3}$, and the error between corresponding
covariance in spectral norm is $6.40\times 10^{-3}$. Just as expected, graphically the means and covariances
are indistinguishable, cf. Fig. \ref{fig:mean_comp}.

\begin{figure}[hbt!]
\centering
\begin{tabular}{cc}
\includegraphics[scale=0.35]{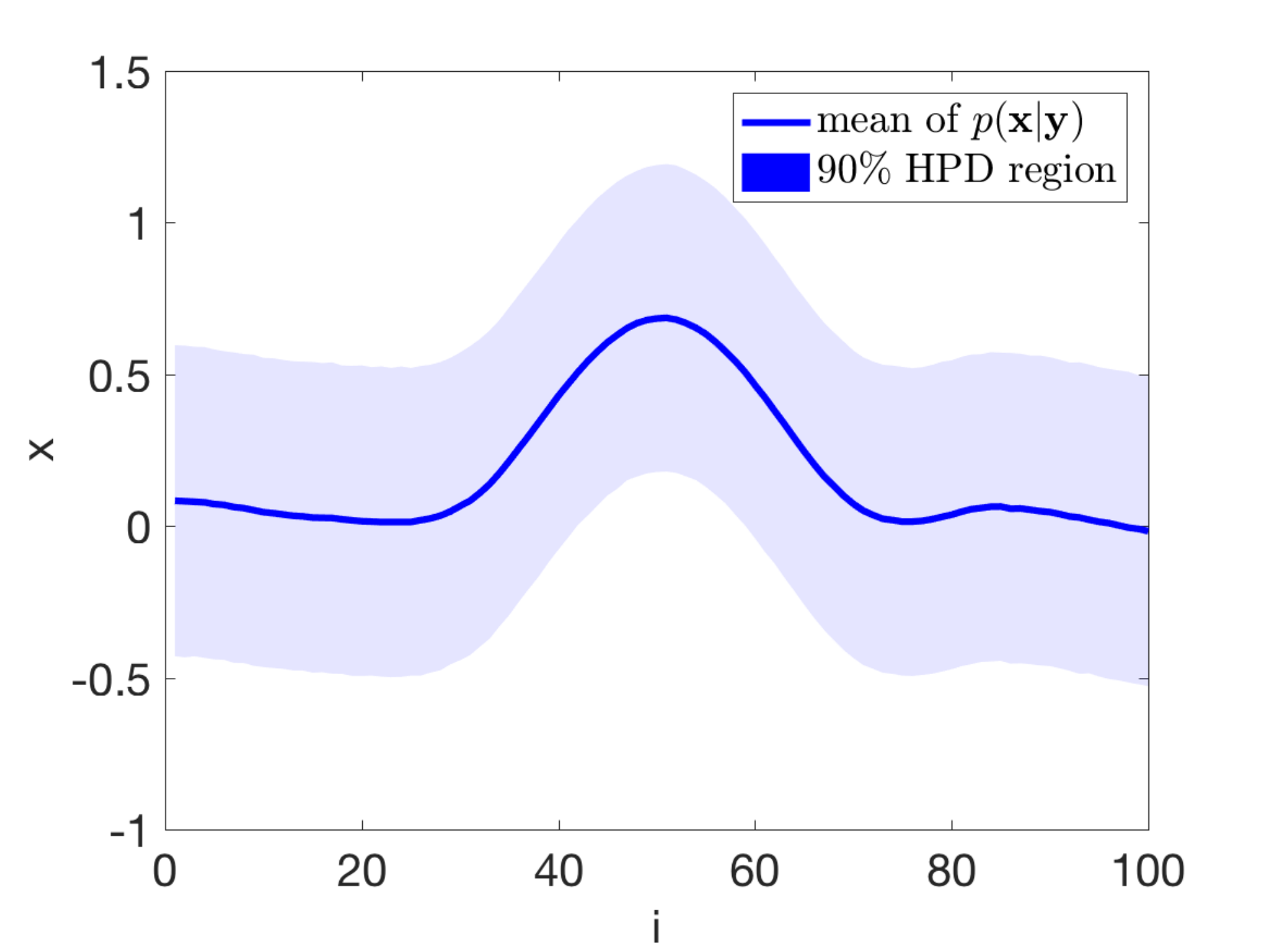} & \includegraphics[scale=0.35]{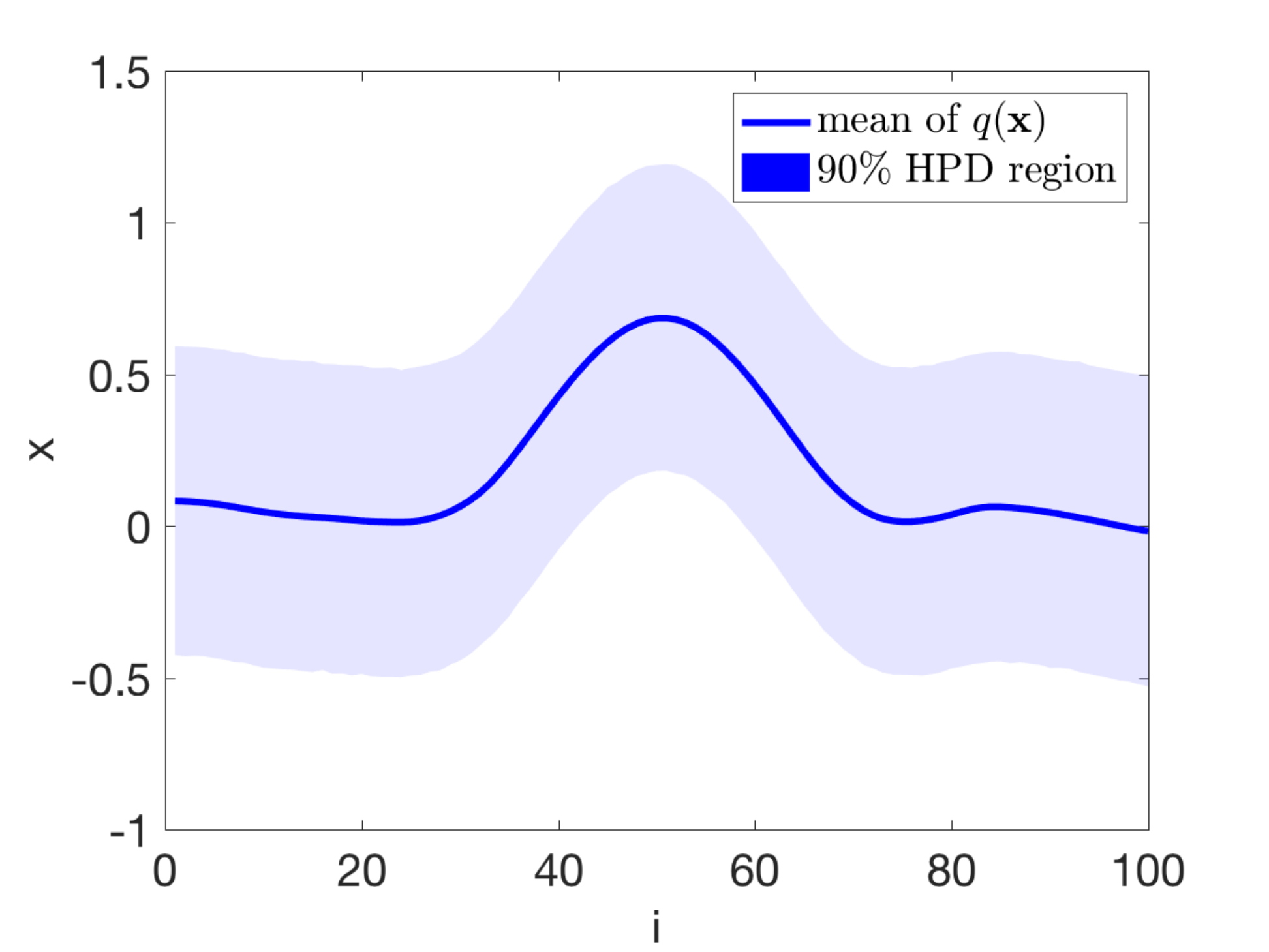}\\
(a) MCMC & (b) VGA
\end{tabular}
\caption{The mean and $90\%$ HPD by (a) MCMC and (b) VGA for \texttt{phillips} with $\mathbf{C}_0=1.00\times 10^{-1}\mathbf{\bar C}_0$.\label{fig:HPD}}
\end{figure}

\begin{figure}[hbt!]
\centering
\begin{tabular}{ccc}
\includegraphics[width=0.33\textwidth]{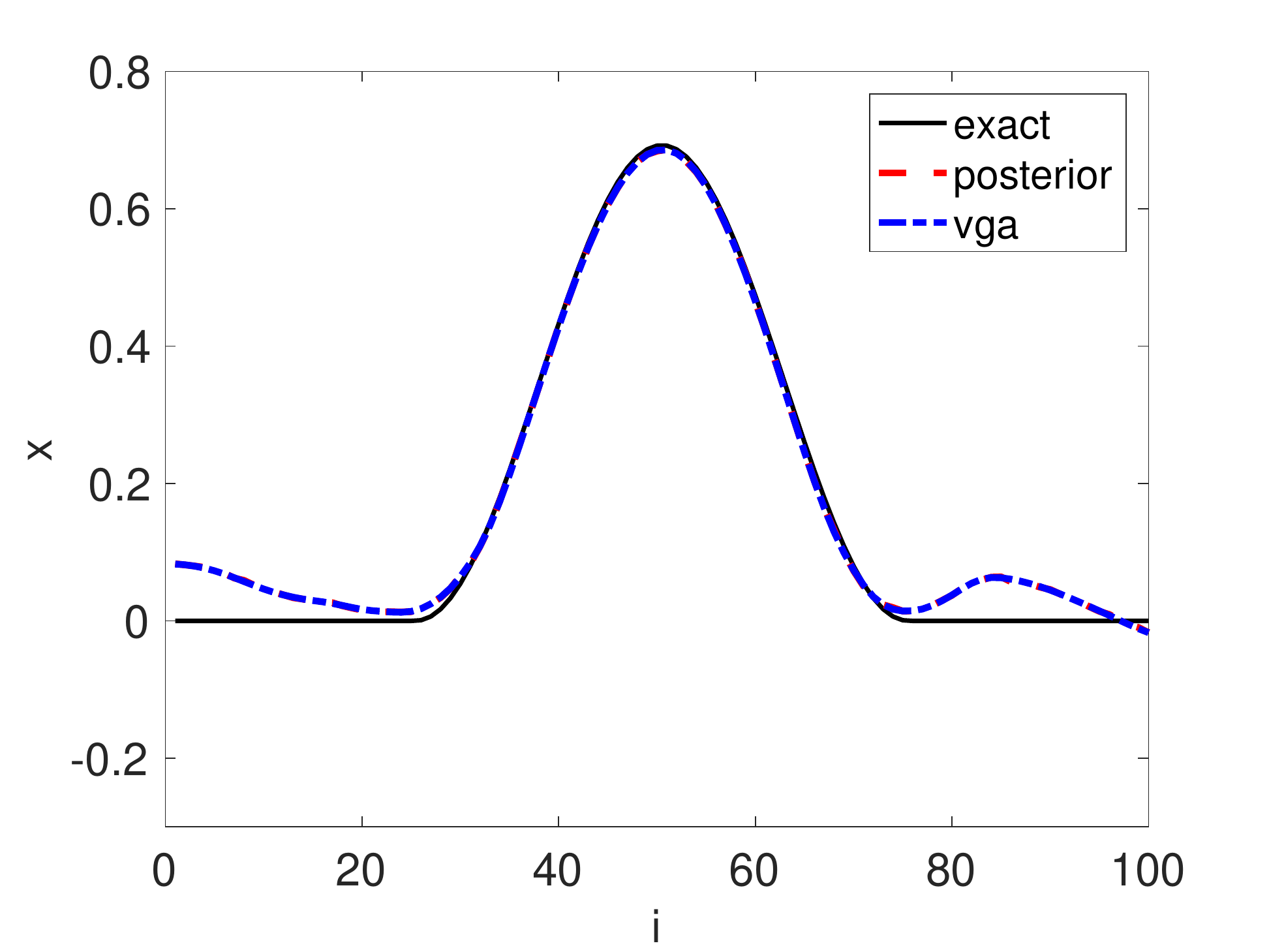} & \includegraphics[width=0.33\textwidth]{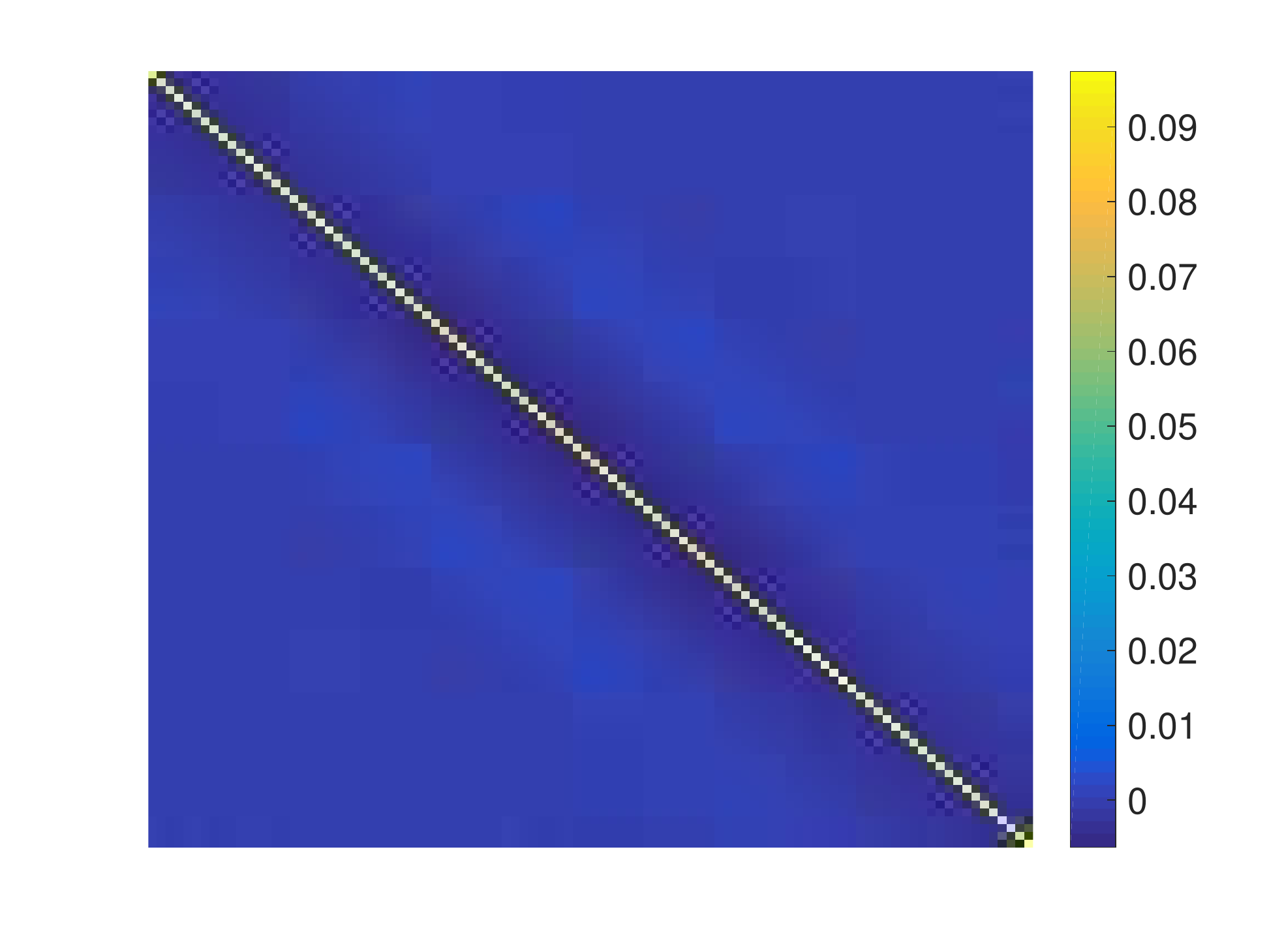} & \includegraphics[width=0.33\textwidth]{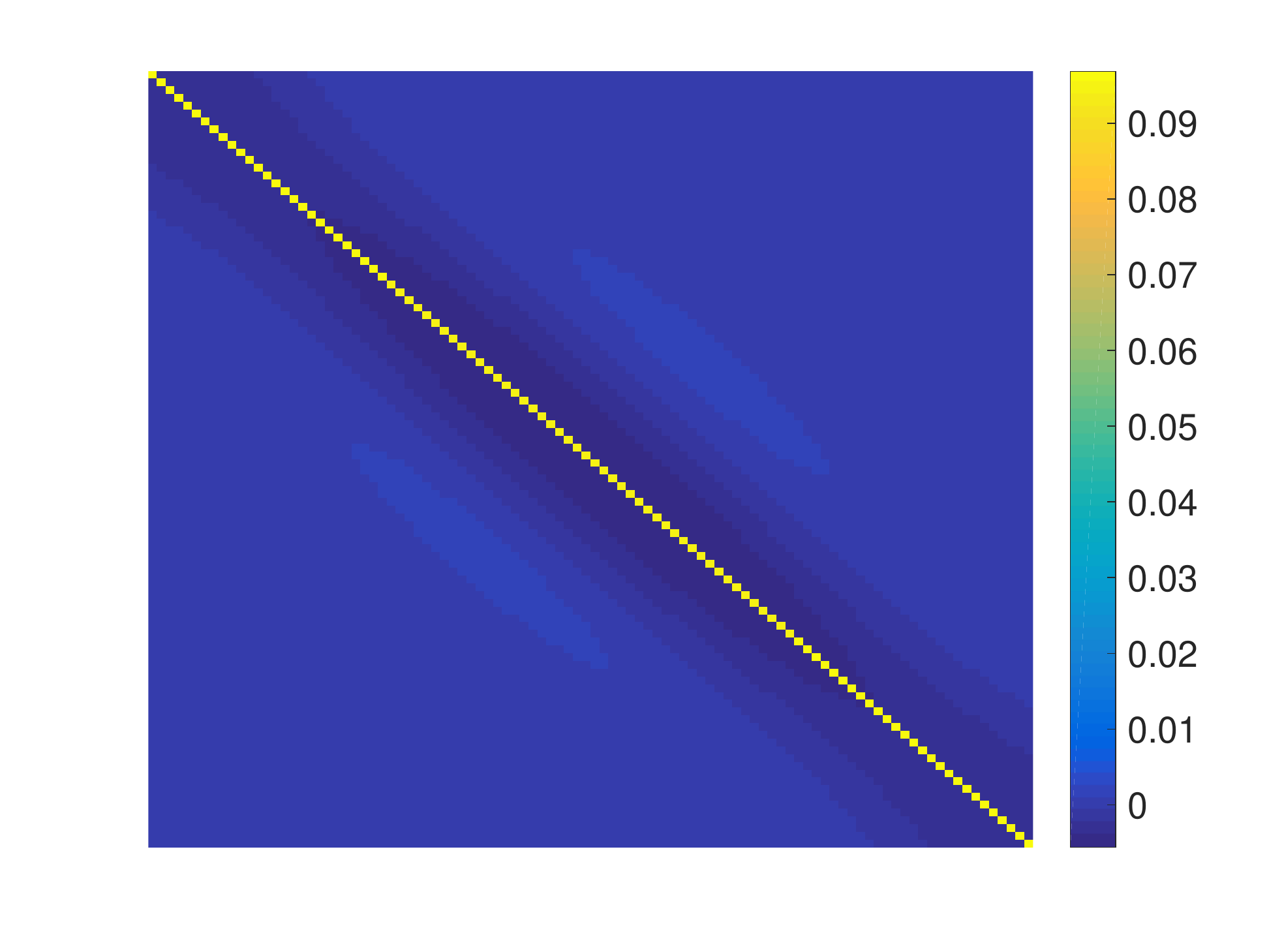}\\
(a) mean  & (b) MCMC & (c) VGA
\end{tabular}
\caption{(a) The mean by MCMC and VGA versus the exact solution, and the covariance by (b) MCMC and (c) VGA
for \texttt{phillips} with $\mathbf{C}_0=1.00\times 10^{-1}\mathbf{\bar C}_0$.\label{fig:mean_comp}}
\end{figure}

\subsection{Numerical reconstructions}\label{ssec:recon}

Last, we present VGAs for one- and two-dimensional examples. The numerical results for
the following four 1D examples, i.e., \texttt{phillips}, \texttt{foxgood}, \texttt{gravity} and \texttt{heat}, for both
$L^2$- and $H^1$-priors, are presented in Figs. \ref{fig:ph_L2}-\ref{fig:he_L2}. For the example \texttt{phillips}
with either prior, the mean $\mathbf{\bar{x}}$ by Algorithm \ref{alg:vb} agrees very
well with the true solution $\mathbf{x}^\dagger$. However, near the boundary, the mean $\mathbf{\bar x}$ is less accurate.
This might be attributed to the fact that in these regions, the Poisson count is relatively small, and it may be
insufficient for an accurate recovery. For the $L^2$-prior, the optimal $\mathbf{C}$ is diagonal
dominant, and decays rapidly away from the diagonal, cf. Fig. \ref{fig:ph_L2}(b). For the $H^1$-prior,
$\mathbf{C}$ remains largely diagonally dominant, but the off-diagonal entries decay a bit slower. Thus, it is
valid to assume that $\mathbf{C}$ is dominated by local interactions as in Section \ref{ssec:complexity}.
These observations remain largely valid for the other 1D examples, despite that they are much more ill-posed.

\begin{figure}[hbt!]
\centering
\begin{tabular}{cc}
\includegraphics[scale=0.25]{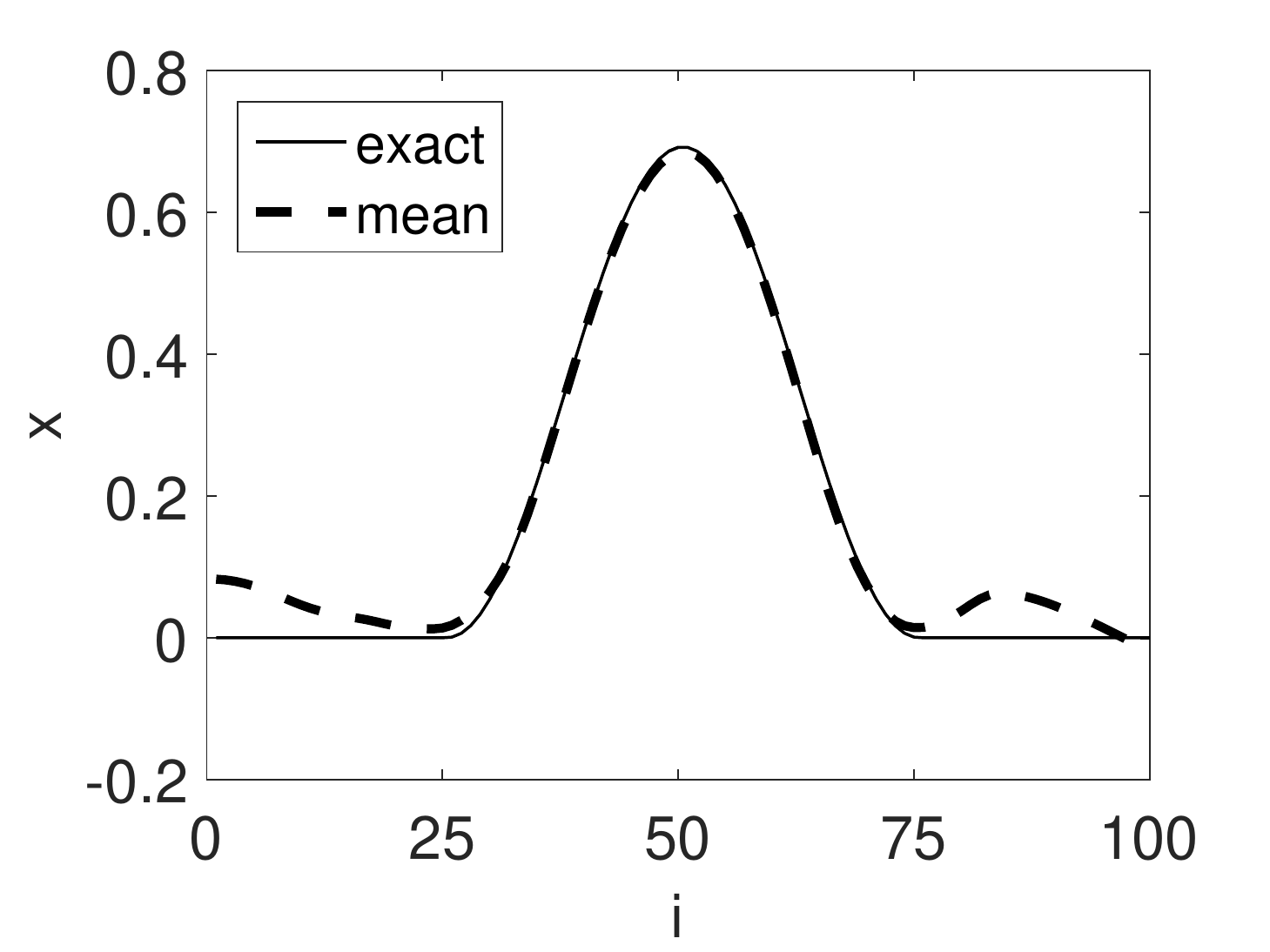}  \includegraphics[scale=0.25]{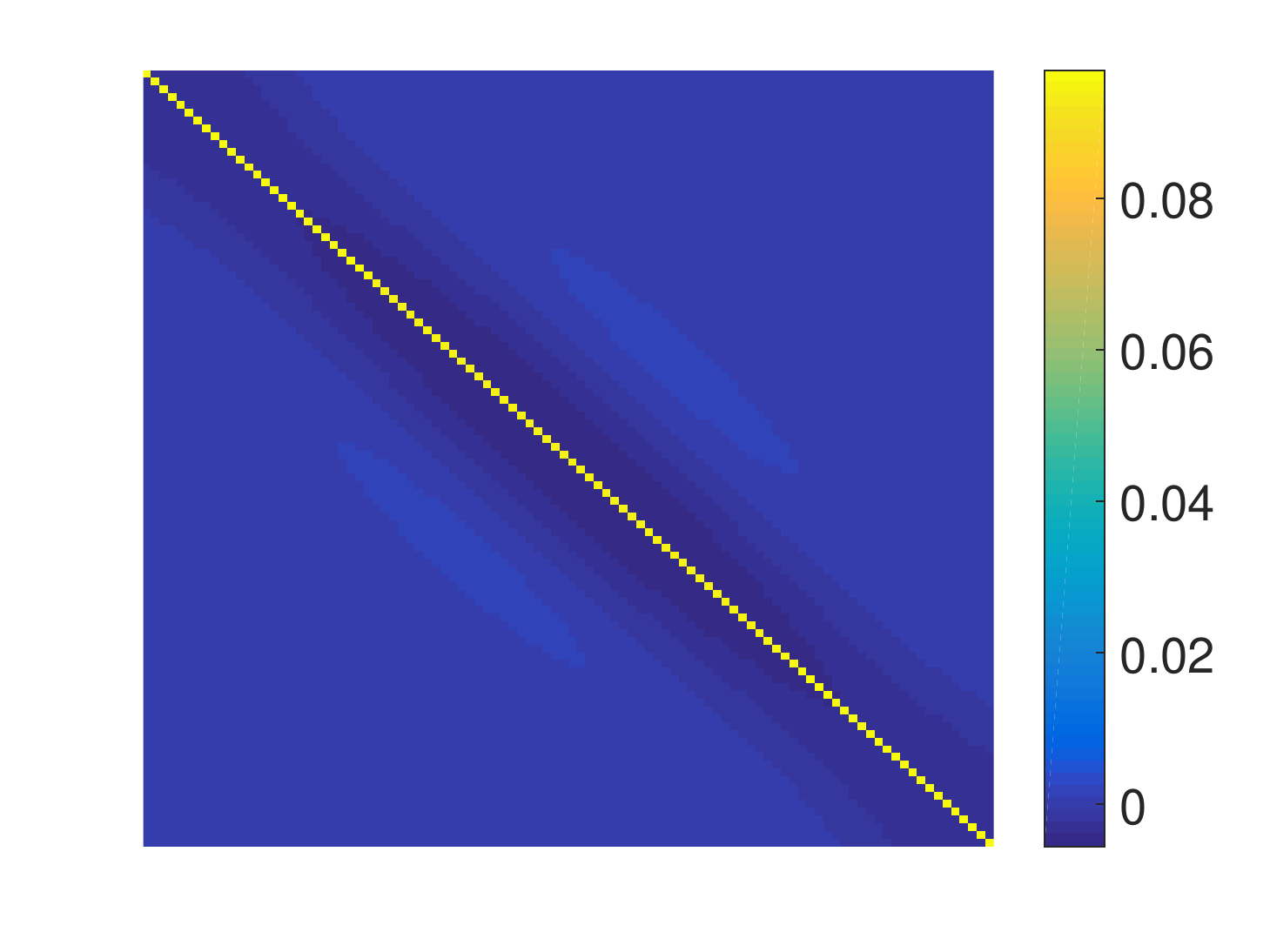}&\includegraphics[scale=0.25]{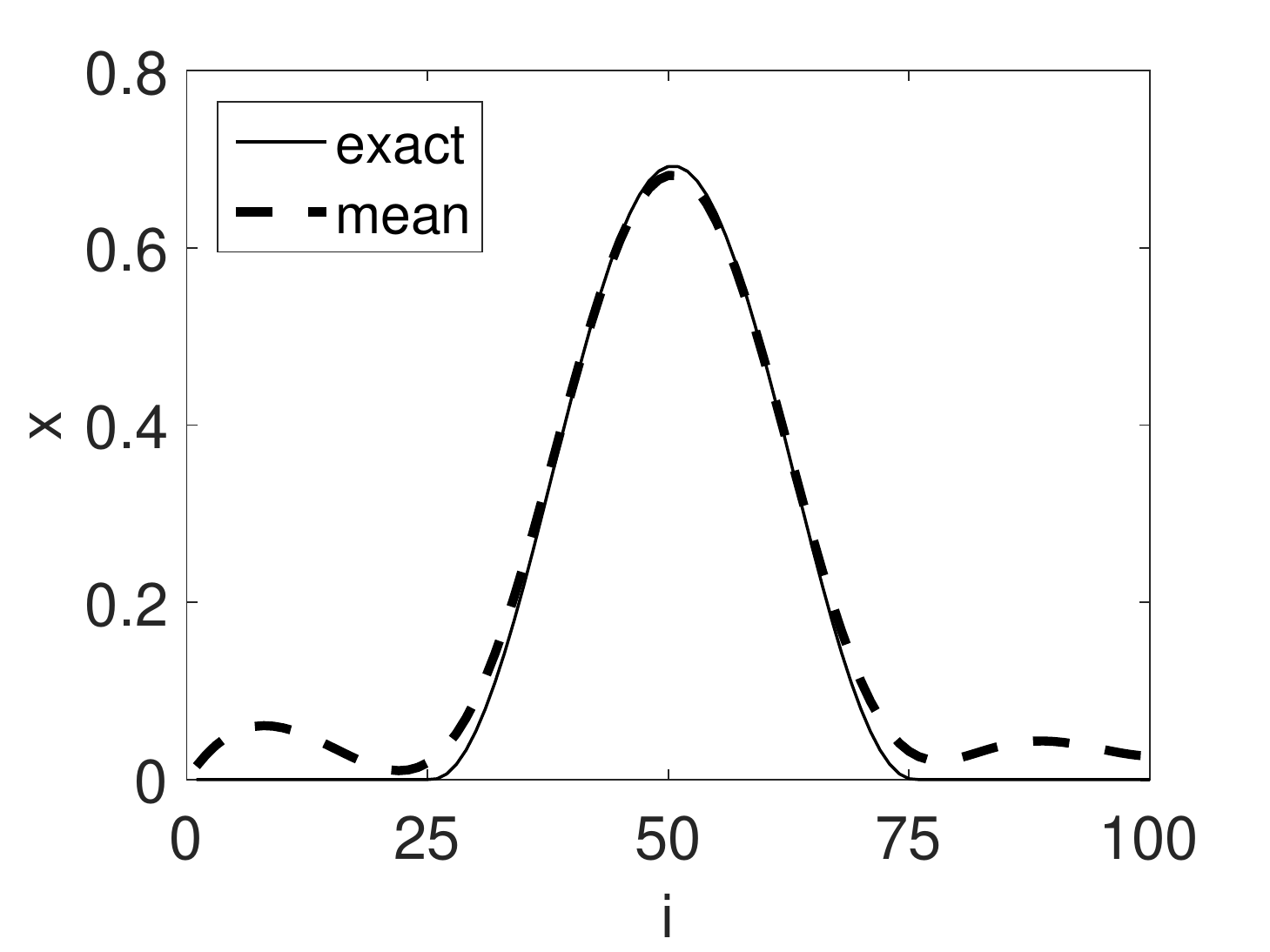} \includegraphics[scale=0.25]{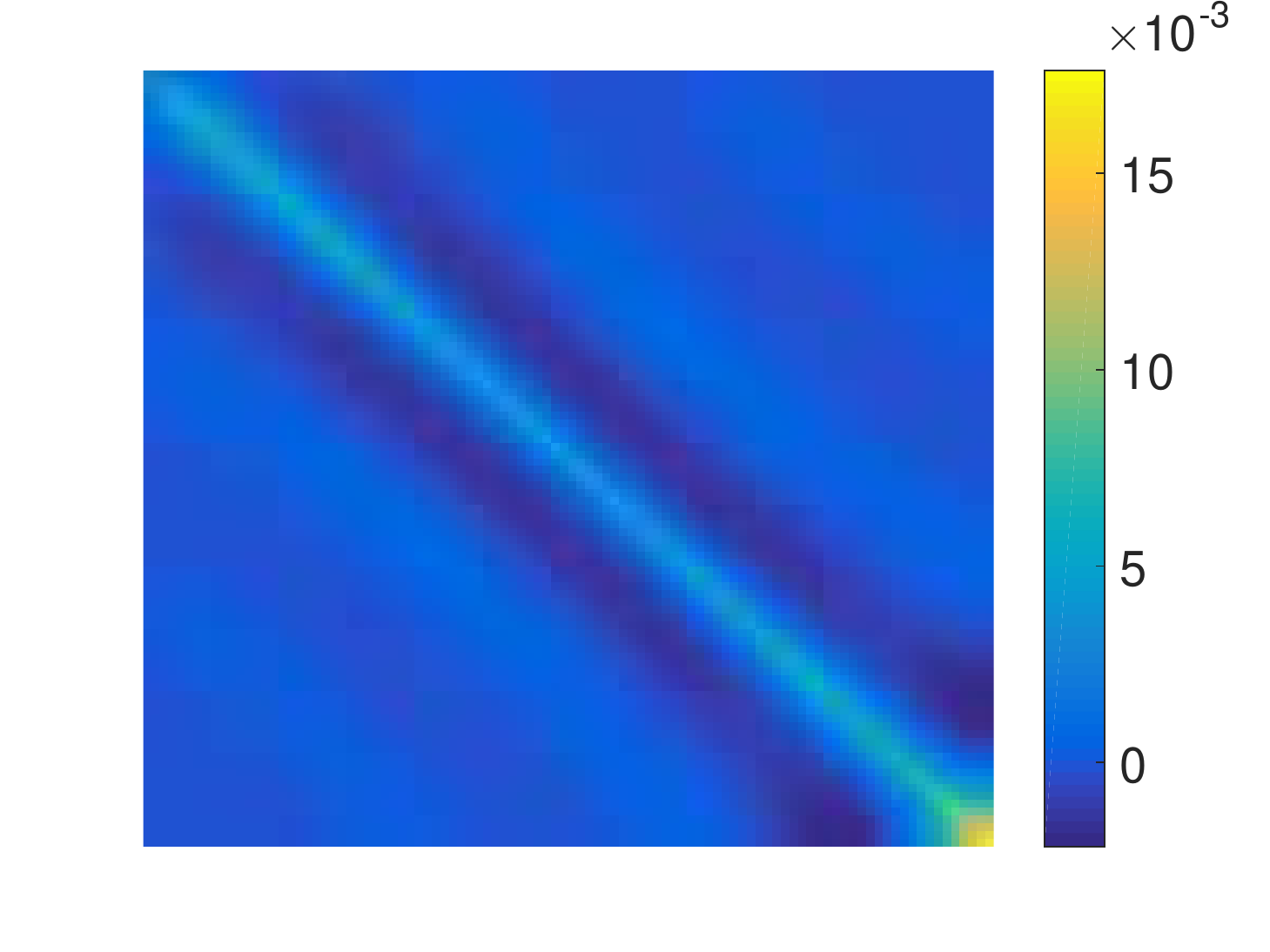}\\
(a) $\mathbf{C}_0=1.00\times 10^{-1}\mathbf{\bar C}_0$ & (b) $\mathbf{C}_0=2.5\times 10^{-3}\mathbf{\bar C}_1$
\end{tabular}
\caption{The Gaussian approximation for \texttt{phillips}.\label{fig:ph_L2}}
\end{figure}

\begin{figure}[hbt!]
\centering
\begin{tabular}{cc}
\includegraphics[scale=0.25]{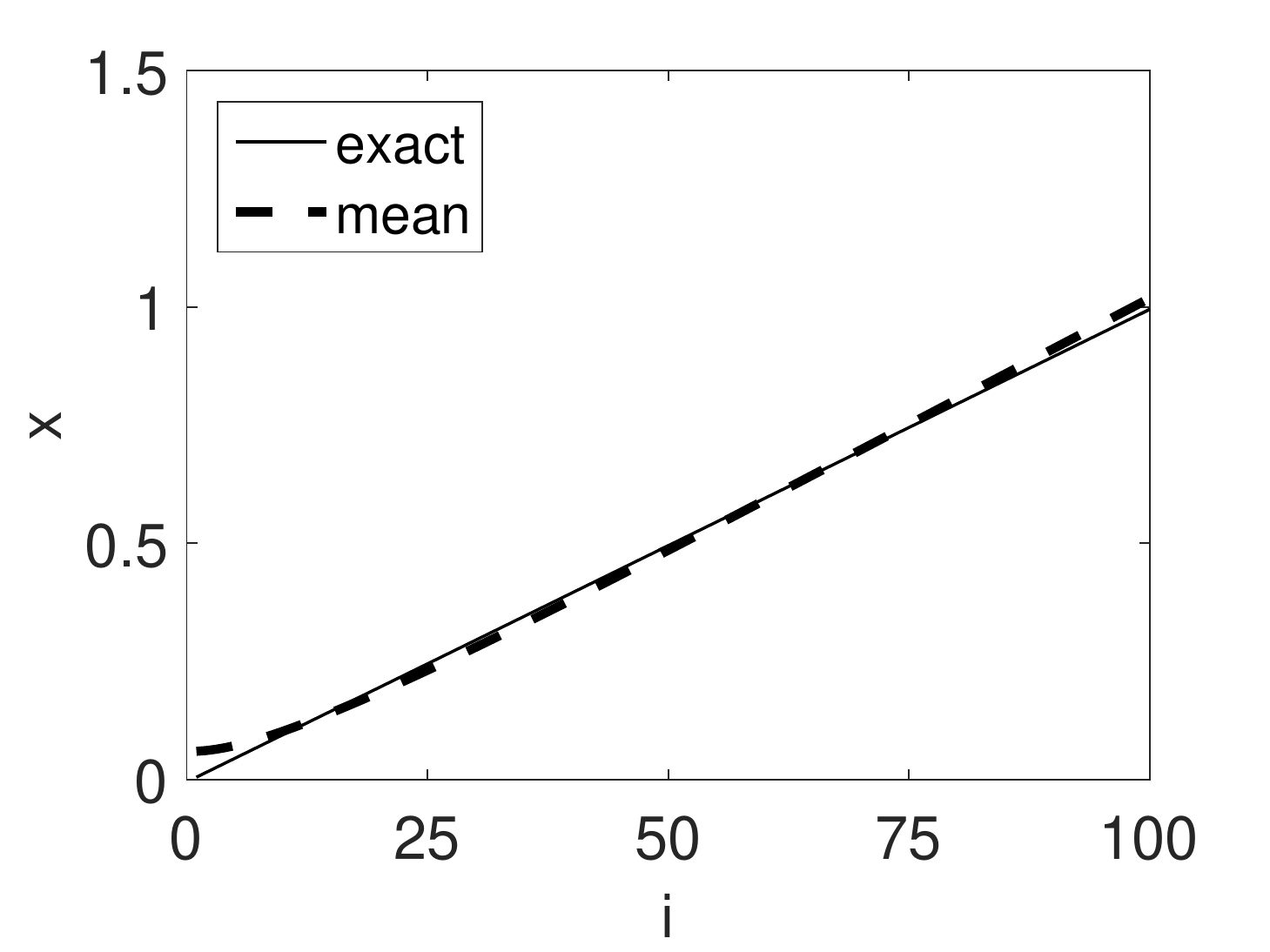}\includegraphics[scale=0.25]{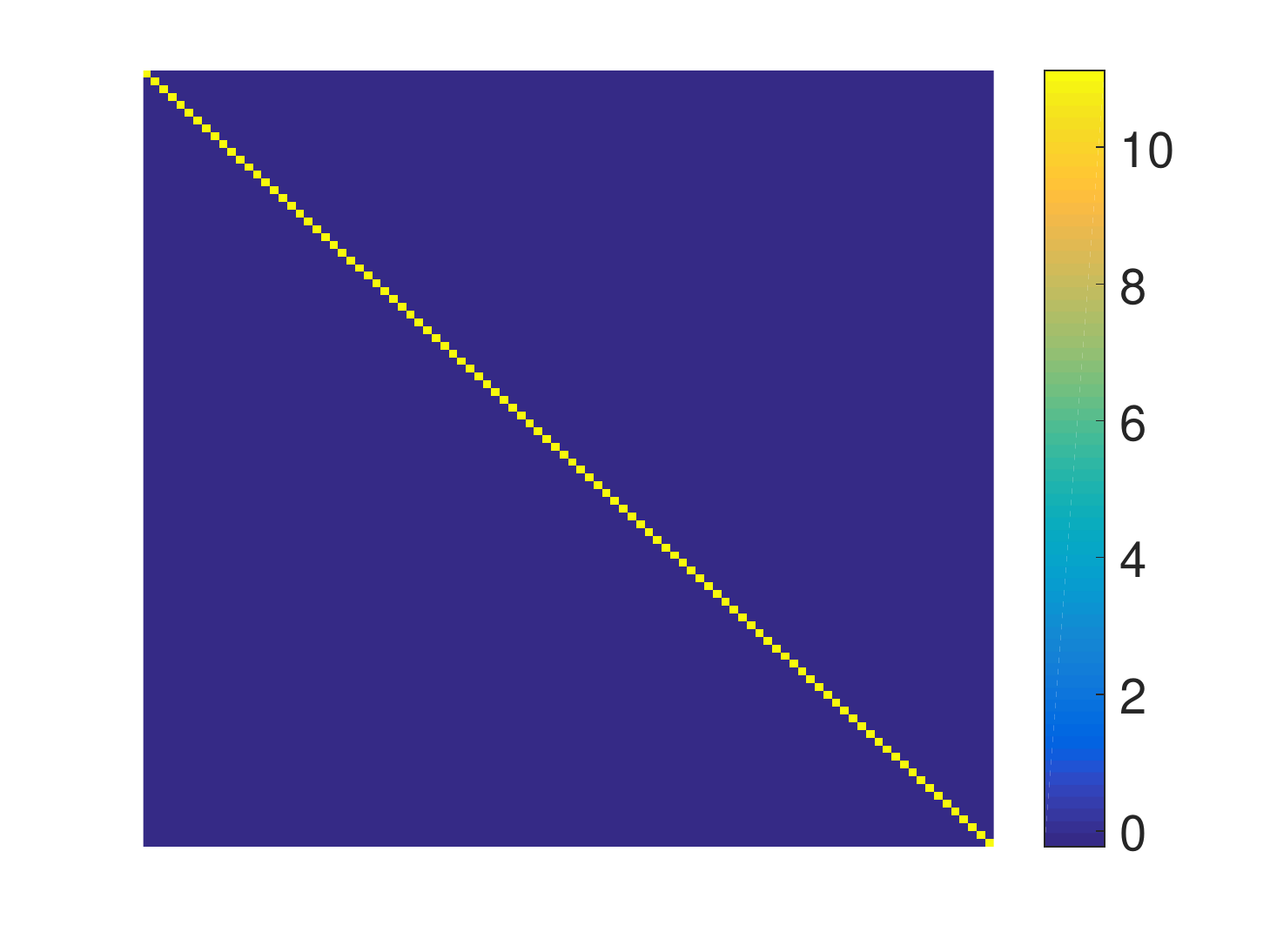} & \includegraphics[scale=0.25]{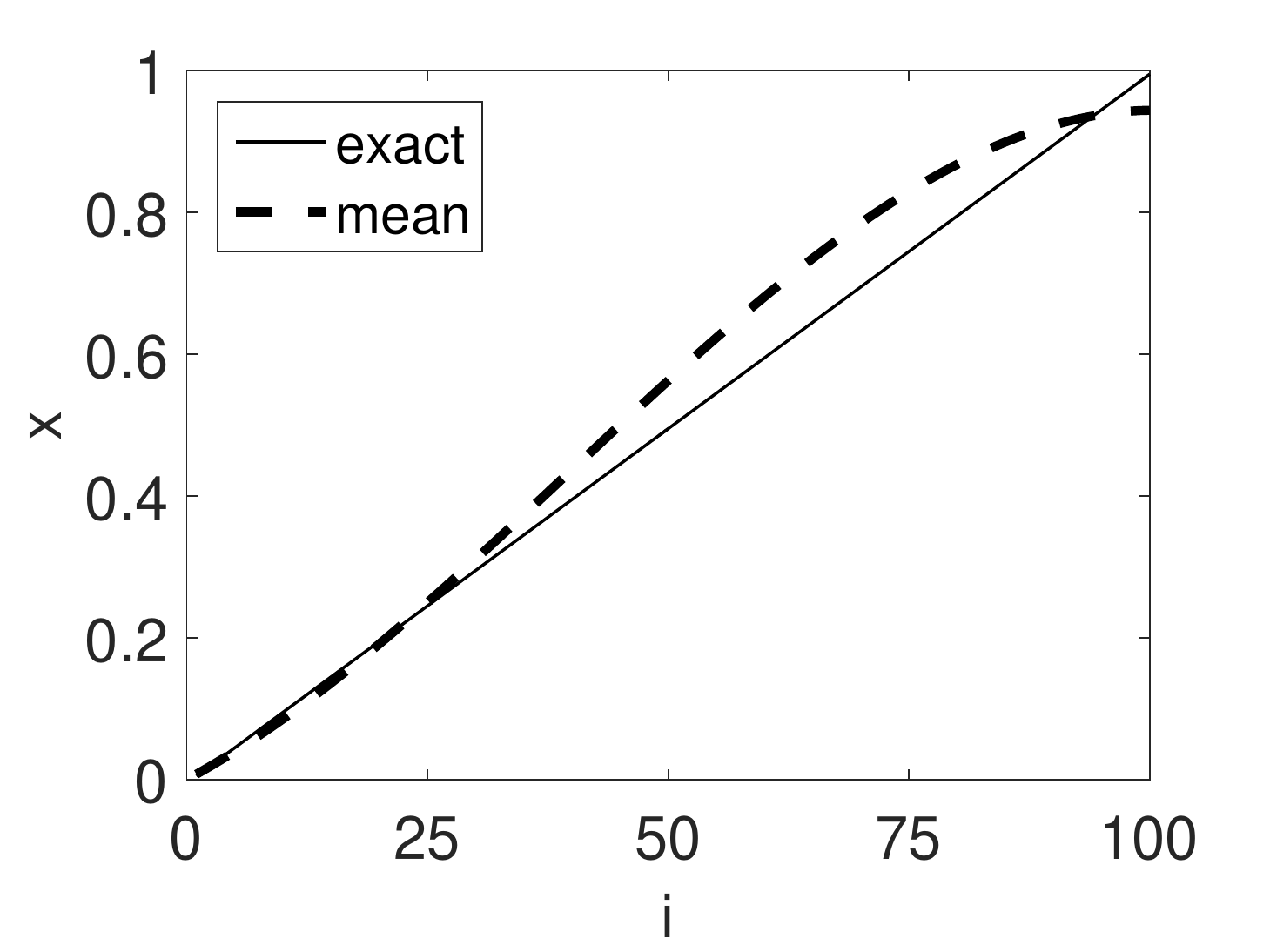}\includegraphics[scale=0.25]{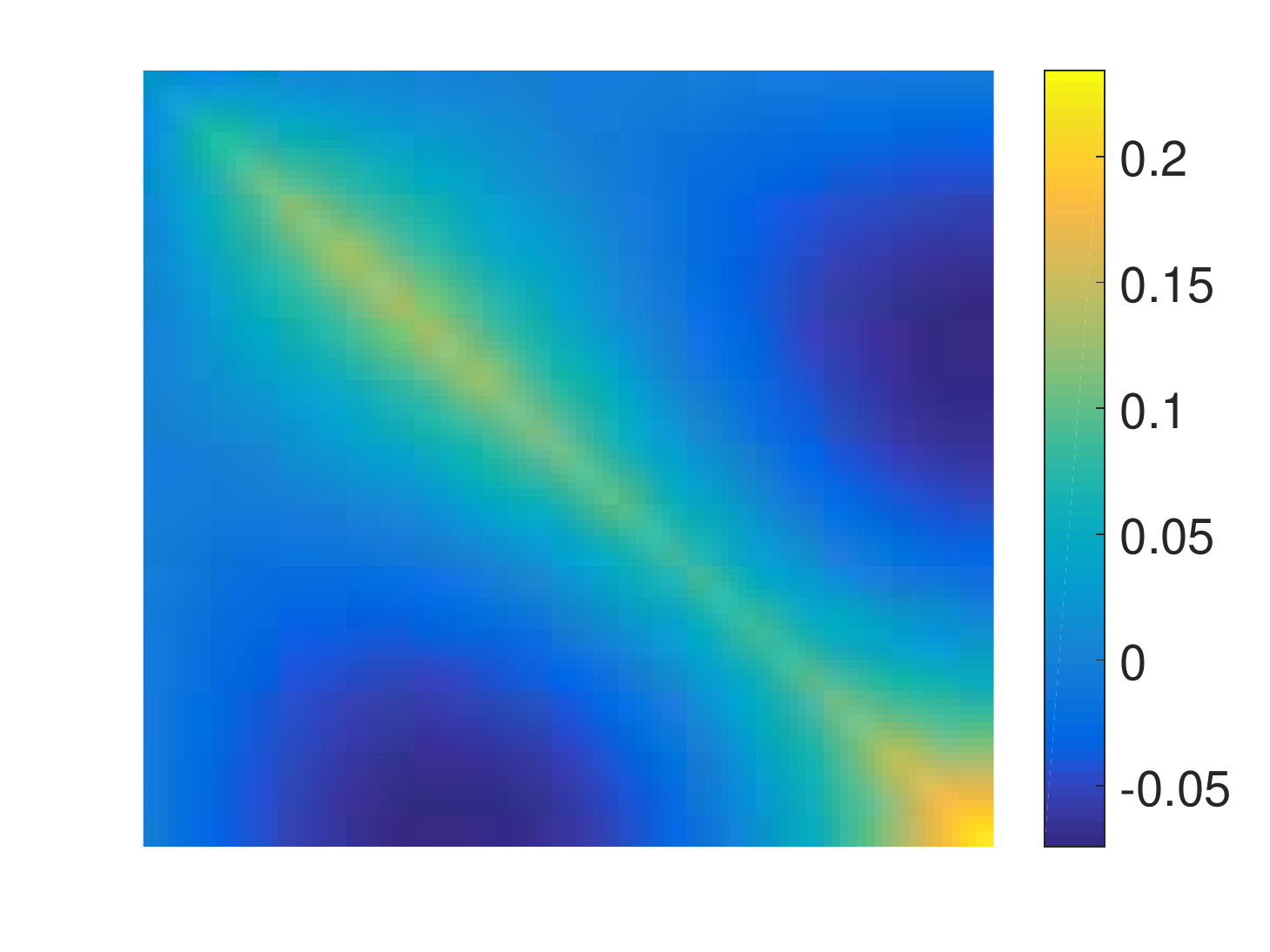}\\
(a) $\mathbf{C}_0=1.12\times 10^{1}\mathbf{\bar C}_0$ & (b) $\mathbf{C}_0=9.8\times 10^{-3}\mathbf{\bar C}_1$
\end{tabular}
\caption{The Gaussian approximation for \texttt{foxgood}.\label{fig:fg_L2}}
\end{figure}

\begin{figure}[hbt!]
\centering
\begin{tabular}{cc}
\includegraphics[scale=0.25]{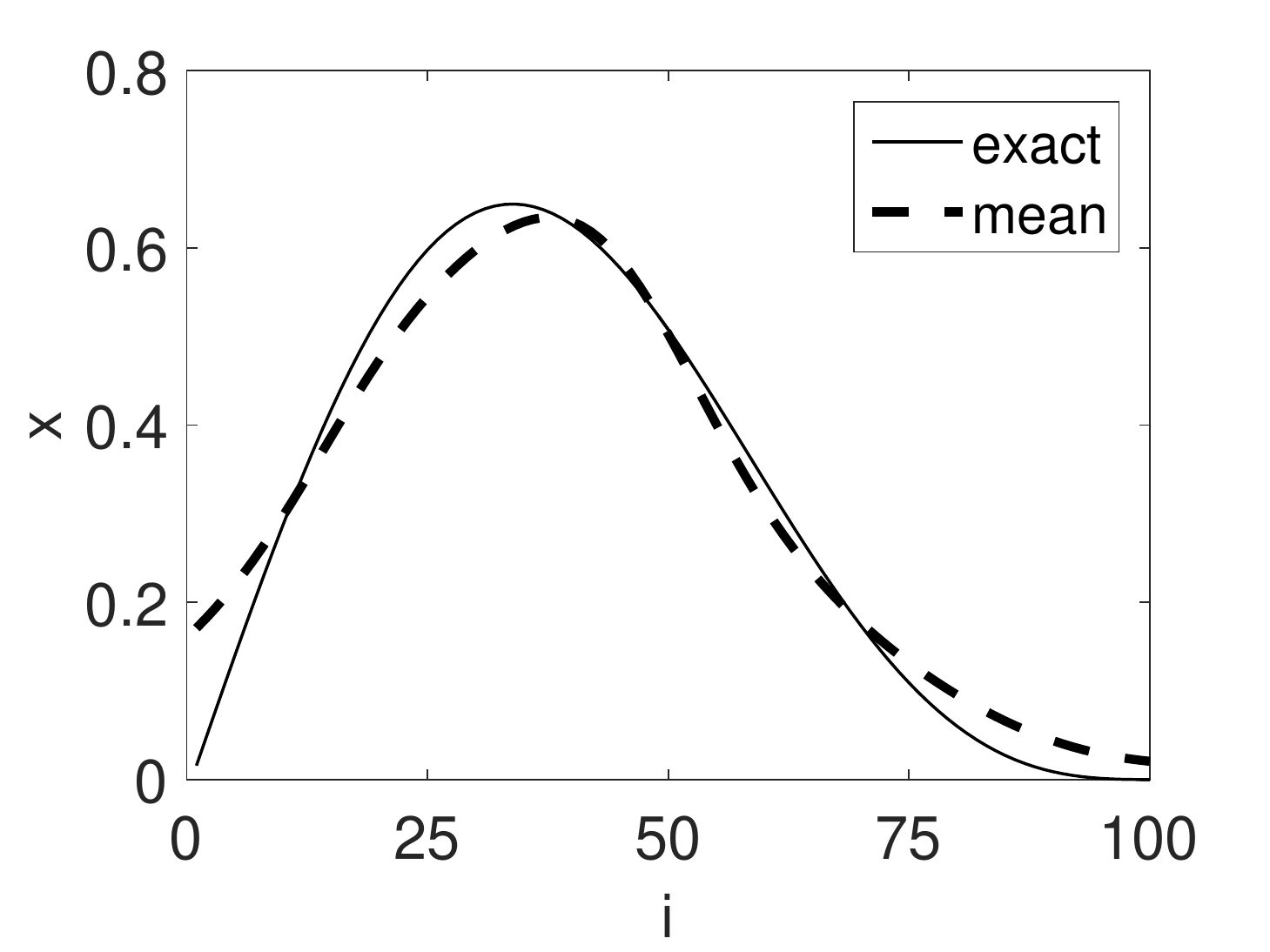}\includegraphics[scale=0.25]{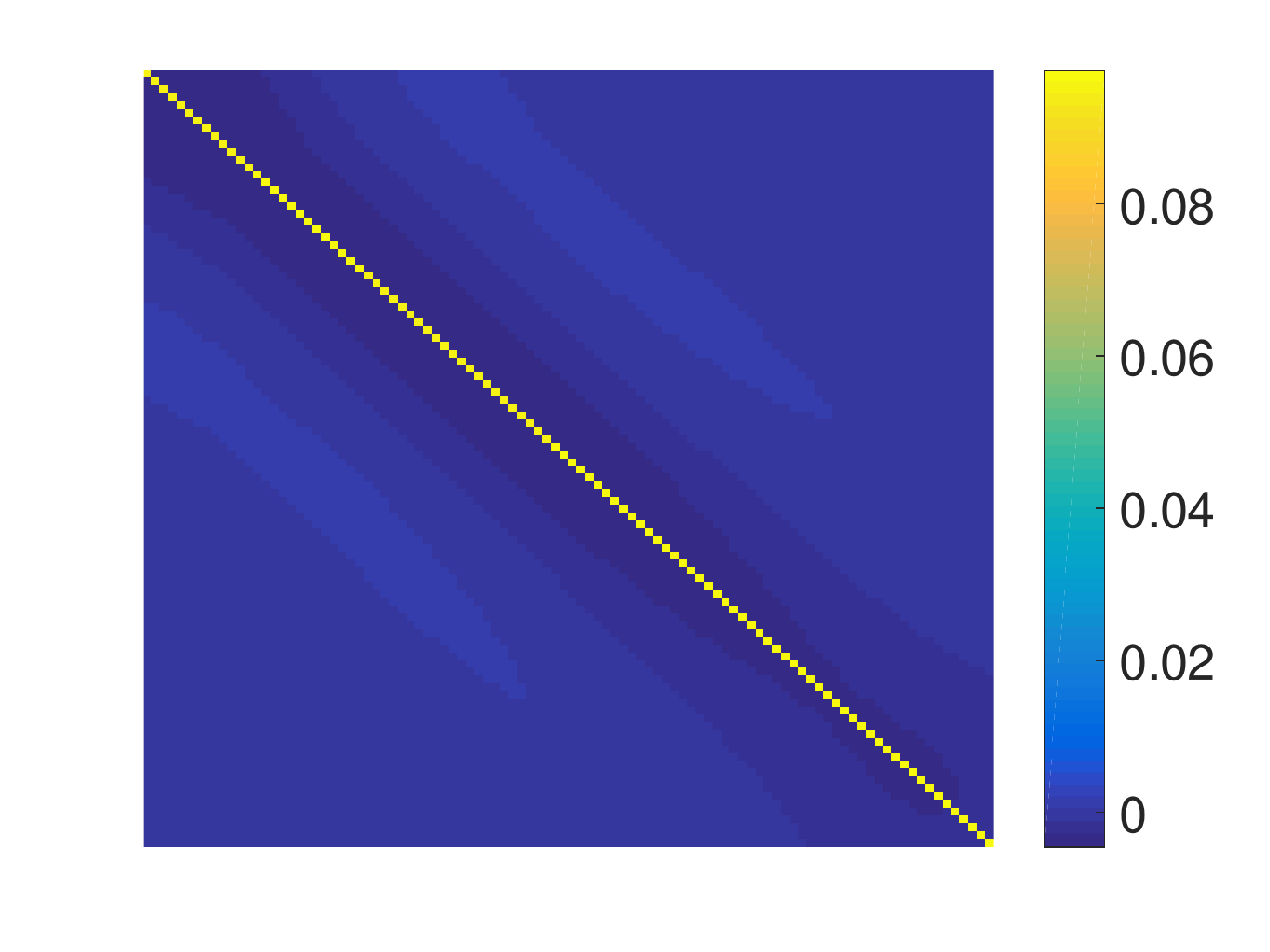}&\includegraphics[scale=0.25]{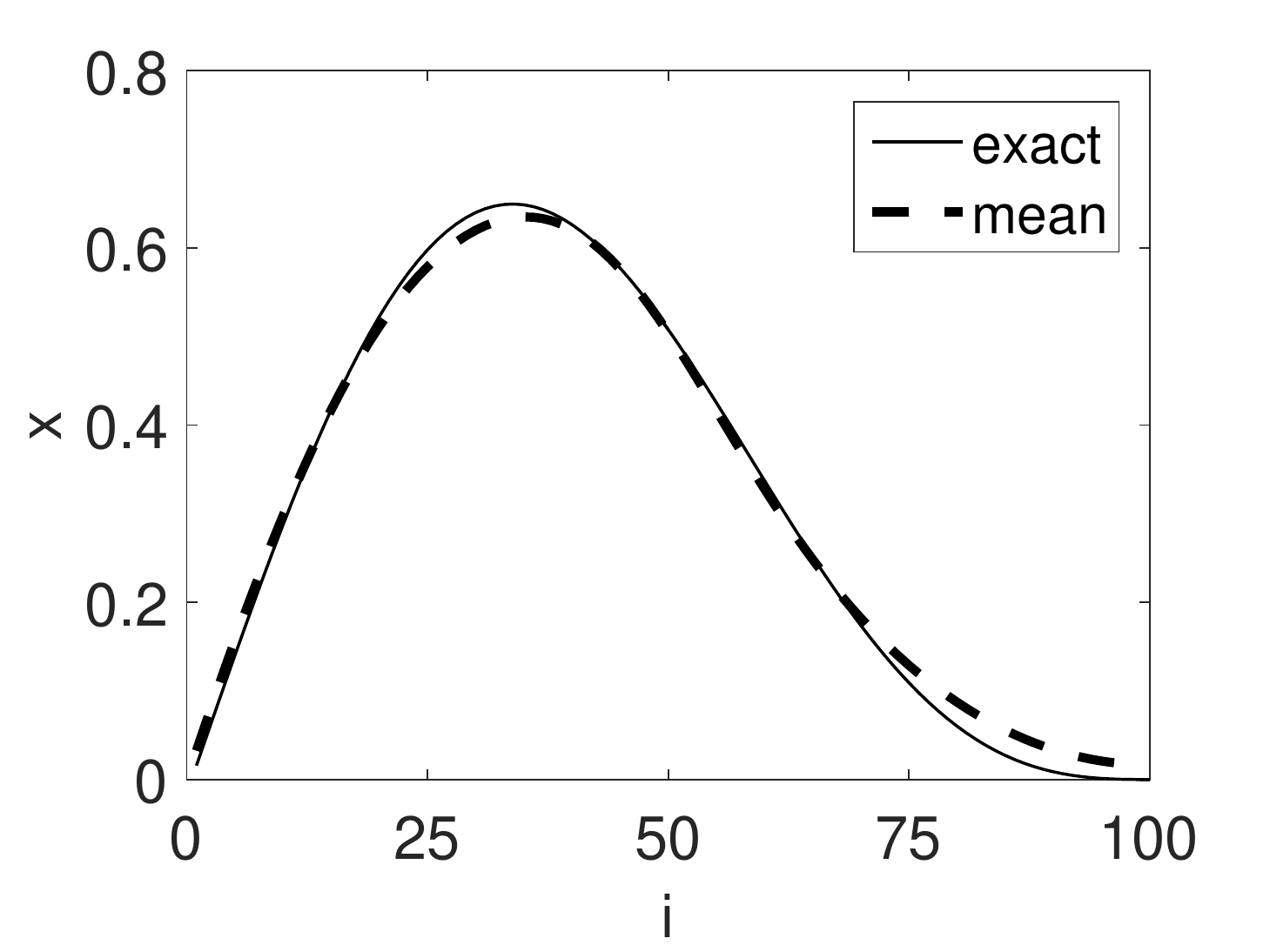}\includegraphics[scale=0.25]{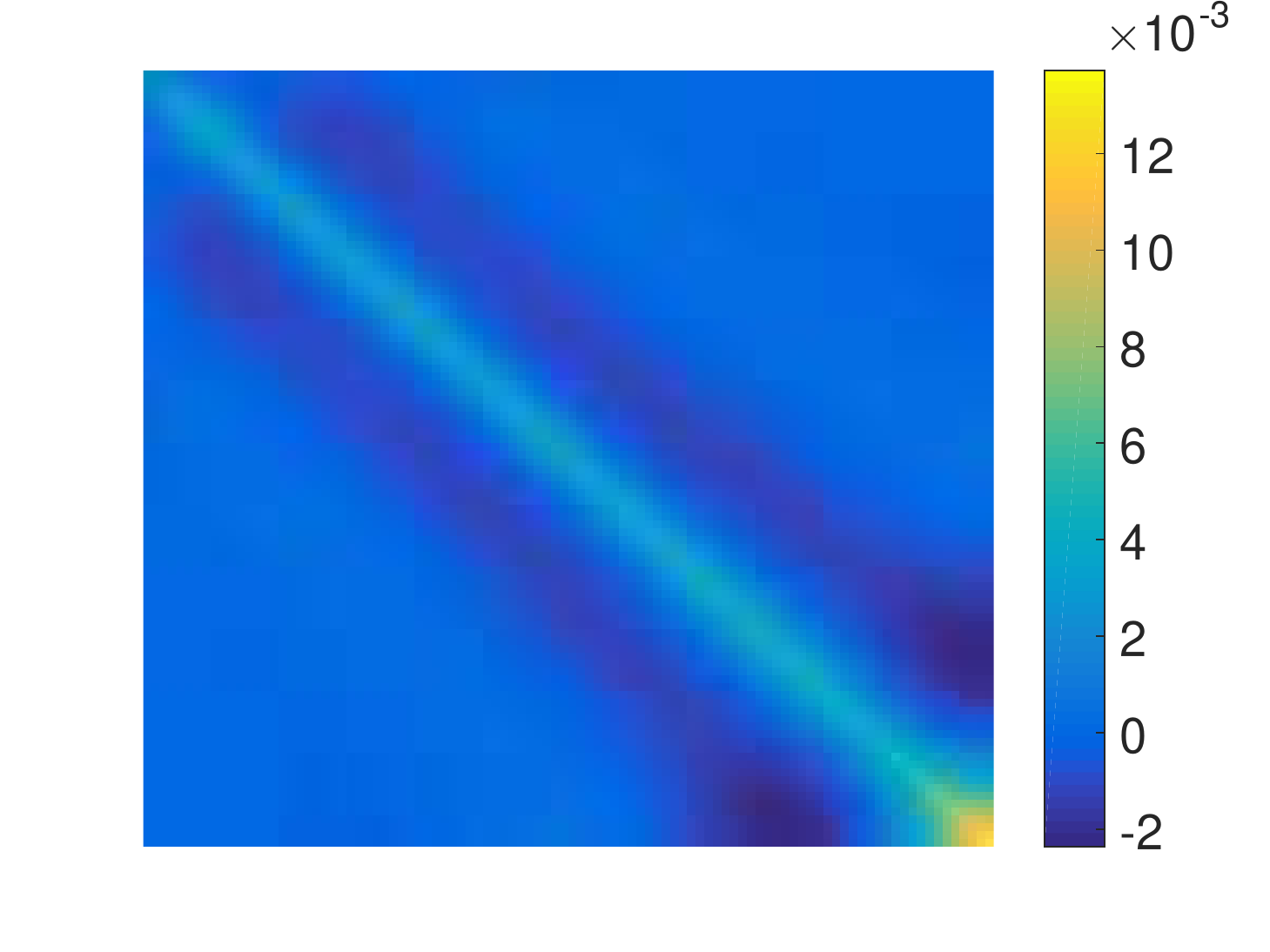} \\
(a) $\mathbf{C}_0=1\times 10^{-1}\mathbf{\bar C}_0$ & (b) $\mathbf{C}_0=1.5\times 10^{-3}\mathbf{\bar C}_1$
\end{tabular}
\caption{The Gaussian approximation for \texttt{gravity}.\label{fig:gr_L2}}
\end{figure}

\begin{figure}[hbt!]
\centering
\begin{tabular}{cc}
\includegraphics[scale=0.25]{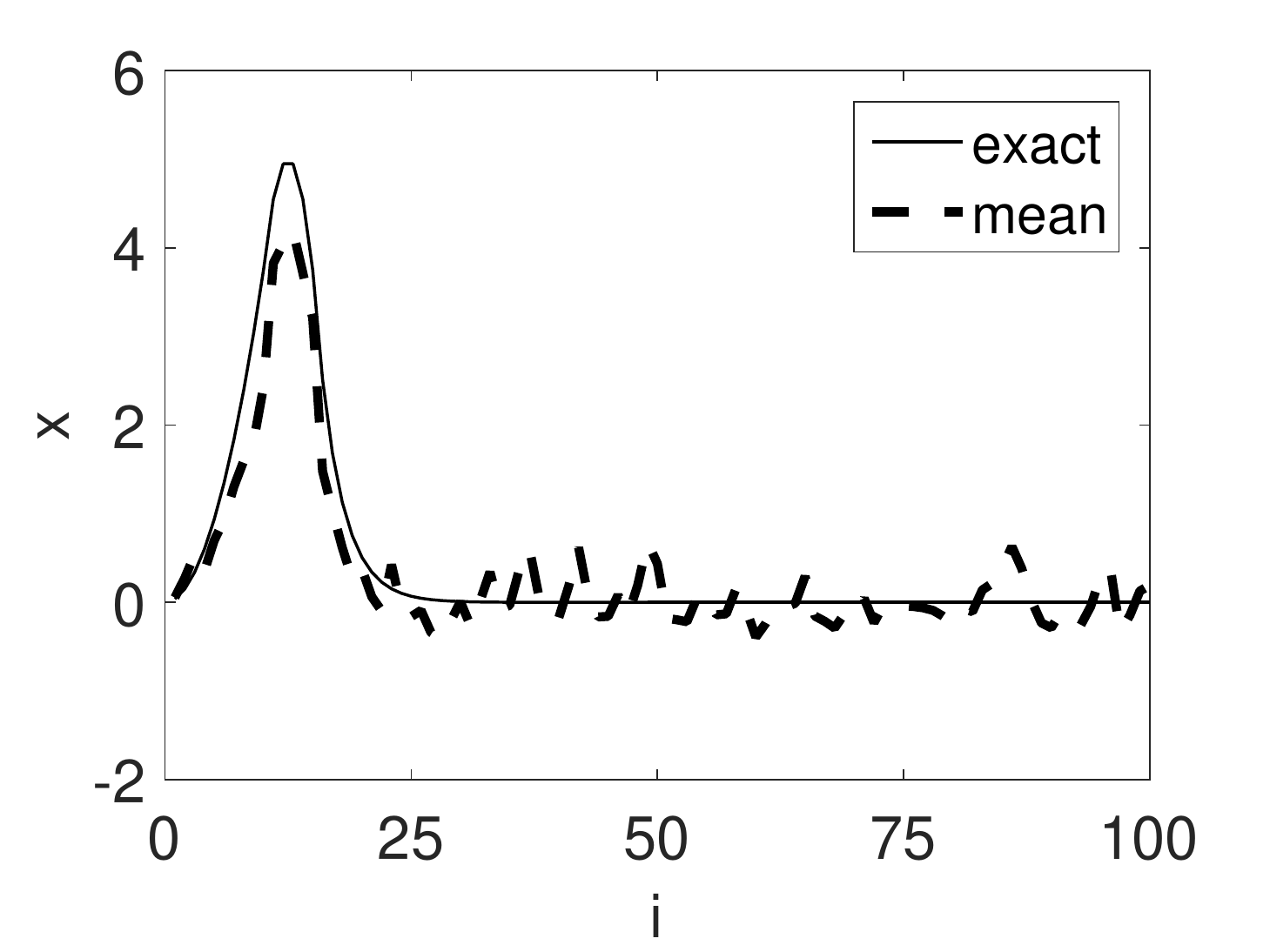}\includegraphics[scale=0.25]{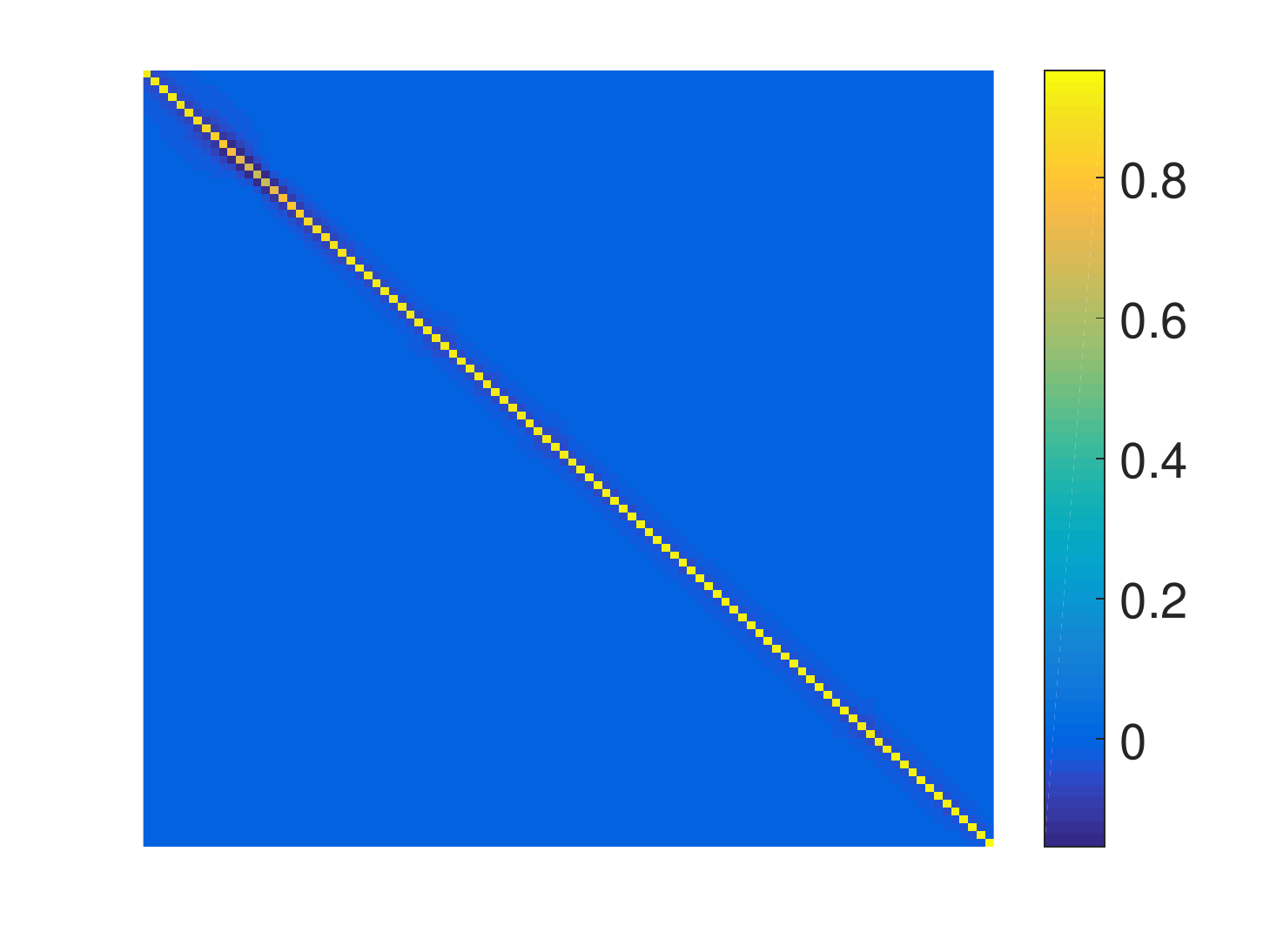}&\includegraphics[scale=0.25]{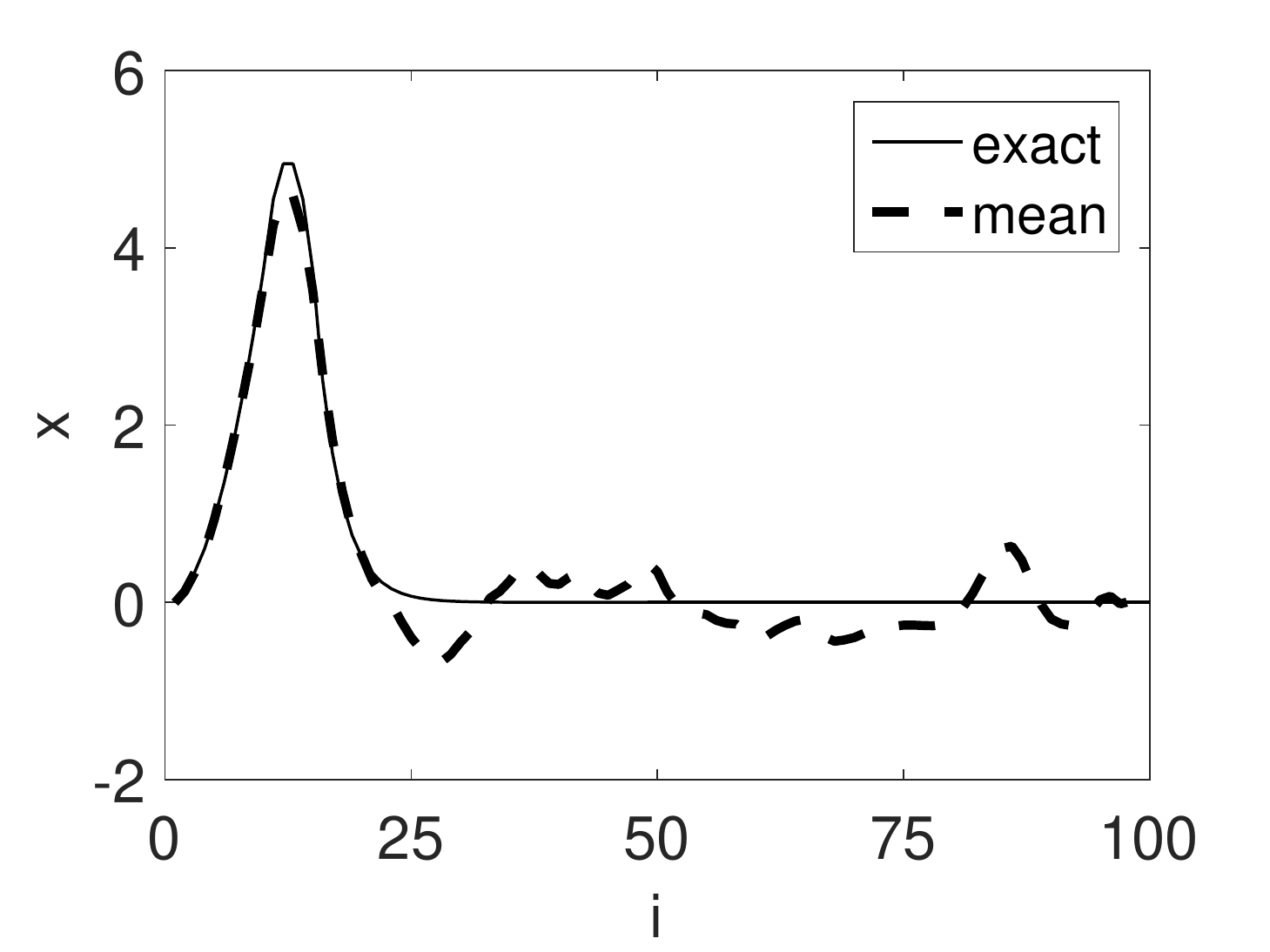}\includegraphics[scale=0.25]{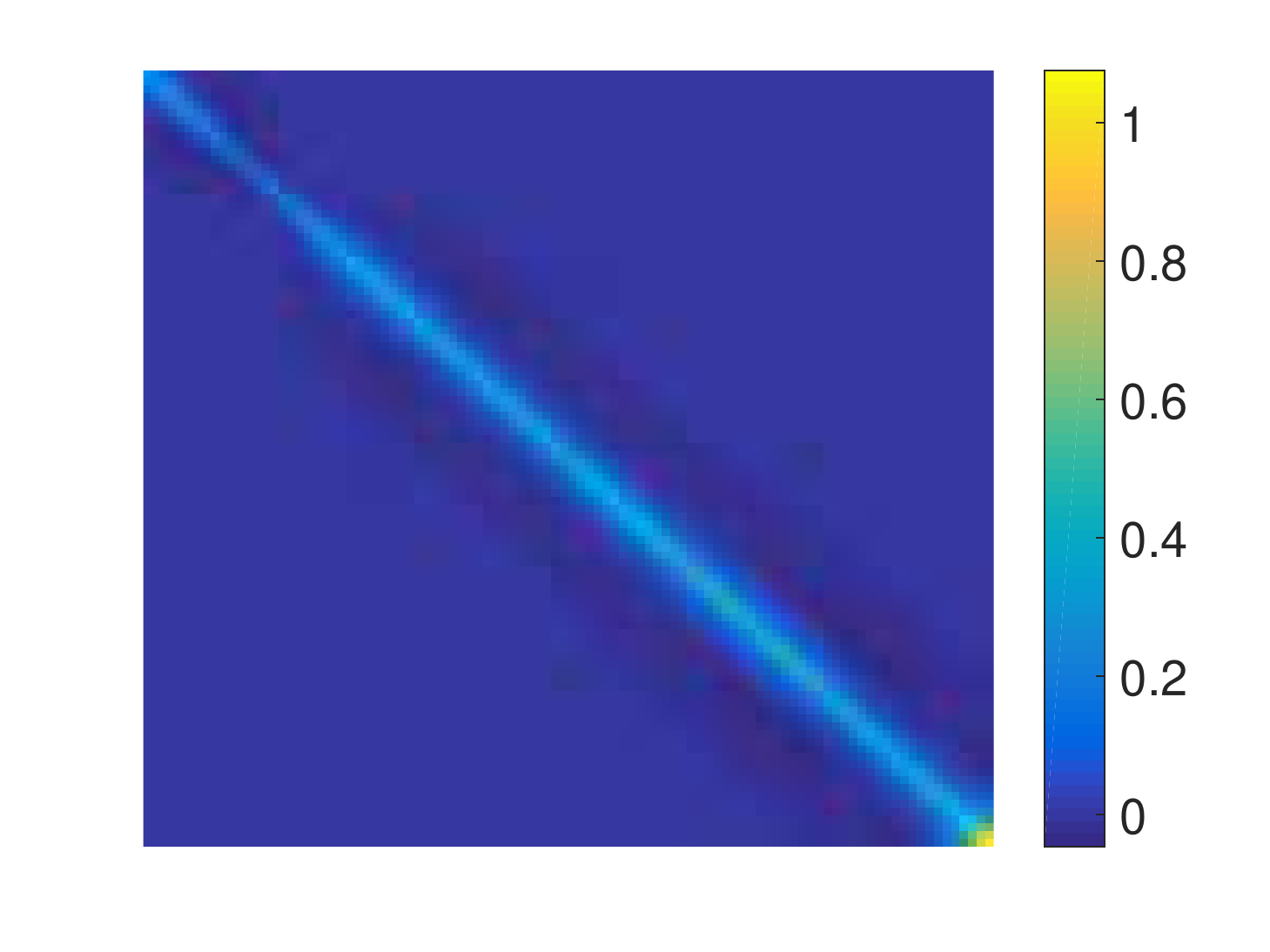}\\
(a) $\mathbf{C}_0=3.2\times 10^{-1}\mathbf{\bar C}_0$ & (b) $\mathbf{C}_0=1\times 10^{0}\mathbf{\bar C}_1$
\end{tabular}
\caption{The Gaussian approximation for \texttt{heat}.\label{fig:he_L2}}
\end{figure}

Last, we test Algorithm \ref{alg:vb} on a 2D image of size $128\times 128$. In this example, the matrix $\mathbf{A}\in \mathbb{R}^{16384\times 16384}$
is a (discrete) Gaussian blurring kernel with a blurring width $99$, variance $1.5$ and a circular boundary condition.
Since the blurring width is large, the matrix $\mathbf{A}$ is indeed low-rank, and we employ a rSVD approximation
of rank $2000$, where the rank is determined by inspecting the singular value spectrum. The true solution $\mathbf{x}^\dag$
consists of two Gaussian blobs, cf. Fig. \ref{fig:2d}(a), and thus we employ a smooth prior with $\mathbf{C}_0 = 6.00\times
10^{-2}\mathbf{L}^{-1} \mathbf{L}^{-t}$, where $\mathbf{L}=\mathbf{I}\otimes\mathbf{L}_1+\mathbf{L}_1\otimes\mathbf{I}$ is
the 2D first-order finite difference matrix. Since the problem size is very large, we restrict $\mathbf{C}$
to be a sparse matrix such that every pixel interacts only with at most four neighboring pixels. This allows
reducing the computational cost greatly. The mean
$\mathbf{\bar x}$ is nearly identical with the true solution $\mathbf{x}^\dag$, and the error is very small, cf. Fig. \ref{fig:2d}.
The structural similarity index between the mean $\mathbf{\bar x}$ and the exact solution $\mathbf{x}^\dag$ is 0.812.
We also compare the VGA solution with the MAP estimator. The $\ell_2$ error of the mean of the VGA is $9.7205$,
which is slightly smaller than that of the MAP estimator ($9.7355$). To indicate the
uncertainty around the mean $\mathbf{\bar x}$, we show in Fig. \ref{fig:2d}(f) the diagonal entries of
$\mathbf{C}$ (i.e., the variance at each pixel). The variances are relatively large at pixels where the mean $\mathbf{\bar x}$ is less accurate.

In summary, the VGA can provide a reliable point estimator together with
useful covariance estimates.

\begin{figure}[hbt!]
\centering
\begin{tabular}{ccc}
\includegraphics[width=0.33\textwidth]{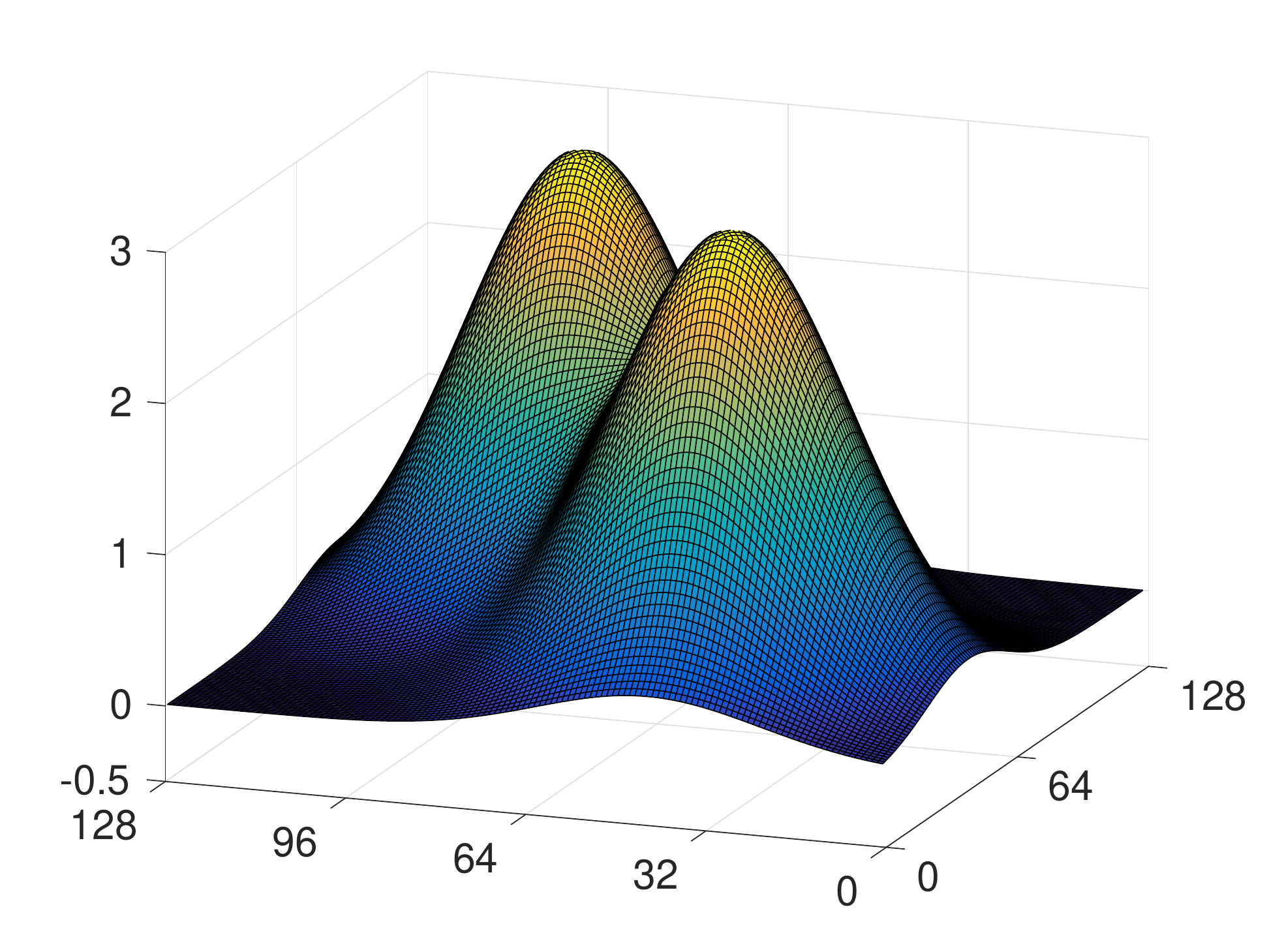} & \includegraphics[width=0.33\textwidth]{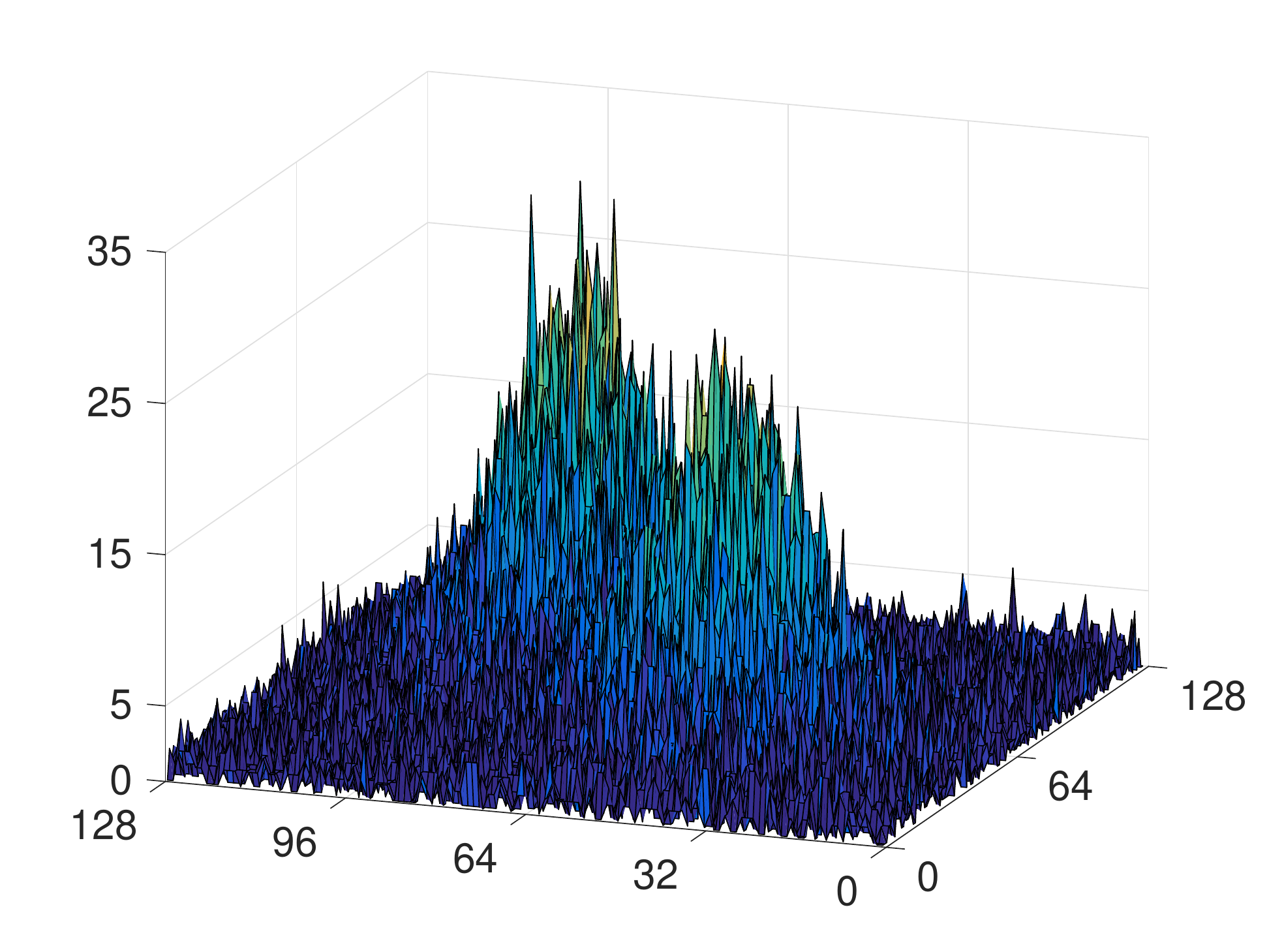} & \includegraphics[width=0.33\textwidth]{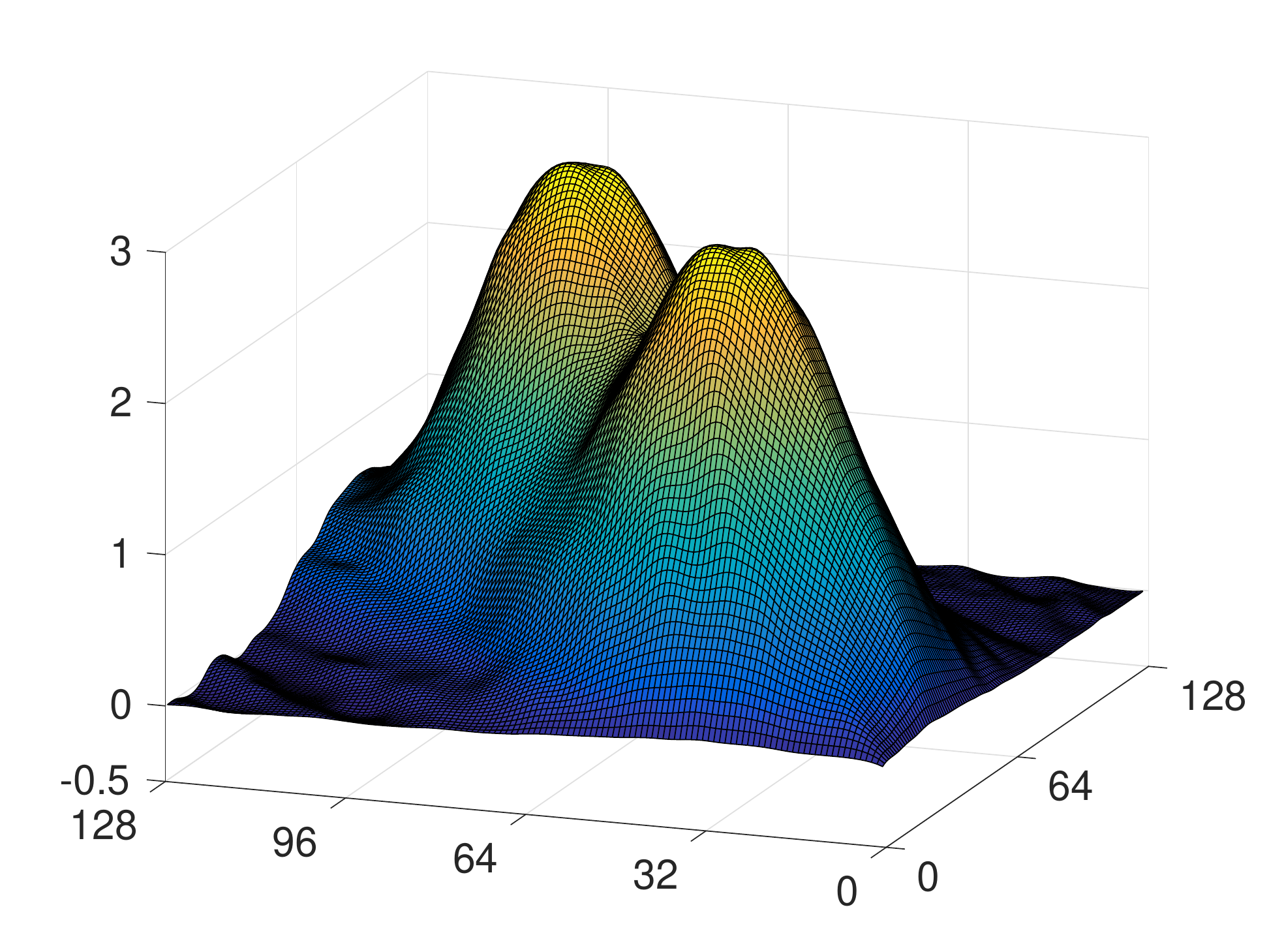} \\
(a) true solution $\mathbf{x}^\dag$ & (b) Poisson sample $\mathbf{y}$ & (c) MAP $\mathbf{x}_{\text{MAP}}$ \\
\includegraphics[width=0.33\textwidth]{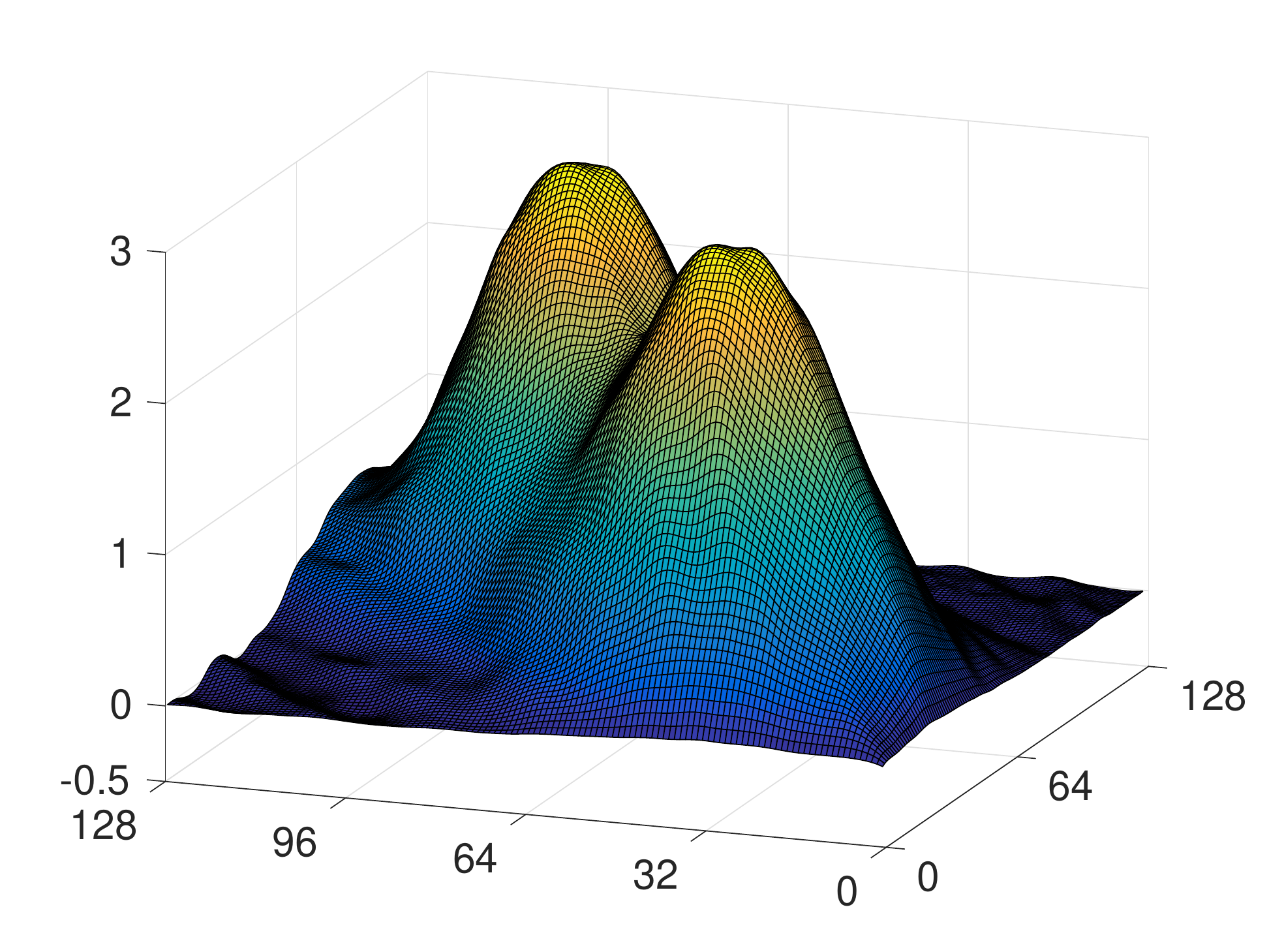}& \includegraphics[width=0.33\textwidth]{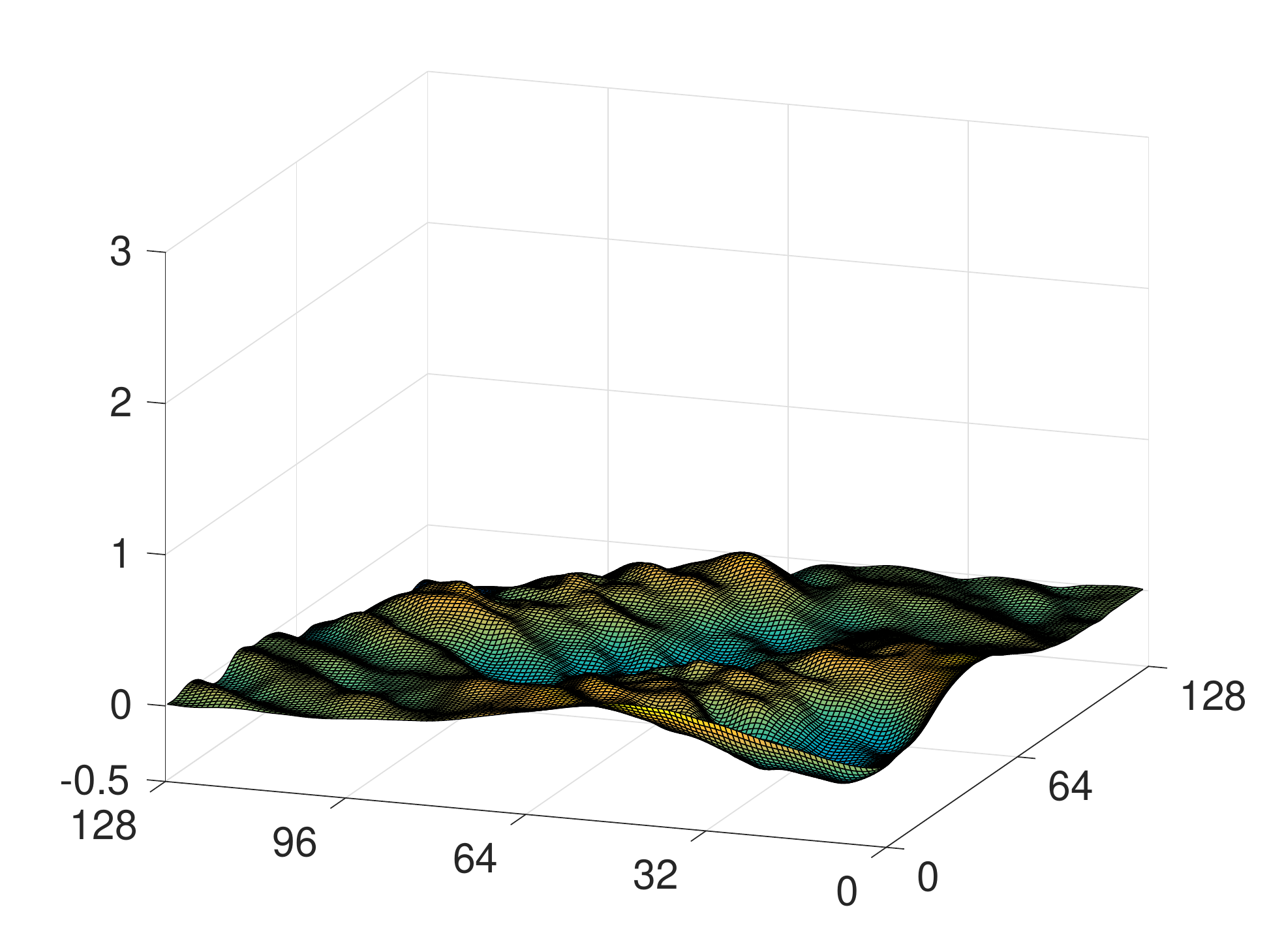} & \includegraphics[width=0.33\textwidth]{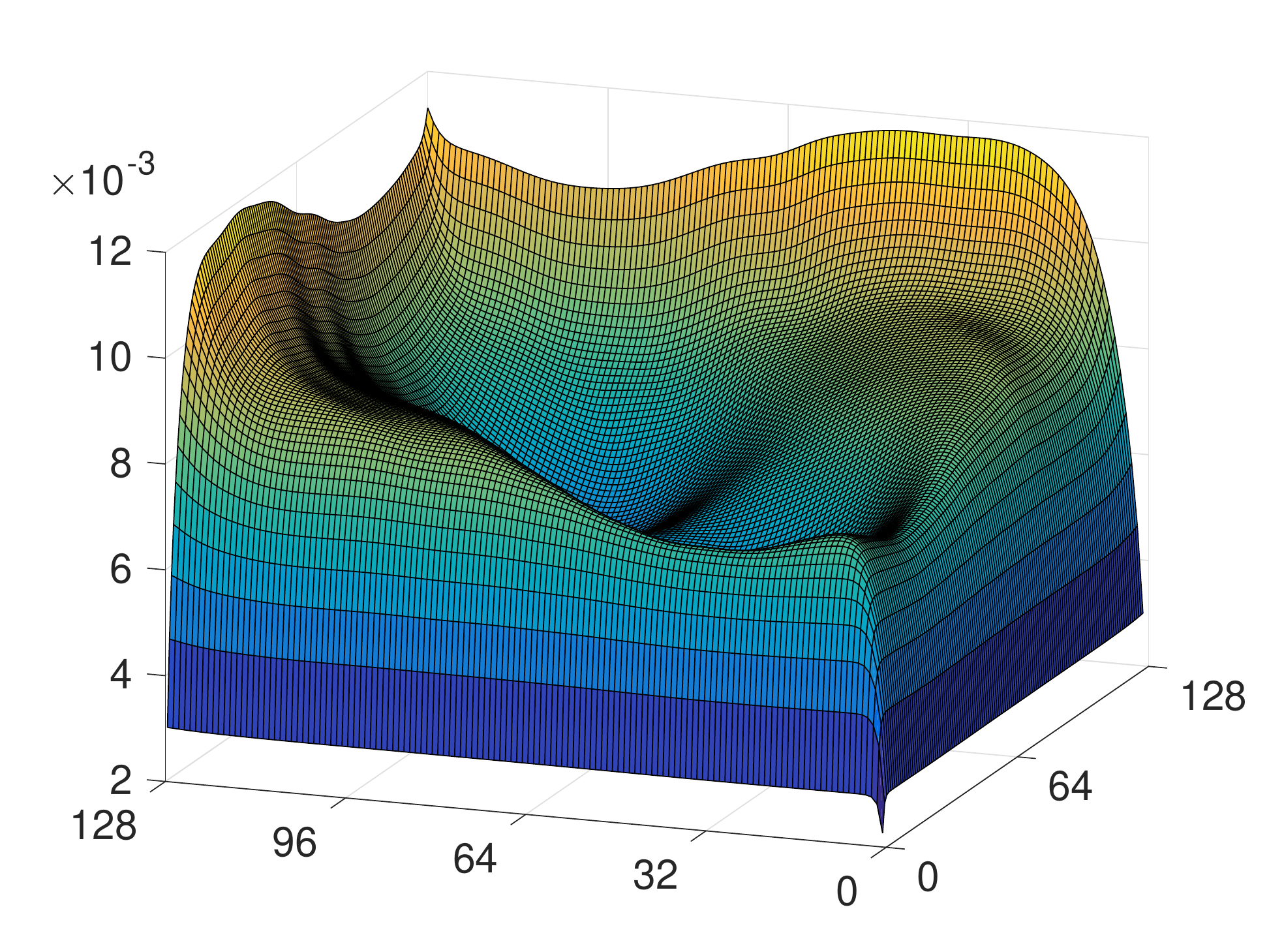}\\
(d) mean $\mathbf{\bar x}$ & (e) error $\mathbf{x}^\dag-\mathbf{\bar x}$ & (f) variance $\mathrm{diag}(\mathbf{C})$
\end{tabular}
\caption{The Gaussian approximation for image deblurring.}\label{fig:2d}
\end{figure}

\section{Conclusions}

In this work, we have presented a study of the variational Gaussian approximation to the Poisson data (under the
log linear link function) with respect to the Kullback-Leibler divergence. We derived explicit expressions for
the lower bound functional and its gradient, and proved its strict concavity and  existence and uniqueness of an
optimal Gaussian approximation. Then we developed an efficient algorithm for maximizing the functional, discussed
its convergence properties, and described practical strategies for reducing the complexity per iteration. Further,
we analyzed hierarchical modeling for automatically determining the hyperparameter using the variational Gaussian
approximation, and proposed a monotonically convergent algorithm for the joint estimation. These discussions were
supported by extensive numerical experiments.

There are several avenues for further study. First, one of fundamental issues is the quality
of the Gaussian approximation relative to the true posterior distribution. In general this
issue has been long standing, and it also remains to be analyzed for the Poisson model. Second,
the variational Gaussian approximation can be viewed as a nonstandard regularization
scheme, by also penalizing the covariance. This naturally motivates the study on its regularizing
property from the perspective of classical regularization theory, e.g., consistency and
convergence rates. Third, the approach generally gives a very
reasonable approximation. This suggests itself as a preconditioner for sampling techniques, e.g., variational
approximation as the proposal distribution (i.e., independence sampler) in the standard
Metropolis-Hastings type algorithm or as the base distribution for importance sampler. It is
expected to significantly speed up the convergence of these sampling procedures, which is
confirmed by the preliminary experiments. We plan to study these aspects in future works.

\section*{Acknowledgements}

The work of B. Jin is supported by UK EPSRC grant EP/M025160/1, and that
of C. Zhang by a departmental studentship.
\appendix

\newcommand{\bA}{{\mathbf{A}}}
\newcommand{\bC}{{\mathbf{C}}}
\newcommand{\bD}{{\mathbf{D}}}

\section{On the iteration \eqref{eqn:iter-C}}\label{app:iter-C}

In this appendix, we discuss an interesting property of the iteration \eqref{eqn:iter-C},
for the initial guess $\mathbf{C}^0=\mathbf{C}_0$. We denote the fixed point
map in \eqref{eqn:iter-C} by $\mathbf{T}$, i.e.,
\begin{equation*}
  \mathbf{T}(\mathbf{C}) = (\mathbf{C}_0^{-1}+\mathbf{A}^t\mathrm{diag}(e^{\mathbf{A\bar x}+\frac{1}{2}\mathrm{diag}(\mathbf{ACA}^t)})\mathbf{A})^{-1}.
\end{equation*}

The next result gives the antimonotonicity of the map $\mathbf{T}$ on $\mathcal{S}_m^+$, i.e., for $\mathbf{C},\tilde{\mathbf{C}}
\in\mathcal{S}_m^+$, if $0\leq \mathbf{C}\leq \tilde\bC$, then
$\mathbf{T}(\bC)\geq \mathbf{T}(\tilde \bC)$.

\begin{lemma}\label{lem:anti-monotone}
The mapping $\mathbf{T}$ is antimonotone.
\end{lemma}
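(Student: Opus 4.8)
The plan is to track the L\"owner (positive semidefinite) order through each elementary operation composing $\mathbf{T}$, exploiting that every one of them is either order preserving or order reversing. Assume $0\leq\mathbf{C}\leq\tilde{\mathbf{C}}$. First I would pass to the diagonal vectors: the $i$-th entry of $\mathrm{diag}(\mathbf{A}\mathbf{C}\mathbf{A}^t)$ is the quadratic form $\mathbf{a}_i^t\mathbf{C}\mathbf{a}_i$, so the hypothesis $\mathbf{C}\leq\tilde{\mathbf{C}}$ yields $\mathbf{a}_i^t\mathbf{C}\mathbf{a}_i\leq\mathbf{a}_i^t\tilde{\mathbf{C}}\mathbf{a}_i$ for every $i$, i.e. $\mathrm{diag}(\mathbf{A}\mathbf{C}\mathbf{A}^t)\leq\mathrm{diag}(\mathbf{A}\tilde{\mathbf{C}}\mathbf{A}^t)$ componentwise.

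Next I would push this inequality through the componentwise exponential. Since $t\mapsto e^t$ is monotonically increasing and $\mathbf{A}\bar{\mathbf{x}}$ is a fixed additive shift, the vectors obey $e^{\mathbf{A}\bar{\mathbf{x}}+\frac12\mathrm{diag}(\mathbf{A}\mathbf{C}\mathbf{A}^t)}\leq e^{\mathbf{A}\bar{\mathbf{x}}+\frac12\mathrm{diag}(\mathbf{A}\tilde{\mathbf{C}}\mathbf{A}^t)}$ componentwise. Writing $\mathbf{D}$ and $\tilde{\mathbf{D}}$ for the associated diagonal matrices $\mathrm{diag}(e^{\cdots})$, this componentwise inequality of nonnegative diagonals is precisely $\mathbf{D}\leq\tilde{\mathbf{D}}$ in the L\"owner order. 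Congruence by $\mathbf{A}$ preserves it: for any $\mathbf{v}$ one has $\mathbf{v}^t\mathbf{A}^t\mathbf{D}\mathbf{A}\mathbf{v}=(\mathbf{A}\mathbf{v})^t\mathbf{D}(\mathbf{A}\mathbf{v})\leq(\mathbf{A}\mathbf{v})^t\tilde{\mathbf{D}}(\mathbf{A}\mathbf{v})=\mathbf{v}^t\mathbf{A}^t\tilde{\mathbf{D}}\mathbf{A}\mathbf{v}$, so $\mathbf{A}^t\mathbf{D}\mathbf{A}\leq\mathbf{A}^t\tilde{\mathbf{D}}\mathbf{A}$, and adding the fixed term $\mathbf{C}_0^{-1}$ gives $\mathbf{C}_0^{-1}+\mathbf{A}^t\mathbf{D}\mathbf{A}\leq\mathbf{C}_0^{-1}+\mathbf{A}^t\tilde{\mathbf{D}}\mathbf{A}$.

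Finally I would invert. Both sides of the last inequality dominate $\mathbf{C}_0^{-1}\in\mathcal{S}_m^+$, hence lie in $\mathcal{S}_m^+$, so the order-reversing property of matrix inversion gives $(\mathbf{C}_0^{-1}+\mathbf{A}^t\mathbf{D}\mathbf{A})^{-1}\geq(\mathbf{C}_0^{-1}+\mathbf{A}^t\tilde{\mathbf{D}}\mathbf{A})^{-1}$, which is exactly $\mathbf{T}(\mathbf{C})\geq\mathbf{T}(\tilde{\mathbf{C}})$.

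The argument is a straightforward chain of monotonicity facts, so I do not anticipate a genuine obstacle; the only nonelementary ingredient is the antitonicity of matrix inversion on $\mathcal{S}_m^+$, namely $0<\mathbf{X}\leq\mathbf{Y}\Rightarrow\mathbf{X}^{-1}\geq\mathbf{Y}^{-1}$. This is classical and, if needed, I would justify it by the congruence reduction $\mathbf{Y}^{-1/2}\mathbf{X}\mathbf{Y}^{-1/2}\leq\mathbf{I}_m$, whence $\mathbf{Y}^{1/2}\mathbf{X}^{-1}\mathbf{Y}^{1/2}=(\mathbf{Y}^{-1/2}\mathbf{X}\mathbf{Y}^{-1/2})^{-1}\geq\mathbf{I}_m$, and then multiplying on both sides by $\mathbf{Y}^{-1/2}$. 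The only care needed throughout is to confirm positive definiteness at each stage so that inversion is legitimate, which holds because $\mathbf{C}_0^{-1}$ is positive definite and the added congruence terms are positive semidefinite.
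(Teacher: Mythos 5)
Your proof is correct and follows essentially the same route as the paper: both reduce the claim to the componentwise inequality $\mathrm{diag}(\mathbf{A}\mathbf{C}\mathbf{A}^t)\leq\mathrm{diag}(\mathbf{A}\tilde{\mathbf{C}}\mathbf{A}^t)$, pass it through the exponential, and then reverse the order by inverting. The only cosmetic difference is in the final step: the paper exhibits the difference directly via the resolvent identity $\mathbf{T}(\mathbf{C})-\mathbf{T}(\tilde{\mathbf{C}})=\mathbf{T}(\mathbf{C})\mathbf{A}^t(\tilde{\mathbf{D}}-\mathbf{D})\mathbf{A}\mathbf{T}(\tilde{\mathbf{C}})\geq 0$, whereas you appeal to (and reprove) the classical L\"owner antitonicity of matrix inversion after chaining the order through the congruence $\mathbf{A}^t(\cdot)\mathbf{A}$ and the shift by $\mathbf{C}_0^{-1}$.
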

\begin{proof}
Let $\mathbf{C},\tilde{\mathbf{C}}\in\mathcal{S}_m^+$. If $\mathbf{C}\leq\mathbf{\tilde C}$, then
${\rm diag}(\mathbf{ACA}^t)\leq {\rm diag}(\mathbf{A\tilde CA}^t)$
componentwise. The claim follows from the identity
$\mathbf{T}(\mathbf{C})-\mathbf{T}(\mathbf{\tilde C}) = \mathbf{T}(\mathbf{C})\mathbf{A}^t{\rm diag}(e^{\mathbf{A\bar x}+\frac{1}{2}\mathrm{diag}
  (\mathbf{A\tilde CA}^t)}-e^{\mathbf{A\bar x}+\frac{1}{2}\mathrm{diag}(\mathbf{ACA}^t)})\mathbf{A}\mathbf{T}(\mathbf{\tilde C})\geq 0.$
\end{proof}

The next result shows the monotonicity of the sequence $\{\mathbf{C}^k\}$ generated by \eqref{eqn:iter-C}.
\begin{lemma}\label{lem:mon-C}
For any initial guess $\mathbf{C}^0\in \mathcal{S}_m^+$, the sequence $\{\mathbf{C}^k\}_{k\geq0}$ generated by the iteration \eqref{eqn:iter-C} has the following
properties: (i) $\mathbf{C}^k\geq 0$ for all $k\geq 0$; (ii) $\mathbf{C}^k \leq \mathbf{C}_0$ for all $k\geq 0$;
(iii) If $\mathbf{C}^k\geq \mathbf{C}^j$ then $\mathbf{C}^{k+1}\leq \mathbf{C}^{j+1}$;
(iv) If $\mathbf{C}^{k}\geq \mathbf{C}^{j}$ then $\mathbf{C}^{k+2}\geq \mathbf{C}^{j+2}$.
\end{lemma}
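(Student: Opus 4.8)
The plan is to reduce every assertion to three structural properties of the fixed point map $\mathbf{T}$ defined above: that it produces positive definite matrices, that its range lies below $\mathbf{C}_0$, and that it is antimonotone (Lemma \ref{lem:anti-monotone}). Once these are isolated, (i)--(iv) follow with almost no computation.

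First I would establish (i) and (ii) simultaneously from the explicit form $\mathbf{C}^{k+1}=(\mathbf{C}_0^{-1}+\mathbf{A}^t\mathrm{diag}(e^{\mathbf{A}\bar{\mathbf{x}}+\frac{1}{2}\mathrm{diag}(\mathbf{A}\mathbf{C}^k\mathbf{A}^t)})\mathbf{A})^{-1}$. For any symmetric $\mathbf{C}^k$, the diagonal matrix $\mathrm{diag}(e^{\mathbf{A}\bar{\mathbf{x}}+\frac{1}{2}\mathrm{diag}(\mathbf{A}\mathbf{C}^k\mathbf{A}^t)})$ has strictly positive entries, so $\mathbf{A}^t\mathrm{diag}(\cdots)\mathbf{A}\geq 0$ and therefore $\mathbf{C}_0^{-1}+\mathbf{A}^t\mathrm{diag}(\cdots)\mathbf{A}\geq\mathbf{C}_0^{-1}>0$. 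Inverting a positive definite matrix returns a positive definite matrix, which gives $\mathbf{C}^{k+1}\in\mathcal{S}_m^+$; combined with $\mathbf{C}^0\in\mathcal{S}_m^+$, a one-line induction yields (i). For (ii) I would invoke the order-reversing property of inversion on the positive definite cone (operator monotonicity of $\mathbf{X}\mapsto-\mathbf{X}^{-1}$): since $\mathbf{C}_0^{-1}+\mathbf{A}^t\mathrm{diag}(\cdots)\mathbf{A}\geq\mathbf{C}_0^{-1}$, taking inverses reverses the inequality, so $\mathbf{C}^{k+1}=\mathbf{T}(\mathbf{C}^k)\leq\mathbf{C}_0$. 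Thus every iterate output by $\mathbf{T}$ sits below $\mathbf{C}_0$, which secures (ii) for all $k\geq 1$, and for $k=0$ precisely by the initialization $\mathbf{C}^0=\mathbf{C}_0$.

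The remaining two statements are purely order-theoretic consequences of antimonotonicity. Statement (iii) is immediate: if $\mathbf{C}^k\geq\mathbf{C}^j$, then applying Lemma \ref{lem:anti-monotone} with $\tilde{\mathbf{C}}=\mathbf{C}^k$ and $\mathbf{C}=\mathbf{C}^j$ gives $\mathbf{C}^{k+1}=\mathbf{T}(\mathbf{C}^k)\leq\mathbf{T}(\mathbf{C}^j)=\mathbf{C}^{j+1}$. For (iv) I would simply compose two applications: from (iii) we already have $\mathbf{C}^{k+1}\leq\mathbf{C}^{j+1}$, and applying Lemma \ref{lem:anti-monotone} once more (now with $\mathbf{C}^{j+1}$ as the larger matrix) yields $\mathbf{C}^{k+2}=\mathbf{T}(\mathbf{C}^{k+1})\geq\mathbf{T}(\mathbf{C}^{j+1})=\mathbf{C}^{j+2}$. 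In short, $\mathbf{T}$ reverses the partial order while $\mathbf{T}\circ\mathbf{T}$ preserves it.

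I do not expect a genuine obstacle here once Lemma \ref{lem:anti-monotone} is available; the proof is essentially bookkeeping. The only points demanding care are getting the \emph{direction} of the inequality right under matrix inversion, and tracking the index scope in (ii): the uniform bound $\mathbf{C}^k\leq\mathbf{C}_0$ is automatic for $k\geq 1$ but at $k=0$ relies on choosing $\mathbf{C}^0=\mathbf{C}_0$, consistent with the initialization fixed at the start of this appendix.
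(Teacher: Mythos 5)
Your proof is correct and follows exactly the route the paper takes: (i)--(ii) from the explicit form of the update together with order reversal under inversion, and (iii)--(iv) by applying the antimonotonicity of $\mathbf{T}$ (Lemma \ref{lem:anti-monotone}) once and twice, respectively. The paper simply declares (i)--(ii) obvious and (iii)--(iv) direct consequences of antimonotonicity, so your write-up is just a fleshed-out version of the same argument, and your remark that (ii) at $k=0$ really requires $\mathbf{C}^0\leq\mathbf{C}_0$ (e.g.\ the initialization $\mathbf{C}^0=\mathbf{C}_0$) is a fair observation about the statement as given.
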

\begin{proof}
Properties (i) and (ii) are obvious. Properties (iii) and (iv) are direct consequences of
the fact that the map $\mathbf{T}$ is antimonotone on $\mathcal{S}_m^+$, cf. Lemma \ref{lem:anti-monotone}.
\end{proof}

The next result shows that the sequence constitutes two subsequences, each converging to a fixed point of $\mathbf{T}^2$,
which implies either a periodic orbit of period 2 of the map $\mathbf{T}$ or a fixed point of $\mathbf{T}$,
\begin{theorem}
With the initial guess $\mathbf{C}^0=\bC_0$, the sequence $\{\mathbf{C}^k\}_{k\geq 0}$ generated by iteration \eqref{eqn:iter-C}
converges to a fixed-point of $\mathbf{T}^2$.
\end{theorem}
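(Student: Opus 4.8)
The plan is to exploit the antimonotonicity of $\mathbf{T}$ (Lemma \ref{lem:anti-monotone}) to reduce the claim to the monotone convergence of two interlaced subsequences. First I would observe that the composition of two antimonotone maps is order-preserving, so $\mathbf{T}^2$ is monotone on $\mathcal{S}_m^+$; this is precisely property (iv) of Lemma \ref{lem:mon-C}, whereas property (iii) encodes the antimonotonicity of $\mathbf{T}$ itself. The special role of the initial guess $\mathbf{C}^0=\mathbf{C}_0$ is that it furnishes the seed comparison $\mathbf{C}^1=\mathbf{T}(\mathbf{C}_0)\le \mathbf{C}_0=\mathbf{C}^0$, which is immediate from property (ii). This single ordering is what sets the monotone machinery in motion; for a generic initial guess it need not hold.

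Next I would split the sequence into its even part $\{\mathbf{C}^{2k}\}$ and odd part $\{\mathbf{C}^{2k+1}\}$ and show they are monotone in opposite directions. Using property (ii) we have $\mathbf{C}^2\le \mathbf{C}_0=\mathbf{C}^0$, so $\mathbf{C}^0\ge \mathbf{C}^2$; applying property (iv) repeatedly then yields $\mathbf{C}^0\ge \mathbf{C}^2\ge \mathbf{C}^4\ge\cdots$, so the even subsequence is decreasing. For the odd subsequence, property (iii) applied to $\mathbf{C}^0\ge \mathbf{C}^2$ gives $\mathbf{C}^1\le \mathbf{C}^3$, and property (iv) propagates this to $\mathbf{C}^1\le \mathbf{C}^3\le \mathbf{C}^5\le\cdots$, so the odd subsequence is increasing. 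By properties (i)--(ii), both subsequences remain trapped in the order interval $\{\mathbf{C}:0\le \mathbf{C}\le \mathbf{C}_0\}$.

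It then remains to pass to the limit and identify the limit points. A Loewner-monotone, order-bounded sequence of symmetric matrices converges: for the decreasing even subsequence, $\mathbf{v}^t\mathbf{C}^{2k}\mathbf{v}$ is nonincreasing and bounded below by $0$ for every $\mathbf{v}$, hence convergent, and polarization upgrades this to entrywise (thus norm) convergence; the increasing odd subsequence is handled identically using the upper bound $\mathbf{C}_0$. Denoting the two limits by $\mathbf{C}^{\mathrm{e}}$ and $\mathbf{C}^{\mathrm{o}}$ and invoking the continuity of $\mathbf{T}$ (and hence of $\mathbf{T}^2$) on $\mathcal{S}_m^+$, I would conclude $\mathbf{T}^2(\mathbf{C}^{\mathrm{e}})=\mathbf{C}^{\mathrm{e}}$ and $\mathbf{T}^2(\mathbf{C}^{\mathrm{o}})=\mathbf{C}^{\mathrm{o}}$, so each subsequence converges to a fixed point of $\mathbf{T}^2$. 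If moreover $\mathbf{C}^{\mathrm{e}}=\mathbf{C}^{\mathrm{o}}$, then taking limits in $\mathbf{C}^{2k+1}=\mathbf{T}(\mathbf{C}^{2k})$ shows the common limit is a genuine fixed point of $\mathbf{T}$, recovering convergence of the whole sequence.

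The main obstacle I anticipate is twofold. Technically, the monotone convergence in the Loewner order must be justified carefully, but the quadratic-form-plus-polarization argument above handles it cleanly in finite dimensions. Conceptually, the harder point is whether the two limits must coincide: the monotonicity argument only produces a period-two orbit $\{\mathbf{C}^{\mathrm{e}},\mathbf{C}^{\mathrm{o}}\}$ of $\mathbf{T}$, and ruling out a genuine oscillation, i.e. proving $\mathbf{C}^{\mathrm{e}}=\mathbf{C}^{\mathrm{o}}$, would require a contraction or uniqueness estimate that order-preservation alone does not supply. I therefore expect the theorem, as stated, to assert only convergence to a fixed point of $\mathbf{T}^2$, leaving the sharper identification with a fixed point of $\mathbf{T}$ as the delicate residual issue.
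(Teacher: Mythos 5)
Your proposal is correct and follows essentially the same route as the paper's proof: seed the ordering with $\mathbf{C}^2\le\mathbf{C}^0$ from Lemma \ref{lem:mon-C}(ii), propagate it with properties (iii)--(iv) to get a decreasing even subsequence and an increasing odd subsequence, and conclude by boundedness and monotone convergence that each converges to a fixed point of $\mathbf{T}^2$. The extra details you supply (polarization to upgrade quadratic-form convergence to norm convergence, continuity of $\mathbf{T}$, and the observation that coincidence of the two limits is the genuinely delicate residual issue) are all consistent with the paper, which likewise leaves the identification of a true fixed point of $\mathbf{T}$ to a remark.
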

\begin{proof}
Lemma \ref{lem:mon-C}(ii) implies
\begin{equation}\label{eqn:iterC-2}
\mathbf{C}^2\leq \mathbf{C}^0,
\end{equation}
so we can use Lemma \ref{lem:mon-C}(iv) inductively to argue that $\{\mathbf{C}^{2k}\}_{k\geq 0}$ is a decreasing
sequence. From \eqref{eqn:iterC-2} and Lemma \ref{lem:mon-C}(iii), we deduce $\mathbf{C}^1\leq \mathbf{C}^3$,
which together with Lemma \ref{lem:mon-C}(iv) implies that the sequence $\{\mathbf{C}^{2k+1}\}_{k\geq 0}$ is
increasing. By the boundedness and monotonicity, both $\{\mathbf{C}^{2k}\}_{k\geq 0}$ and $\{\mathbf{C}^{2k+1}\}_{k\geq 0}$
converge, with the limit $\mathbf{C}^*$ and $\mathbf{C}^{**}$, respectively. These are the limits of the
fixed point map $\mathbf{T}^2$.
\end{proof}

\begin{remark}
By Lemma \ref{lem:mon-C}, $\mathbf{C}^*\geq \mathbf{C}^{**}$, and if $\mathbf{C}^*=\mathbf{C}^{**}$,
the whole sequence converges. Generally, the interval of matrices $[\mathbf{C}^{**},\mathbf{C}^*]$ provides a lower and
sharp bounds for the fixed point of the iteration \eqref{eqn:iter-C} (which is a priori known to be unique and to exist).
By repeating the argument in \cite[Theorem 2.2]{ElSayed:2001}, one may also examine the convergence of the sequence for
the initial guess either  $\mathbf{C}^0<\mathbf{C}^{**}$ or $\mathbf{C}^0>\mathbf{C}^*$.
\end{remark}

\section{Differentiability of the regularized solution}\label{app:sensitivity}
In this part, we discuss the differentiability of the regularized solution $(\mathbf{\bar x}_\alpha,\mathbf{C}_\alpha)$
in $\alpha$.  For simplicity, we omit the subscript $\alpha$. By differentiating \eqref{eqn:opt} in $\alpha$
and chain rule, we obtain (with $\dot {\bar{\mathbf x}} =\frac{d\mathbf{\bar x}}{d\alpha}$
and $\dot{\mathbf{ C}}=\frac{d\mathbf{ C}}{d\alpha}$):
\begin{equation}\label{eqn:sensitivity}
  \begin{aligned}
     (\mathbf{A}^t\mathbf{DA}+ \alpha \bC_0^{-1}) \dot{{\bar{\mathbf x}}} + \tfrac{1}{2} \mathbf{A}^t\mathbf{D}\mathrm{diag}(\mathbf{A}\dot{\mathbf{C}}\mathbf{A}^t) &= - \bar\bC_0^{-1}(\mathbf{\bar{x}}-\boldsymbol\mu_0),\\
	(\mathbf{C}^{-1}\dot{\mathbf{C}}\emph{} \mathbf{C}^{-1}+\tfrac{1}{2}\mathbf{A}^t \mathbf{D}^\frac{1}{2}\mathrm{diag}\mathrm{diag}(\mathbf{A}\dot {\mathbf{C}}\mathbf{A}^t)\mathbf{D}^\frac{1}{2}\mathbf{A}) + \mathbf{A}^t\mathbf{D}^\frac{1}{2}\mathrm{diag}(\mathbf{A}\dot{\bar{ \mathbf x}})\mathbf{D}^\frac{1}{2}\mathbf{A} &= - \bar\bC_0^{-1},
  \end{aligned}
\end{equation}
where $\mathbf D=\mathrm{diag}(e^{\mathbf{A{\bar{x}}}+\frac{1}{2}{\rm diag}(\mathbf{ACA}^t)})\in\mathbb{R}^{n\times n}$ is a diagonal matrix.
This constitutes a coupled linear system for $(\dot {\bar {\mathbf x}},\dot{\mathbf{C}})$. The next result
gives its unique solvability.
\begin{theorem}\label{thm:sensitivity-sol}
The sensitivity system \eqref{eqn:sensitivity} is uniquely solvable.
\end{theorem}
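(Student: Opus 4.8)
The plan is to identify the linear operator on the left of \eqref{eqn:sensitivity} with the (negative) Hessian of the strictly concave functional $F_\alpha(\mathbf{\bar x},\mathbf{C})$ at its maximizer $(\mathbf{\bar x}_\alpha,\mathbf{C}_\alpha)$, and to deduce unique solvability from the strict concavity established in Theorem \ref{thm:concavity}. Recall that \eqref{eqn:sensitivity} is obtained by differentiating the optimality system \eqref{eqn:barx}--\eqref{eqn:C} in $\alpha$, so its coefficient operator is the second derivative $D^2F_\alpha$, except that the second block stems from \eqref{eqn:C}, which is twice the gradient $\partial F/\partial\mathbf{C}$; rescaling that block (and its right-hand side) by $\tfrac12$ renders the whole operator, call it $\mathcal{L}$, self-adjoint on the finite-dimensional Hilbert space $\mathbb{R}^m\times\mathcal{S}_m$ equipped with the Euclidean and trace inner products. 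Since rescaling a block row by a positive constant does not affect solvability, it suffices to prove that the symmetrized $\mathcal{L}$ is invertible.

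First I would compute the quadratic form associated with $\mathcal{L}$. Writing $d_i=(\mathbf{a}_i,\mathbf{\bar x})+\tfrac12\mathbf{a}_i^t\mathbf{C}\mathbf{a}_i$ and $D_{ii}=e^{d_i}$, and using the self-adjointness of the operator $\mathrm{diag}(\cdot)$, i.e. the identity $(\mathbf{u},\mathrm{diag}(\mathbf{M}))=(\mathrm{diag}(\mathbf{u}),\mathbf{M})$ for $\mathbf{u}\in\mathbb{R}^n$ and $\mathbf{M}\in\mathbb{R}^{n\times n}$, together with the cyclic property of the trace, the two mixed contributions pair into a single symmetric cross term, and a direct computation yields
\begin{equation*}
  \big(\mathcal{L}(\dot{\bar{\mathbf{x}}},\dot{\mathbf{C}}),(\dot{\bar{\mathbf{x}}},\dot{\mathbf{C}})\big)
  = \sum_{i=1}^n D_{ii}\Big[(\mathbf{a}_i,\dot{\bar{\mathbf{x}}})+\tfrac12\mathbf{a}_i^t\dot{\mathbf{C}}\mathbf{a}_i\Big]^2
  + \alpha\,\dot{\bar{\mathbf{x}}}^t\bar{\mathbf{C}}_0^{-1}\dot{\bar{\mathbf{x}}}
  + \tfrac12\,\mathrm{tr}(\mathbf{C}^{-1}\dot{\mathbf{C}}\mathbf{C}^{-1}\dot{\mathbf{C}}),
\end{equation*}
which is exactly the negative of the Hessian quadratic form of $F_\alpha$.

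Next I would read off coercivity. Each summand is nonnegative: the first is a sum of squares since $D_{ii}>0$; the second is nonnegative because $\bar{\mathbf{C}}_0^{-1}\in\mathcal{S}_m^+$ and $\alpha>0$; and the last equals the squared Frobenius norm of the symmetric matrix $\mathbf{C}^{-1/2}\dot{\mathbf{C}}\mathbf{C}^{-1/2}$, where $\mathbf{C}^{-1/2}$ is the symmetric positive definite square root of $\mathbf{C}^{-1}$. Moreover the form vanishes only when $\dot{\bar{\mathbf{x}}}=\mathbf{0}$ (forced by the strictly positive term $\alpha\,\dot{\bar{\mathbf{x}}}^t\bar{\mathbf{C}}_0^{-1}\dot{\bar{\mathbf{x}}}$) and $\dot{\mathbf{C}}=\mathbf{0}$ (forced by the last term). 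Hence $\mathcal{L}$ is coercive, therefore injective; being self-adjoint on a finite-dimensional space it is bijective, so \eqref{eqn:sensitivity} is uniquely solvable. Coercivity also furnishes the uniform bound on $\mathcal{L}^{-1}$ invoked after the definition of $\partial\mathbf{S}(\mathbf{C})$ in Section \ref{sec:algorithm}.

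The main obstacle will be the bookkeeping in the second step: matching the somewhat awkwardly written left-hand side of \eqref{eqn:sensitivity} --- with its nested $\mathrm{diag}\,\mathrm{diag}(\cdot)$ and the symmetric factors $\mathbf{D}^{1/2}(\cdot)\mathbf{D}^{1/2}$ --- to the clean quadratic form above. Concretely, one must check that the term $\tfrac12\mathbf{A}^t\mathbf{D}\,\mathrm{diag}(\mathbf{A}\dot{\mathbf{C}}\mathbf{A}^t)$ in the first block and the term $\mathbf{A}^t\mathbf{D}^{1/2}\mathrm{diag}(\mathbf{A}\dot{\bar{\mathbf{x}}})\mathbf{D}^{1/2}\mathbf{A}$ in the second are genuinely adjoint after the factor-$\tfrac12$ rescaling, and that $\mathbf{A}^t\mathbf{D}^{1/2}\mathrm{diag}\,\mathrm{diag}(\mathbf{A}\dot{\mathbf{C}}\mathbf{A}^t)\mathbf{D}^{1/2}\mathbf{A}$ collapses to $\sum_i D_{ii}(\mathbf{a}_i^t\dot{\mathbf{C}}\mathbf{a}_i)\mathbf{a}_i\mathbf{a}_i^t$, so that its pairing with $\dot{\mathbf{C}}$ reproduces the quadratic-in-$\dot{\mathbf{C}}$ part of the squared bracket. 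All of this rests on the self-adjointness of $\mathrm{diag}$ and the cyclic property of the trace; once it is verified, positive definiteness and hence unique solvability are immediate.
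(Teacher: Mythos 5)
Your proof is correct, but it takes a genuinely different route from the paper's. The paper keeps the block system asymmetric, eliminates $\dot{\bar{\mathbf{x}}}$ via the first equation, and shows that the resulting Schur complement acting on $\dot{\mathbf{C}}$ is positive definite; the crux there is the matrix inequality $\mathbf{I}_n-\bar{\mathbf{A}}(\bar{\mathbf{A}}^t\bar{\mathbf{A}}+\alpha\bar{\mathbf{C}}_0^{-1})^{-1}\bar{\mathbf{A}}^t>0$ with $\bar{\mathbf{A}}=\mathbf{D}^{1/2}\mathbf{A}$, which makes the indefinite-looking difference of the two $\dot{\mathbf{C}}$-quadratic terms coercive. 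You instead rescale the second block by $\tfrac12$ to symmetrize the full operator and complete the square directly: your identity
\begin{equation*}
  \big(\mathcal{L}(\dot{\bar{\mathbf{x}}},\dot{\mathbf{C}}),(\dot{\bar{\mathbf{x}}},\dot{\mathbf{C}})\big)
  = \sum_{i=1}^n D_{ii}\Big[(\mathbf{a}_i,\dot{\bar{\mathbf{x}}})+\tfrac12\mathbf{a}_i^t\dot{\mathbf{C}}\mathbf{a}_i\Big]^2
  + \alpha\,\dot{\bar{\mathbf{x}}}^t\bar{\mathbf{C}}_0^{-1}\dot{\bar{\mathbf{x}}}
  + \tfrac12\,\mathrm{tr}(\mathbf{C}^{-1}\dot{\mathbf{C}}\mathbf{C}^{-1}\dot{\mathbf{C}})
\end{equation*}
checks out (the cross terms from the two blocks each contribute $\tfrac12\sum_iD_{ii}(\mathbf{a}_i,\dot{\bar{\mathbf{x}}})(\mathbf{a}_i^t\dot{\mathbf{C}}\mathbf{a}_i)$ and combine with the pure quadratic pieces into the perfect square), and positive definiteness then forces $\dot{\bar{\mathbf{x}}}=\mathbf{0}$ and $\dot{\mathbf{C}}=\mathbf{0}$ from the last two terms. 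Amusingly, this is essentially the same algebra the paper performs later, in the proof of Theorem \ref{thm:penalty-derivative} (identity \eqref{eqn:deriv-psi} followed by Cauchy--Schwarz, which is just your completed square in disguise), so your argument unifies the two results; it also avoids the Schur complement and the auxiliary matrix inequality entirely. Two cosmetic remarks: injectivity of a square linear map on a finite-dimensional space already gives bijectivity, so the appeal to self-adjointness in that final step is unnecessary (though harmless); and you should state explicitly that you work on $\mathbb{R}^m\times\mathcal{S}_m$ with $\mathcal{S}_m$ the symmetric matrices, since $\dot{\mathbf{C}}$ is symmetric and the operator preserves that subspace.
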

\begin{proof}
Since the system \eqref{eqn:sensitivity} is linear and square, it suffices to show that the homogeneous problem has only a zero solution.
To this end, by eliminating the variable $\dot{\bar{\mathbf x}}$ from the second line in \eqref{eqn:sensitivity} using the first
line, we obtain the Schur complement for $\dot{\mathbf{C}}$:
\begin{equation*}
  \mathbf{C}^{-1}\dot{\mathbf{C}}\mathbf{C}^{-1}+\tfrac{1}{2}\mathbf{A}^t \mathbf{D}^\frac{1}{2}\mathrm{diag}\mathrm{diag}(\mathbf{A}\dot{\mathbf {C}}\mathbf{A}^t)\mathbf{D}^\frac{1}{2}\mathbf{A} - \tfrac{1}{2}\mathbf{A}^t\mathbf{D}^\frac{1}{2}\mathrm{diag}(\mathbf{A}(\mathbf{A}^t\mathbf{DA}+ \alpha \bar\bC_0^{-1})^{-1}\mathbf{A}^t\mathbf{D}\mathrm{diag}(\mathbf{A}\dot{\mathbf{C}}\mathbf{A}^t))\mathbf{D}^\frac{1}{2}\mathbf{A}.
\end{equation*}
For any fixed $\mathbf{C}$, this defines a linear map on $\mathbb{R}^{m\times m}$.
Next we show its invertibility. To this end, we take inner product the map with $\dot{\mathbf{C}}$,
and show its positivity. Clearly, the first term is strictly positive. Thus it suffices to consider the last two terms.
By the cyclic property of trace, with $\mathbf{d}=\mathrm{diag}(\mathbf{D})\in\mathbb{R}^n$, we have
\begin{equation*}
  \begin{aligned}
     &({\bA}^t \mathrm{diag}(\mathbf{d}\circ\mathrm{diag}(\mathbf{A}\dot{\mathbf C}\mathbf{A}^t))\mathbf{A},\dot{\mathbf C})
     =\mathrm{tr}({\bA}^t \mathrm{diag}(\mathbf{d}\circ\mathrm{diag}(\mathbf{A}\dot{\mathbf C}\mathbf{A}^t))\mathbf{A}\dot\bC)\\
     =& (\mathbf{D}{\rm diag}(\mathrm{diag}(\bA\dot \bC\bA^t)), \bA\dot\bC\bA^t)
      =(\mathbf{D}\mathrm{diag}(\bA\dot \bC\bA^t), \mathrm{diag}(\bA\dot \bC\bA^t))=(\bar{\mathbf{e}},\bar{\mathbf{e}}),
  \end{aligned}
\end{equation*}
where $\mathbf{\bar e}=\mathbf{D}^\frac{1}{2}\mathrm{diag}(\bA\dot \bC\bA^t)\in\mathbb{R}^n$.
Similarly, by letting $\bar \bA = \bD^\frac{1}{2}\bA$, we have
\begin{equation*}
  \begin{aligned}
     &(\mathbf{A}^t\mathbf{D}\mathrm{diag}(\mathbf{A(A}^t\mathbf{DA}+ \alpha\bar\bC^{-1}_0)^{-1}\mathbf{A}^t\mathbf{D}\mathrm{diag}(\mathbf{A}\dot {\mathbf{C}}{\mathbf A}^t))\mathbf{A},\dot {\mathbf{C}})\\
      =&(\bD\mathrm{diag}(\bA(\bA^t\bD\bA+ \alpha\bar\bC_0^{-1})^{-1}\bA^t\bD\mathrm{diag}(\bA\dot \bC\bA^t)),\bA\dot \bC\bA^t)\\
     =&(\bar \bA(\bar \bA^t \bar \bA+ \alpha \bar\bC_0^{-1})^{-1}\bar \bA^t\bar{\mathbf{e}}, \bar{\mathbf{e}}).
  \end{aligned}
\end{equation*}
Since $\mathbf{I}_n-\bar \bA(\bar \bA^t \bar \bA+ \alpha \bar{\mathbf{C}}_0^{-1})^{-1}\bar \bA^t>0$,
the associated bilinear form is coercive on $\mathcal{S}_m^+$. Thus the Schur complement is invertible, and
the system \eqref{eqn:sensitivity} has a unique solution.
\end{proof}

\begin{corollary}
For any rank deficient $\bA$, $\dot \bC\neq\mathbf{ 0}$.
\end{corollary}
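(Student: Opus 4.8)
The plan is to argue by contradiction, exploiting a simple rank count. Suppose, contrary to the claim, that $\dot{\mathbf{C}}=\mathbf{0}$. I would substitute this into the second equation of the sensitivity system \eqref{eqn:sensitivity}: the two terms carrying $\dot{\mathbf{C}}$, namely $\mathbf{C}^{-1}\dot{\mathbf{C}}\mathbf{C}^{-1}$ and $\tfrac{1}{2}\mathbf{A}^t\mathbf{D}^{\frac{1}{2}}\mathrm{diag}\,\mathrm{diag}(\mathbf{A}\dot{\mathbf{C}}\mathbf{A}^t)\mathbf{D}^{\frac{1}{2}}\mathbf{A}$, both vanish, leaving the identity
\begin{equation*}
\mathbf{A}^t\mathbf{D}^{\frac{1}{2}}\mathrm{diag}(\mathbf{A}\dot{\bar{\mathbf{x}}})\mathbf{D}^{\frac{1}{2}}\mathbf{A} = -\bar{\mathbf{C}}_0^{-1}.
\end{equation*}

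The key observation is that the left-hand side has the sandwich form $\mathbf{A}^t\mathbf{M}\mathbf{A}$ with the $n\times n$ matrix $\mathbf{M}=\mathbf{D}^{\frac{1}{2}}\mathrm{diag}(\mathbf{A}\dot{\bar{\mathbf{x}}})\mathbf{D}^{\frac{1}{2}}$, so its rank is at most $\mathrm{rank}(\mathbf{A})$. When $\mathbf{A}$ is rank deficient we have $\mathrm{rank}(\mathbf{A})<m$, and hence the left-hand side is a singular $m\times m$ matrix. On the other hand, since $\bar{\mathbf{C}}_0\in\mathcal{S}_m^+$, its inverse $\bar{\mathbf{C}}_0^{-1}$ is positive definite and $-\bar{\mathbf{C}}_0^{-1}$ has full rank $m$. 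A singular matrix cannot equal a nonsingular one, which is the desired contradiction; therefore $\dot{\mathbf{C}}\neq\mathbf{0}$.

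Since the corollary is an almost immediate consequence of Theorem \ref{thm:sensitivity-sol} (which guarantees that $(\dot{\bar{\mathbf{x}}},\dot{\mathbf{C}})$ is well defined) together with this rank count, there is no serious obstacle to overcome. The only point requiring a little care is the bookkeeping of which terms survive when $\dot{\mathbf{C}}$ is set to zero: one must verify that the surviving $\dot{\bar{\mathbf{x}}}$-term genuinely retains the two-sided factor $\mathbf{A}^t(\cdot)\mathbf{A}$, so that the bound by $\mathrm{rank}(\mathbf{A})$ applies, and that the right-hand side $-\bar{\mathbf{C}}_0^{-1}$ is invertible — both of which are immediate from the structure of \eqref{eqn:sensitivity} and the positive definiteness of $\bar{\mathbf{C}}_0$.
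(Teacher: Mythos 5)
Your argument is correct and is essentially identical to the paper's own proof: both set $\dot{\mathbf{C}}=\mathbf{0}$ in the second sensitivity equation, observe that the surviving term $\mathbf{A}^t\mathbf{D}^{\frac{1}{2}}\mathrm{diag}(\mathbf{A}\dot{\bar{\mathbf{x}}})\mathbf{D}^{\frac{1}{2}}\mathbf{A}$ has rank at most $\mathrm{rank}(\mathbf{A})<m$, and derive a contradiction with the full-rank right-hand side $-\bar{\mathbf{C}}_0^{-1}$. No discrepancies to report.
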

\begin{proof}
If $\dot\bC=\mathbf{0}$, the second equation in \eqref{eqn:sensitivity} reduces to
$\bA^t\bD^\frac{1}{2}\mathrm{diag}(\bA\dot{\bar {\mathbf{x}}})\bD^\frac{1}{2}\bA = - \bar\bC_0^{-1}.$
By assumption, $\bA$ is rank deficient, and thus the left hand side is
rank deficient, whereas the right hand side is of full rank, which leads to a contradiction.
Thus we have $\dot \bC \neq \mathbf{0}$.
\end{proof}

The next result gives a lower-bound for the derivative $\frac{d}{d\alpha}\psi(\bar{\mathbf{x}}_\alpha,\mathbf{C}_\alpha)$.
\begin{theorem}\label{thm:penalty-derivative}
The functional $\psi(\bar{\mathbf{x}}_\alpha,\mathbf{C}_\alpha)$ satisfies
\begin{equation*}
  \frac{d}{d\alpha}\psi(\bar{\mathbf{x}}_\alpha,\mathbf{C}_\alpha)\geq\alpha ( \bC_0^{-1} \dot{{\bar{\mathbf x}}},\dot{\bar{\mathbf x}})  + \tfrac{1}{2}(\mathbf{C}^{-1}\dot{\mathbf{C}} \mathbf{C}^{-1},\dot\bC).
\end{equation*}
\end{theorem}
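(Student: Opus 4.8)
The plan is to differentiate $\psi$ along the optimal path $\alpha\mapsto(\bar{\mathbf{x}}_\alpha,\mathbf{C}_\alpha)$ by the chain rule and then feed in the sensitivity system \eqref{eqn:sensitivity}, exploiting the fact that the two gradient pieces of $\psi$ are \emph{exactly} the right-hand sides of that system. Concretely, since $\psi(\bar{\mathbf{x}},\mathbf{C})=-\tfrac12(\bar{\mathbf{x}}-\bm{\mu}_0)^t\bar\bC_0^{-1}(\bar{\mathbf{x}}-\bm{\mu}_0)-\tfrac12\mathrm{tr}(\bar\bC_0^{-1}\mathbf{C})$, the chain rule gives
\[
  \frac{d}{d\alpha}\psi=-\big(\bar\bC_0^{-1}(\bar{\mathbf{x}}-\bm{\mu}_0),\dot{\bar{\mathbf{x}}}\big)-\tfrac12\big(\bar\bC_0^{-1},\dot{\mathbf{C}}\big).
\]
The key observation is that $-\bar\bC_0^{-1}(\bar{\mathbf{x}}-\bm{\mu}_0)$ and $-\bar\bC_0^{-1}$ are precisely the right-hand sides of the first and second lines of \eqref{eqn:sensitivity}. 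First I would therefore replace each right-hand side by its left-hand side, pairing the first equation with $\dot{\bar{\mathbf{x}}}$ and the second (carrying the factor $\tfrac12$) with $\dot{\mathbf{C}}$, turning $\frac{d}{d\alpha}\psi$ into a sum of inner products of the sensitivity operators against $(\dot{\bar{\mathbf{x}}},\dot{\mathbf{C}})$.

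Next I would evaluate these inner products termwise, following the same trace/cyclic manipulations used in the proof of Theorem \ref{thm:sensitivity-sol}. Introducing the shorthands $\mathbf{f}=\mathbf{D}^{\frac12}\mathbf{A}\dot{\bar{\mathbf{x}}}\in\mathbb{R}^n$ and $\bar{\mathbf{e}}=\mathbf{D}^{\frac12}\mathrm{diag}(\mathbf{A}\dot{\mathbf{C}}\mathbf{A}^t)\in\mathbb{R}^n$, the diagonal pieces produce $\alpha(\bar\bC_0^{-1}\dot{\bar{\mathbf{x}}},\dot{\bar{\mathbf{x}}})$, the quadratic form $(\mathbf{A}^t\mathbf{D}\mathbf{A}\dot{\bar{\mathbf{x}}},\dot{\bar{\mathbf{x}}})=\|\mathbf{f}\|^2$, the target term $\tfrac12(\mathbf{C}^{-1}\dot{\mathbf{C}}\mathbf{C}^{-1},\dot{\mathbf{C}})$, and the pure-$\dot{\mathbf{C}}$ term $\tfrac14\|\bar{\mathbf{e}}\|^2$ (the last using the identity $(\mathbf{A}^t\mathbf{D}^{\frac12}\mathrm{diag}\,\mathrm{diag}(\mathbf{A}\dot{\mathbf{C}}\mathbf{A}^t)\mathbf{D}^{\frac12}\mathbf{A},\dot{\mathbf{C}})=\|\bar{\mathbf{e}}\|^2$ already established there). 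The two cross terms—one from $\tfrac12(\mathbf{A}^t\mathbf{D}\,\mathrm{diag}(\mathbf{A}\dot{\mathbf{C}}\mathbf{A}^t),\dot{\bar{\mathbf{x}}})$ and one from $\tfrac12(\mathbf{A}^t\mathbf{D}^{\frac12}\mathrm{diag}(\mathbf{A}\dot{\bar{\mathbf{x}}})\mathbf{D}^{\frac12}\mathbf{A},\dot{\mathbf{C}})$—each collapse, after a cyclic permutation of the trace and using that $\mathbf{D}$ is diagonal, to the same scalar $\tfrac12(\bar{\mathbf{e}},\mathbf{f})$.

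Collecting everything, I would arrive at the exact identity
\[
  \frac{d}{d\alpha}\psi=\alpha(\bar\bC_0^{-1}\dot{\bar{\mathbf{x}}},\dot{\bar{\mathbf{x}}})+\tfrac12(\mathbf{C}^{-1}\dot{\mathbf{C}}\mathbf{C}^{-1},\dot{\mathbf{C}})+\|\mathbf{f}\|^2+(\bar{\mathbf{e}},\mathbf{f})+\tfrac14\|\bar{\mathbf{e}}\|^2,
\]
and the last three terms assemble into the perfect square $\|\mathbf{f}+\tfrac12\bar{\mathbf{e}}\|^2\ge 0$, which drops out to yield the claimed lower bound (with $\bar\bC_0^{-1}$ playing the role of the $\bC_0^{-1}$ in the statement). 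The main obstacle is the middle bookkeeping step: one must verify that \emph{both} cross terms reduce to the \emph{common} quantity $(\bar{\mathbf{e}},\mathbf{f})$ and that the quadratic-in-$\dot{\mathbf{C}}$ term is exactly $\tfrac14\|\bar{\mathbf{e}}\|^2$, so that the cross and squared pieces combine into a single nonnegative square rather than leaving an indefinite remainder; this is the delicate part because of the nested $\mathrm{diag}$ operators and the $\mathbf{D}^{1/2}$ splitting, but it mirrors the computation already carried out in Theorem \ref{thm:sensitivity-sol}.
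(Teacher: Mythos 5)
Your proposal is correct and follows essentially the same route as the paper: differentiate $\psi$ by the chain rule, test the two sensitivity equations against $\dot{\bar{\mathbf{x}}}$ and $\tfrac12\dot{\mathbf{C}}$, sum, and identify the residual terms as $\|\mathbf{f}\|^2+(\bar{\mathbf{e}},\mathbf{f})+\tfrac14\|\bar{\mathbf{e}}\|^2$ with $\mathbf{f}=\mathbf{D}^{1/2}\mathbf{A}\dot{\bar{\mathbf{x}}}$ and $\bar{\mathbf{e}}=\mathbf{D}^{1/2}\mathrm{diag}(\mathbf{A}\dot{\mathbf{C}}\mathbf{A}^t)$. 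Your completion of the square $\|\mathbf{f}+\tfrac12\bar{\mathbf{e}}\|^2\ge0$ is exactly equivalent to the Cauchy--Schwarz (Young) bound the paper applies to the cross term, so the two arguments coincide.
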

\begin{proof}
By the definition of the functional $\psi$, we have
\begin{equation*}
  \frac{d}{d\alpha}\psi(\bar{\mathbf{x}}_\alpha,\mathbf{C}_\alpha)=- (\bar\bC_0^{-1}(\mathbf{\bar{x}}-\boldsymbol\mu_0),\dot{\bar{\mathbf x}})- \tfrac{1}{2}(\bar\bC_0^{-1},\dot\bC).
\end{equation*}
By taking inner product the first equation in \eqref{eqn:sensitivity} with $\dot{\bar{\mathbf{x}}}$,
and the second with $\frac{1}{2}\dot \bC$, we get
\begin{equation*}
  \begin{aligned}
     ((\mathbf{A}^t\mathbf{DA}+ \alpha \bC_0^{-1}) \dot{{\bar{\mathbf x}}},\dot{\bar{\mathbf x}}) + \tfrac{1}{2} (\mathbf{A}^t\mathbf{D}\mathrm{diag}(\mathbf{A}\dot{\mathbf{C}}\mathbf{A}^t),\dot{\bar{\mathbf x}}) &= - (\bar\bC_0^{-1}(\mathbf{\bar{x}}-\boldsymbol\mu_0),\dot{\bar{\mathbf x}}),\\
	\tfrac{1}{2}(\mathbf{C}^{-1}\dot{\mathbf{C}} \mathbf{C}^{-1}+\tfrac{1}{2}\mathbf{A}^t \mathbf{D}^\frac{1}{2}\mathrm{diag}\mathrm{diag}(\mathbf{A}\dot {\mathbf{C}}\mathbf{A}^t)\mathbf{D}^\frac{1}{2}\mathbf{A},\dot\bC) + \tfrac{1}{2}(\mathbf{A}^t\mathbf{D}^\frac{1}{2}\mathrm{diag}(\mathbf{A}\dot{\bar{ \mathbf x}})\mathbf{D}^\frac{1}{2}\mathbf{A},\dot\bC)  &= - \tfrac{1}{2}(\bar\bC_0^{-1},\dot\bC).
  \end{aligned}
\end{equation*}
By the cyclic property of trace and summing these two identities, we obtain
\begin{equation}\label{eqn:deriv-psi}
   \begin{aligned}
    - (\bar\bC_0^{-1} (\mathbf{\bar{x}}-\boldsymbol\mu_0),\dot{\bar{\mathbf x}})- \tfrac{1}{2}(\bar\bC_0^{-1},\dot\bC)
    =&  ( \alpha \bC_0^{-1}\dot{{\bar{\mathbf x}}},\dot{\bar{\mathbf x}})+\tfrac{1}{2}(\mathbf{C}^{-1}\dot{\mathbf{C}} \mathbf{C}^{-1},\dot\bC)\\
    & +(\mathbf{A}^t\mathbf{DA}\dot{{\bar{\mathbf x}}},\dot{\bar{\mathbf x}})  + \tfrac{1}{4}( \mathbf{D}\mathrm{diag}(\mathbf{A}\dot {\mathbf{C}}\mathbf{A}^t),\mathrm{diag}(\bA\dot\bC\bA^t))\\
    & + (\mathbf{D}^\frac{1}{2}\mathrm{diag}(\mathbf{A}\dot{\mathbf{C}}\mathbf{A}^t),\bD^\frac{1}{2}\mathbf{A}\dot{\bar{\mathbf x}}).
  \end{aligned}
\end{equation}
Meanwhile, by the Cauchy-Schwarz inequality, we have
\begin{equation*}
  (\mathbf{D}^\frac{1}{2}\mathrm{diag}(\mathbf{A}\dot{\mathbf{C}}\mathbf{A}^t),\bD^\frac{1}{2}\mathbf{A}\dot{\bar{\mathbf x}})\geq -(\bD^\frac{1}{2}\mathbf{A}\dot{\bar{\mathbf x}},\bD^\frac{1}{2}\mathbf{A}\dot{\bar{\mathbf x}}) - \tfrac{1}{4}(\mathbf{D}^\frac{1}{2}\mathrm{diag}(\mathbf{A}\dot{\mathbf{C}}\mathbf{A}^t),\mathbf{D}^\frac{1}{2}\mathrm{diag}(\mathbf{A}\dot{\mathbf{C}}\mathbf{A}^t)).
\end{equation*}
Substituting the preceding inequality into \eqref{eqn:deriv-psi} yields the desired estimate.
\end{proof}

\begin{corollary}\label{cor:monotone}
The functional $\psi(\bar{\mathbf{x}}_\alpha,\mathbf{C}_\alpha)$ is strictly increasing in $\alpha$.
\end{corollary}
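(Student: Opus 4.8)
The plan is to read off strict monotonicity directly from the lower bound established in Theorem \ref{thm:penalty-derivative}, namely
$$\frac{d}{d\alpha}\psi(\bar{\mathbf{x}}_\alpha,\mathbf{C}_\alpha)\geq\alpha(\bC_0^{-1}\dot{\bar{\mathbf x}},\dot{\bar{\mathbf x}}) + \tfrac{1}{2}(\mathbf{C}^{-1}\dot{\mathbf{C}}\mathbf{C}^{-1},\dot\bC),$$
where the derivatives $\dot{\bar{\mathbf x}}$ and $\dot\bC$ are well defined thanks to the unique solvability of the sensitivity system \eqref{eqn:sensitivity} (Theorem \ref{thm:sensitivity-sol}). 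Since Lemma \ref{lem:mon} already gives monotonicity, the only new task is to upgrade the inequality $\frac{d}{d\alpha}\psi\ge 0$ to a strict one; so it suffices to show that for every fixed $\alpha>0$ the right-hand side above is strictly positive.

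First I would verify that each of the two terms is nonnegative. Because $\alpha>0$ and $\bC_0^{-1}$ is symmetric positive definite, the first term satisfies $\alpha(\bC_0^{-1}\dot{\bar{\mathbf x}},\dot{\bar{\mathbf x}})\ge 0$, with equality if and only if $\dot{\bar{\mathbf x}}=\mathbf{0}$. For the second term I would introduce the symmetric square root $\bC^{1/2}$ and write $(\mathbf{C}^{-1}\dot{\mathbf{C}}\mathbf{C}^{-1},\dot\bC)=\mathrm{tr}(\mathbf{C}^{-1}\dot\bC\mathbf{C}^{-1}\dot\bC)=\mathrm{tr}(\mathbf{M}^2)=\|\mathbf{M}\|_F^2$, with $\mathbf{M}=\bC^{-1/2}\dot\bC\bC^{-1/2}$ symmetric. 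Hence this term is nonnegative and vanishes if and only if $\mathbf{M}=\mathbf{0}$, i.e. $\dot\bC=\mathbf{0}$. Consequently the entire right-hand side vanishes only in the degenerate case $\dot{\bar{\mathbf x}}=\mathbf{0}$ and $\dot\bC=\mathbf{0}$ simultaneously.

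It then remains to rule out this degenerate case, which I would do by contradiction using the sensitivity system \eqref{eqn:sensitivity}. Setting $\dot\bC=\mathbf{0}$ in its second equation kills every term linear in $\dot\bC$ and reduces it to $\mathbf{A}^t\mathbf{D}^{1/2}\mathrm{diag}(\mathbf{A}\dot{\bar{\mathbf x}})\mathbf{D}^{1/2}\mathbf{A}=-\bar\bC_0^{-1}$; imposing additionally $\dot{\bar{\mathbf x}}=\mathbf{0}$ makes the left-hand side vanish and forces $\bar\bC_0^{-1}=\mathbf{0}$, contradicting the invertibility of $\bar\bC_0\in\mathcal{S}_m^+$. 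Therefore $(\dot{\bar{\mathbf x}},\dot\bC)\ne(\mathbf{0},\mathbf{0})$, at least one of the two terms is strictly positive, and hence $\frac{d}{d\alpha}\psi(\bar{\mathbf{x}}_\alpha,\mathbf{C}_\alpha)>0$ for all $\alpha>0$, which gives the claimed strict monotonicity.

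The individual calculations here are all routine, and the only genuine content is the final contradiction step. I expect the main (and rather mild) obstacle to be checking that the simultaneous vanishing of the two derivatives is truly incompatible with \eqref{eqn:sensitivity}; this is exactly where the full rank of the prior precision $\bar\bC_0^{-1}$ enters, and it is precisely that ingredient which separates strict from nonstrict monotonicity.
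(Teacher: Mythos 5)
Your proposal is correct and follows essentially the same route as the paper: combine the lower bound of Theorem \ref{thm:penalty-derivative} with the fact that the uniquely solvable sensitivity system \eqref{eqn:sensitivity} has a nonvanishing right-hand side, so $(\dot{\bar{\mathbf{x}}},\dot{\mathbf{C}})\neq(\mathbf{0},\mathbf{0})$ and the derivative of $\psi$ is strictly positive. The only difference is that you spell out the positivity of each term and derive the contradiction explicitly from the second sensitivity equation, details the paper leaves implicit.
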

\begin{proof}
By Theorem \ref{thm:sensitivity-sol}, \eqref{eqn:sensitivity} is uniquely solvable. Since
the right hand side of \eqref{eqn:sensitivity} is nonvanishing (by assumption, $\mathbf{C}_0$ is nonzero), the solution
pair $(\dot{\bar{\mathbf{x}}},\dot{\mathbf{C}})$ to \eqref{eqn:sensitivity} is nonzero. Thus, by Theorem
\ref{thm:penalty-derivative}, $\frac{d}{d\alpha}\psi(\bar{\mathbf{x}}_\alpha,\mathbf{C}_\alpha)$
is strictly positive, i.e., $\psi(\bar{\mathbf{x}}_\alpha, \mathbf{C}_\alpha)$ is strictly increasing.
\end{proof}

\begin{remark}
For the standard regularized least-squares problem, the solution is distinct for different
$\alpha$, and it never vanishes (except the trivial case $\mathbf{y}=0$).  The proof in
Corollary \ref{cor:monotone} indicates that an analogous statement holds for the Poisson model \eqref{eqn:poisson}.
\end{remark}

\bibliographystyle{abbrv}
\bibliography{poisson}

\end{document}